\newtheorem{theorem}{Theorem}[section]
\theoremstyle{plain}
\newtheorem{claim}{Claim}
\newtheorem{lemma}[theorem]{Lemma}
\newtheorem{remark}{Remark}
\numberwithin{equation}{section}
\let\oldtocsection=\tocsection
\let\oldtocsubsection=\tocsubsection
\let\oldtocsubsubsection=\tocsubsubsection
\renewcommand{\tocsection}[2]{\hspace{0em}\oldtocsection{#1}{#2}}
\renewcommand{\tocsubsection}[2]{\hspace{2em}\oldtocsubsection{#1}{#2}}
\renewcommand{\tocsubsubsection}[2]{\hspace{4em}\oldtocsubsubsection{#1}{#2}}
\newtheorem{thm}{Theorem}
\newtheorem{cor}[thm]{Corollary}
\newtheorem{prop}[thm]{Proposition}
\theoremstyle{definition}
\newtheorem{defn}[thm]{Definition}
\newtheorem{ques}[thm]{Question}
\newtheorem{note}[thm]{Note}
\numberwithin{thm}{section}
\newcommand{\intr}{\text{int}}
\newcommand{\bdrysum}{\diamond}
\newcommand{\cl}{\text{cl}}
\newcommand{\mb}{\mathbb}
\newcommand{\Bd}{\text{Bd}}
\newcommand{\donut}{S^1 \times \mathbb{B}^3}
\begin{document}
\title[The Double $n$-Space Property for Contractible $n$-Manifolds]{The Double $n$-Space Property for Contractible $n$-Manifolds}
%\title[Contractible $n$-Manifolds and the Double $n$-Space Property]{Contractible $n$-Manifolds and the Double $n$-Space Property}
\author{Pete Sparks}
\address{Marquette University,
Milwaukee, Wisconsin 53201}
\email{peter.sparks@marquette.edu}
\thanks{Material in this article comes from the author's Ph.D thesis which was completed at the University of Wisconsin-Milwaukee, under the supervision of Craig Guilbault.}

\date{September 27, 2016}
\subjclass{Primary: 57N13; Secondary: 57N15}
\keywords{
Mazur manifold, Jester's manifold,dunce hat, Jester's hat, pseudo-handle, connected sum at infinity.}

\begin{abstract}
Motivated by a recent paper of Gabai \cite{Gab} on the Whitehead contractible 3-manifold, we investigate contractible manifolds $M^n$ which decompose or \emph{split} as $M^n = A \cup_C B$ where $A,B,C \approx \mathbb{R}^n$ or $A,B,C \approx \mathbb{B}^n$. Of particular interest to us is the case $n=4.$ Our main results exhibit large collections of $4$-manifolds that split in this manner.

\end{abstract}
\maketitle

\section{Introduction}

Our results will generally be in the topological category but because of the niceness of the spaces involved we are able to work in both the piecewise linear and smooth categories in our effort to obtain them. \cite{RoSa} is a good source for the piecewise linear theory we will employ. 
\begin{defn} We will write $A \cup_C B$ to indicate a union $A \cup B$ with intersection $C.$ We say a manifold $M^n$ \emph{splits} if $M^n = A \cup_C B$ with $A,B,$ and ${C \approx \mathbb{B}^n}$ or $A,B,$ and $C\approx \mathbb{R}^n$. In the former case we say $M$ ``splits into closed balls" or $M$ is a ``closed splitter" and write $M^n = \mathbb{B}^n \cup_{\mathbb{B}^n} \mathbb{B}^n.$ In the latter case we say $M$ ``splits into open balls" or $M$ is an ``open splitter" and write $M^n = \mathbb{R}^n \cup_{\mathbb{R}^n} \mathbb{R}^n.$
\end{defn}

We are interested in contractible manifolds $M^n$ which are open or closed splitters. We introduce a 4-manifold $M$ containing a spine, which we call a Jester's Hat, that can be written as $A \cup_C B$ with $A,B,$ and $C$ all collapsible. We'll show that this implies $M$ is a closed splitter. Using $M$ as a model we obtain a countably infinite collection of distinct 4-manifolds all of which are closed splitters. 

\begin{thm}
\label{infinite_closed_splitters}
There exists an infinite collection of topologically distinct splittable compact contractible 4-manifolds. The interiors of these are topologically distinct contractible splittable open 4-manifolds.
\end{thm}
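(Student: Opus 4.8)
The plan is to promote the single model Jester's Hat manifold $M$ to an infinite family $\{M_k\}_{k\ge 1}$, with $M_1 = M$, by performing a parametrized modification of the Jester's Hat spine that preserves the one feature we care about: a decomposition of the spine into two collapsible halves meeting in a collapsible intersection. Concretely, I would replace the identification used in building the Jester's Hat by one depending on an integer parameter $k$ --- for instance by varying the number of times a pseudo-handle is wound, in the spirit of the dunce-hat/Jester's-hat identifications --- and then check that the resulting $2$-complex still carries a splitting $A_k \cup_{C_k} B_k$ with $A_k, B_k,$ and $C_k$ all collapsible. Once this is arranged, the mechanism already established for the model, namely that a manifold carrying such a collapsible $A \cup_C B$ spine is a closed splitter, applies verbatim to each $M_k$, so every $M_k$ satisfies $M_k = \mathbb{B}^4 \cup_{\mathbb{B}^4} \mathbb{B}^4$. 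Each $M_k$ is a compact contractible $4$-manifold because it is a regular neighborhood of its spine, hence has the homotopy type of that (simply connected, collapsible-by-halves) complex.

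To see that the $M_k$ are pairwise topologically distinct, I would pass to their boundaries. A compact contractible $4$-manifold has boundary a homology $3$-sphere, and any homeomorphism $M_k \to M_\ell$ restricts to a homeomorphism $\partial M_k \to \partial M_\ell$, so it suffices to distinguish the closed $3$-manifolds $\partial M_k$. Since $H_1$ vanishes for a homology sphere, the abelianization carries no information, and the natural finer invariant is $\pi_1(\partial M_k)$ itself: I would read off a presentation of this group from the handle data determined by the spine of $M_k$ and then arrange the parameter $k$ so that these groups are pairwise non-isomorphic, for example by producing surjections onto an increasing family of distinct finite groups. This is the step I expect to be the main obstacle, as both verifying that the modified spines retain the collapsible splitting for every $k$ and extracting a genuinely distinguishing invariant from the boundary groups demand careful piecewise-linear bookkeeping rather than a formal argument.

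Finally, for the interiors I would argue that splitting and contractibility are inherited, and that distinctness is detected at infinity. Writing $M_k = A_k \cup_{C_k} B_k$ with each piece a closed $4$-ball, passing to interiors and using the collar on $\partial M_k$ exhibits $\intr(M_k)$ as a union of two open sets homeomorphic to $\mathbb{R}^4$ meeting in an open set homeomorphic to $\mathbb{R}^4$, so each $\intr(M_k)$ is an open splitter $\mathbb{R}^4 \cup_{\mathbb{R}^4} \mathbb{R}^4$ and is a contractible open $4$-manifold. To distinguish these open manifolds I would invoke the fundamental group at infinity: the interior of a compact manifold with boundary is collared by that boundary, so $\pi_1^{\infty}(\intr(M_k)) \cong \pi_1(\partial M_k)$, and the non-isomorphism of the boundary groups established above immediately yields that the interiors are topologically distinct as well.
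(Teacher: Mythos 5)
Your outline gets the splitting mechanism right, and it matches the paper: a compact $4$-manifold that collapses to a spine of the form $A \cup_C B$ with $A$, $B$, $C$ collapsible splits into closed $4$-balls by regular neighborhood theory, interiors of closed splitters are open splitters, and interiors of compact manifolds with non-homeomorphic boundaries (equivalently, with distinct fundamental group at infinity) are distinct. The genuine gap is the step you yourself flag as ``the main obstacle'': actually producing the infinite family and proving the boundary groups are pairwise non-isomorphic. Your plan --- vary an integer winding parameter in the identification and then read off presentations of $\pi_1(\partial M_k)$ from the handle data, hoping to separate them by finite quotients --- is precisely the approach the paper reports attempting and abandoning (``the group theoretic calculations proved too complicated''). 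Nothing in your proposal explains why the modified complexes still admit collapsible splittings for every $k$, why the resulting boundary groups differ, or even why an abstract $2$-complex determines a preferred $4$-manifold thickening: a regular neighborhood depends on an embedding, which is why the paper always builds the manifold first, as a (pseudo-)handle attachment to $S^1 \times \mathbb{B}^3$, and only then locates the spine.

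The paper's actual route avoids all boundary-group computations for varying attachments. It keeps the attaching curve $C$ and the Jester's hat spine completely fixed and instead attaches \emph{pseudo} $2$-handles: copies of $\mathbb{B}^4$ glued along tubular neighborhoods of $K_i$, the connected sum of $i$ trefoils knotted in $\partial \mathbb{B}^4$. Since each pseudo-handle still collapses to its core (the cone on $K_i$), every such manifold $W_i$ collapses to the same Jester's hat, so the splitting argument applies verbatim. Distinctness is then delegated to Wright's theorem, which yields an infinite subcollection of the $W_i$ with indecomposable, non-cyclic, pairwise non-isomorphic boundary fundamental groups --- and hence non-homeomorphic interiors --- \emph{provided} one verifies its hypothesis that $\mathrm{cl}(\partial M - h^{(2)}) = S^1 \times S^2 - \mathrm{int}(T)$ is irreducible with incompressible torus boundary. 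That verification is where the real work lies: the paper uses Mazur's representation of a quotient of $\pi_1(\partial M\!a^4)$ into hyperbolic isometries to show the meridian of $\partial T_\Gamma$ is nontrivial, lifts this along the double cover relating the Jester's curve complement to the Mazur curve complement, and then applies the Loop Theorem to rule out compressing discs. Without some substitute for this package (Wright's theorem plus the incompressibility verification), your proposal establishes splitting but not the infiniteness or distinctness claims, which are the substance of the theorem.
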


By combining the above examples with an infinite connected sum operation, we will then prove the following.

\begin{thm} There exists an uncountable collection of contractible open 4-manifolds which split as $\mathbb{R}^4 \cup_{\mathbb{R}^4} \mathbb{R}^4.$ 
\label{uncountable open 4-splitters}
\end{thm}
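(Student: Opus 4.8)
The plan is to produce uncountably many open splitters by taking infinite connected sums (connected sums at infinity) of the closed splitters furnished by Theorem~\ref{infinite_closed_splitters}, indexed by infinite binary sequences, and then to show two things: (i) each such infinite sum is again an open splitter, and (ii) distinct sequences yield non-homeomorphic open $4$-manifolds, so that the resulting family has cardinality of the continuum.

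\begin{proof}[Proof proposal]
First I would fix the countable collection $\{M_1, M_2, M_3, \dots\}$ of topologically distinct compact contractible splittable $4$-manifolds given by Theorem~\ref{infinite_closed_splitters}. For each infinite binary sequence $s \in \{0,1\}^{\mathbb N}$ I would form a contractible open $4$-manifold $W_s$ by performing an infinite connected sum at infinity of copies of the interiors $\mathring{M_i}$, using the sequence $s$ to dictate which summand is inserted at the $i$-th stage (for instance, inserting a copy of $M_2$ when $s_i=1$ and a copy of $M_1$ when $s_i=0$, or some similar rule). The connected sum at infinity of open $n$-manifolds is the natural operation of removing a properly embedded ray together with a tubular neighborhood from each summand and gluing the resulting $\mathbb{R}^{n-1}\times[0,\infty)$ boundary collars; since each summand is an open $n$-manifold that is one-ended and simply connected at infinity, the result is a well-defined contractible open $4$-manifold.

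The central step is to show that $W_s$ splits as $\mathbb{R}^4 \cup_{\mathbb{R}^4} \mathbb{R}^4$. Here I would exploit the fact that each $\mathring{M_i}$ is already an open splitter (the interiors of the closed splitters are open splitters, as recorded in Theorem~\ref{infinite_closed_splitters}). The idea is that the splitting $\mathring{M_i} = \mathbb{R}^4 \cup_{\mathbb{R}^4} \mathbb{R}^4$ can be arranged compatibly with the ray along which the connected sum at infinity is performed, so that the two open half-space pieces $A_i, B_i \approx \mathbb{R}^4$ and their intersection $C_i \approx \mathbb{R}^4$ can be threaded through the gluing tubes. Concretely, I would set up the splitting so that the separating $\mathbb{R}^4$ runs through all the connected-sum tubes, collect all the ``$A$'' pieces into one open ball and all the ``$B$'' pieces into another, and verify that the infinite union is still homeomorphic to $\mathbb{R}^4$. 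This will require an engulfing/telescoping argument: one must check that an infinite ascending union of copies of $\mathbb{R}^4$, glued end-to-end through standard tubes, is again $\mathbb{R}^4$, which is where I would appeal to the one-endedness and simple connectivity at infinity together with the fact that the relevant spaces are monotone unions of balls.

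The remaining step is to establish uncountably many homeomorphism types. Since all the $W_s$ are contractible, ordinary algebraic topology cannot distinguish them; instead I would use invariants of the end, the natural setting being the fundamental group at infinity (or the inverse sequence of fundamental groups of complements of compacta). The point is that the sequence of summands $M_1, M_2, \dots$ leaves a detectable ``fingerprint'' in the end structure of $W_s$: because the $M_i$ are topologically distinct compact contractible manifolds, their boundaries $\partial M_i$ (or equivalently the end-invariants they contribute) are distinct, and the way these are strung together along the connected sum at infinity encodes the sequence $s$ up to a controlled ambiguity. I would argue that if $W_s \approx W_t$ then the end invariants force $s$ and $t$ to agree except possibly under some finite or easily-bounded equivalence, so that each homeomorphism type accounts for only countably many sequences; since $\{0,1\}^{\mathbb N}$ is uncountable, this yields uncountably many distinct homeomorphism types.

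The main obstacle I anticipate is the distinguishing step: proving that the infinite connected sum at infinity genuinely records the sequence $s$ in a homeomorphism-invariant way. The difficulty is that connected sum at infinity can in principle lose information (summands might ``cancel'' or reorder), so I would need a robust end invariant---most plausibly the pro-homotopy or pro-isomorphism type of the inverse system of fundamental groups of neighborhoods of infinity---that is sensitive to the multiset of summands appearing cofinally, and then a rigidity argument showing that a homeomorphism of the total spaces induces a pro-isomorphism compatible enough with the summand structure to pin down $s$. Verifying that this invariant separates uncountably many of the $W_s$, while only grouping countably many sequences together, is the crux of the argument.
\end{proof}
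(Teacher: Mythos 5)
Your overall strategy is the paper's: form infinite connected sums at infinity of interiors of the compact splitters from Theorem \ref{infinite_closed_splitters}, show such a sum splits by threading the splitting through the gluing tubes (with ascending unions of $\mathbb{R}^4$'s handled by Brown's monotone union theorem), and distinguish the results by pro-$\pi_1$ at infinity. However, there is a genuine gap in your uncountability step, and it lies in your indexing scheme. A CSI does not remember the \emph{order} of its summands (the paper notes this, citing \cite{CKS}), so the only fingerprint your end invariant can detect is the multiset of end-groups contributed by the summands. If $s\in\{0,1\}^{\mathbb{N}}$ dictates inserting $M_1$ or $M_2$ at stage $i$, then every sequence with infinitely many $0$'s and infinitely many $1$'s produces the same multiset: infinitely many copies of $\pi_1(\partial M_1)$ and infinitely many copies of $\pi_1(\partial M_2)$. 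The paper's separation criterion (Theorems \ref{uncountable CSI's} and \ref{Craig's no.2}: some group appears \emph{more times} in one list than in the other) says nothing about two such sequences, and indeed all such $W_s$ should be homeomorphic to one another. So your family collapses to countably many detectable types, and your counting argument fails in the opposite direction: a single homeomorphism type absorbs uncountably many sequences. The fix, which is what the paper does, is to use the entire countable family $\{M_i\}_{i=1}^{\infty}$ and index sums by (say) infinite subsets $S\subseteq\mathbb{Z}_+$; two distinct subsets differ in some $M_i$ appearing in one sum but not the other, and then the Curtis--Kwun-type result (Theorem \ref{CuKw type}, proved via the Kurosh subgroup theorem) shows the pro-$\pi_1$'s at infinity are not pro-isomorphic.

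Two further points need repair. First, your distinguishing step needs more than ``the $M_i$ are topologically distinct'': Theorem \ref{infinite_closed_splitters} alone does not supply what the free-product bookkeeping requires, namely that the groups $\pi_1(\partial M_i)$ are pairwise non-isomorphic, \emph{indecomposable}, and not infinite cyclic --- that is the content of Theorem \ref{infinite 4-splitters}. Without indecomposability, factors can fragment inside the free products $\pi_1(U_{1,j})\ast\cdots\ast\pi_1(U_{j,j})$ and the multiset of summands is not recoverable from the pro-isomorphism class. Second, your justification of well-definedness asserts the summands are simply connected at infinity; they are not (if they were, the end invariant would be trivial and nothing could be distinguished --- this contradicts your own final step). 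What is actually needed is one-endedness and (semi)stability, which holds because each summand is the interior of a compact manifold with connected boundary; semistability is what makes the CSI independent of the choice of rays (Lemma \ref{indep of rays}). Relatedly, the phrase ``arranged compatibly with the ray'' conceals the real work of the splitting step: one must produce a ray lying in $C_i$ that is proper not just in $C_i$ but in \emph{both} $A_i$ and $B_i$ (the paper proves this with a Sard/transversality argument); without that properness the pieces $A$, $B$, $C$ of the glued-up sum need not be open half-space complements, and the identification of each with $\mathbb{R}^4$ breaks down.
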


Our motivation comes from David Gabai's result that the Whitehead 3-manifold, $Wh^3,$ splits into open 3-balls
\[Wh^3=  \mathbb{R}^3 \cup_{\mathbb{R}^3} \mathbb{R}^3 \ \ \text{ \cite{Gab}}.\]
Other terminology in use which is synonymous with open splitting includes \textit{double $n$-space property} and \textit{Gabai splitting}. Garity, Repov$\check{\text{s}}$, and Wright have recently discovered uncountable collections of both 3-dimensional contractible open splitters and 3-dimensional contractible nonsplitters (see Theorems \ref{uncountable open 3-splitters} and \ref{uncountable open 4-nonsplitters} below)\cite{GRW}. 

\section{Background and History}
\subsection{Elementary Results}
It is clear that the unit ball $\mathbb{B}^n$ splits into two ``subballs" overlapping in a $n$-ball. Likewise, Euclidean space splits into two Euclidean spaces meeting in a Euclidean space. More generally, we have the following (which was assumed without proof in \cite{Gla65}). A proof can be found in \cite[Prop ~1.2.1]{Spa}.

\begin{prop} If $M^n$ splits as ${M^n = \mathbb{B}^n \cup_{\mathbb{B}^n} \mathbb{B}^n}$ then %\emph{int}$M^n$
$\intr M^n$ splits as $\intr M^n = \mathbb{R}^n \cup_{\mathbb{R}^n} \mathbb{R}^n$.
\label{ints of splitters split}
\end{prop}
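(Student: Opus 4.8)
The plan is to take the three open pieces to be the manifold \emph{interiors} of the closed balls in the given decomposition. Write $M^n = A \cup_C B$ with $A,B,C \approx \mathbb{B}^n$, and set $A' = \intr A$, $B' = \intr B$, and $C' = \intr C$. Since a homeomorphism of manifolds-with-boundary carries interior to interior, each of $A',B',C'$ is homeomorphic to $\intr \mathbb{B}^n = \mathbb{R}^n$, so it suffices to prove the two set-theoretic identities $A' \cup B' = \intr M$ and $A' \cap B' = C'$; granting these, $\intr M$ is exhibited as $\mathbb{R}^n \cup_{\mathbb{R}^n} \mathbb{R}^n$. I would first record two elementary facts used throughout: each $\intr X$ is open in $M$ and contained in $\intr M$ (a point with a Euclidean neighborhood in $X \subseteq M$ has one in $M$, and by invariance of domain that neighborhood is open in $M$), and if $X \subseteq Y$ is an inclusion of codimension-zero balls then $\intr X \subseteq \intr Y$.

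The intersection identity is the routine half. From $A' \subseteq A$ and $B' \subseteq B$ we get $A' \cap B' \subseteq A \cap B = C$, and $A' \cap B'$ is open in $M$ and contained in $\intr M$. A point of $C$ with a Euclidean neighborhood in $M$ that lies in $C$ and lies in $\intr M$ is an interior point of $C$, so $A' \cap B' \subseteq \intr C = C'$. Conversely $C \subseteq A$ and $C \subseteq B$ give $\intr C \subseteq \intr A$ and $\intr C \subseteq \intr B$ by the second fact, whence $C' \subseteq A' \cap B'$. This gives $A' \cap B' = C'$.

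The union identity is the heart of the matter, and I expect it to be the main obstacle. The inclusion $A' \cup B' \subseteq \intr M$ is immediate. For the reverse, take $p \in \intr M$; since $M = A \cup B$ we may assume $p \in A$. If $p$ lies in the topological interior of $A$ in $M$ then $p \in \intr A = A'$, so suppose $p$ lies on the frontier of $A$ in $M$. As $M \setminus A \subseteq B$ and $B$ is closed, $p$ is a limit of points of $B$, hence $p \in A \cap B = C$; the crux is to promote this to $p \in \intr B = B'$. This is exactly where the hypothesis that the overlap $C$ is codimension zero, a genuine $n$-ball rather than a separating wall, must be used: the thickness of $C$ forces the frontier of $A$ inside $\intr M$ to sit in the manifold interior of $B$. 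Concretely, I would produce a neighborhood of $p$ in $\intr M$ lying in $B$ from a collar of the separating wall supplied by $C$, which in the piecewise linear or smooth configurations at hand is available and simultaneously rules out $p \in \partial B$. It is worth stressing why codimension zero is essential: for a codimension-one gluing, where $A$ and $B$ meet along a common wall $\partial A \cap \partial B$, the two ``stopping walls'' coincide and $\intr A \cup \intr B$ omits that wall, missing points of $\intr M$; the full-dimensional overlap separates these walls and it is precisely this separation that delivers the union identity. Once the frontier step is in place the proof closes at once, and $\intr A, \intr B, \intr C$ realize $\intr M$ as an open splitter.
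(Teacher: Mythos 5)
Your choice of decomposition is the right one---take $A'=\intr A$, $B'=\intr B$, $C'=\intr C$---and the routine parts of your argument are correct: both invariance-of-domain facts are sound, the intersection identity $A'\cap B'=C'$ is proved completely, and you correctly isolate the union identity $\intr M\subseteq \intr A\cup\intr B$ as the heart of the matter. (For comparison, the paper itself prints no argument, deferring to \cite[Prop.~1.2.1]{Spa}.) The genuine gap is that you never prove that crux. After reducing to a point $p\in C\cap \intr M$ lying on the frontier $\Bd_M A$, your proposed step is to extract a neighborhood of $p$ inside $B$ ``from a collar of the separating wall supplied by $C$,'' available ``in the piecewise linear or smooth configurations at hand.'' But the proposition carries no PL, smooth, or tameness hypothesis: the splitting is only a topological decomposition, so the pieces may sit wildly in $M$ (Alexander-horned-sphere phenomena), and an ambient collar or bicollar of the frontier need not exist a priori---bicollaredness of a frontier is precisely the sort of statement that fails for wild embeddings. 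Even granting tameness, the sentence contains no mechanism. What must be excluded is that $p$ lies simultaneously on the manifold boundaries $\partial A$ and $\partial B$, i.e.\ that $p$ is a limit both of $M\setminus B=A\setminus C$ and of $M\setminus A=B\setminus C$; a collar intrinsic to one piece does not rule this out, and a collar of the wall does not help unless you already know its $A$-side lies in $C$ near $p$, which is exactly the ``thickness'' claim to be proved. Note that in the codimension-one gluing you yourself mention, all these collars exist and the conclusion is false, so no argument built from collars alone can succeed; the $n$-dimensionality of $C$ must enter in an essential, algebraic way.

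Here is a way to close the gap that works in the topological category. Suppose $p\in\intr M$ but $p\notin\intr A\cup\intr B$. Since $A$ and $B$ are closed, $p\in A\cap B=C$, and then $p\in\partial A\cap\partial B\cap\partial C$ (if $p$ were in $\intr C$ it would be in $\intr A$ by your second fact). Now apply the relative Mayer--Vietoris sequence to the closed triad $M=A\cup B$, $A\cap B=C$, punctured at $p$; this is legitimate because all four spaces are compact ANRs and $C\hookrightarrow A$ is a closed cofibration, so the triad is excisive (in the PL setting one may instead run the same computation on the link of $p$). Local homology vanishes at manifold-boundary points, so $H_n(A,A\setminus p)=H_n(B,B\setminus p)=H_{n-1}(C,C\setminus p)=0$, whereas $H_n(M,M\setminus p)\cong\mathbb{Z}$ because $p\in\intr M$. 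Exactness of
\[
H_n(A,A\setminus p)\oplus H_n(B,B\setminus p)\longrightarrow H_n(M,M\setminus p)\longrightarrow H_{n-1}(C,C\setminus p)
\]
forces $\mathbb{Z}=0$, a contradiction. Hence $\intr M=\intr A\cup\intr B$, and together with your intersection identity this exhibits $\intr M=\intr A\cup_{\intr C}\intr B$ as the desired splitting $\mathbb{R}^n\cup_{\mathbb{R}^n}\mathbb{R}^n$.
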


\subsection{History and Current Work}

Some classical knowledge about manifold splitting was provided by Glaser.%is contained in the following theorem \cite{Gla65}, \cite{Gla66}.%, \cite{Poe}.
\begin{thm}%\emph{(Glaser, Poenaru)}\\
%\emph{(Glaser)}
(a) For each $n\geq 4$ there exists a compact contractible PL $n$-manifold with boundary $W^n$ not homeomorphic to $\mathbb{B}^n$ such that ${W^n \approx \mathbb{B}^n \cup_{\mathbb{B}^n} \mathbb{B}^n.}$

(b) For each $n \ge 3$ there exist an open contractible $n$-manifold $O^n$ not homeomorphic to $\mathbb{R}^n$ such that $O^n \approx \mathbb{R}^n \cup_{\mathbb{R}^n} \mathbb{R}^n.$

\end{thm}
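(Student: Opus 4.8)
The plan is to reduce the splitting of a manifold into balls to the splitting of a spine into \emph{collapsible} pieces, using regular neighborhood theory from \cite{RoSa}. The key tool is the classical fact that a regular neighborhood of a collapsible polyhedron lying in the interior of a PL manifold is a PL ball. Thus if a compact PL manifold $W^n$ is realized as a regular neighborhood of a $2$-complex $K$, and if $K$ can be written as $K = K_A \cup_{K_C} K_B$ with $K_C = K_A \cap K_B$ and each of $K_A, K_B, K_C$ collapsible, then passing to compatibly chosen regular neighborhoods $A = N(K_A)$, $B = N(K_B)$, $C = N(K_C)$ (so that $A \cup B = W^n$ and $A \cap B = C \subset \partial A \cap \partial B$) exhibits $W^n = \mathbb{B}^n \cup_{\mathbb{B}^n} \mathbb{B}^n$. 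This is precisely the mechanism underlying the paper's own Jester's Hat, so I would set up Glaser's examples in the same fashion.

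For part (a) I would take $W^n$ (for $n \ge 4$) to be a Mazur-type manifold: a handlebody with a single $0$-handle, $1$-handle, and $2$-handle, the $2$-handle attached along a curve that is homotopically trivial (making $W^n$ contractible) but embedded so that $\pi_1(\partial W^n) \ne 1$. Viewing $W^n$ as the regular neighborhood of a $2$-dimensional spine $K$, the crux is to produce a decomposition $K = K_A \cup_{K_C} K_B$ into collapsible subpolyhedra. Note that $K$ itself is \emph{not} collapsible (otherwise $W^n$ would already be a ball), so the collapsibility must be engineered to appear only after the cut. Granting such a decomposition, the regular neighborhood lemma yields the ball splitting. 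Finally $W^n \not\approx \mathbb{B}^n$ because $\partial \mathbb{B}^n = S^{n-1}$ is simply connected for $n \ge 3$, whereas $\pi_1(\partial W^n) \ne 1$.

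For part (b) in the range $n \ge 4$ I would simply set $O^n = \intr W^n$ and invoke Proposition~\ref{ints of splitters split} to obtain $O^n = \mathbb{R}^n \cup_{\mathbb{R}^n} \mathbb{R}^n$; that $O^n \not\approx \mathbb{R}^n$ follows because the end of $\intr W^n$ is a product $\partial W^n \times [0,1)$, so $O^n$ is not simply connected at infinity while $\mathbb{R}^n$ is (its end is $S^{n-1} \times [0,\infty)$). The case $n = 3$ cannot be reduced to a compact example, since every compact contractible $3$-manifold is a ball. Here I would instead build a suitable open contractible $3$-manifold $O^3 \not\approx \mathbb{R}^3$ directly (e.g.\ an increasing union of solid tori nested by null-homotopic but essential embeddings) together with an explicit exhaustion by balls realizing a splitting $O^3 = A \cup_C B$ with $A,B,C \approx \mathbb{R}^3$. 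The three copies of $\mathbb{R}^3$ cannot be read off from a ball-splitting of any single solid torus (a solid torus does not split into balls); they must be assembled as ascending unions of balls that are compatible across the whole tower, and the construction must be chosen so this compatibility is visible directly.

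The main obstacle lives in part (a): exhibiting an explicit non-collapsible contractible $2$-spine together with a decomposition into two collapsible halves meeting in a collapsible seam. Everything else is bookkeeping with regular neighborhoods and standard end invariants. The $n = 3$ case of part (b) is delicate for a different reason — one must arrange the ball-exhaustions to be mutually compatible so the splitting survives to the limit — and this is exactly the kind of difficulty that, for the Whitehead manifold, required the far deeper analysis of \cite{Gab}.
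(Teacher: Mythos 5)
Your reduction---decompose a spine as $K_A \cup_{K_C} K_B$ with all three pieces collapsible, then pass to regular neighborhoods taken in a common triangulation---is the correct mechanism (it is exactly the proposition the paper proves in its section on spines, and it underlies Glaser's constructions), and your part (b) for $n \ge 4$ (take interiors, invoke Proposition \ref{ints of splitters split}, distinguish from $\mathbb{R}^n$ by the fundamental group at infinity) is fine. But the proposal has a genuine gap, and it sits precisely where you say the ``main obstacle'' lives: you never produce the spine, and the candidate you do name would not work. A Mazur-type manifold has the \emph{dunce hat} as its natural spine, and whether the dunce hat can be written as $A \cup_C B$ with $A,B,C \searrow 0$ is Question \ref{D split?} of this paper---explicitly open; equivalently, whether $M\!a^4$ splits into closed balls is Question \ref{Ma Split?}, also open. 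Moreover your two requirements, ``the spine splits into collapsibles'' and ``$\pi_1(\partial W^n) \neq 1$,'' are in real tension: when the paper engineers a spine that \emph{does} split (the Jester's hat), it is then unable to show directly that the resulting Jester's manifolds are not balls, and must pass to Wright's pseudo-handle construction and Theorem \ref{Wright} to get non-simply-connected boundaries (Theorem \ref{infinite 4-splitters}). So ``granting such a decomposition'' concedes the entire content of part (a). The paper itself does not reprove the statement by this route; it cites Glaser, who constructs explicit contractible $(n-2)$-complexes PL-embedded in $S^n$ whose regular neighborhoods are non-balls that split (\cite{Gla65} for $n \ge 5$, \cite{Gla66} for $n=4$). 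Without those complexes, or the Jester's-hat-plus-Wright argument, part (a) is unproved.

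The $n=3$ case of part (b) is likewise only gestured at: you propose a nested-solid-torus (Whitehead-type) manifold and then concede that extracting a splitting from such a tower ``required the far deeper analysis of \cite{Gab}''---which is to say you have not given an argument. Two legitimate ways to close this: quote Gabai's theorem that $Wh^3 = \mathbb{R}^3 \cup_{\mathbb{R}^3} \mathbb{R}^3$ (and $Wh^3 \not\approx \mathbb{R}^3$), or quote Glaser's own example, the complement of a suitably embedded double Fox--Artin arc in $S^3$ \cite{Gla66}, which is what the paper does. One small further correction: your parenthetical ``$A \cap B = C \subset \partial A \cap \partial B$'' is dimensionally impossible---in a splitting $W^n = A \cup_C B$ the intersection $C$ is an $n$-ball (codimension $0$), not a subset of the $(n-1)$-dimensional boundaries; what is true is that $N_A \cap N_B$ is a regular neighborhood of $K_C$ and hence itself an $n$-ball.
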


For the compact case, Glaser shows the existence of a contractible $(n-2)$-complex piecewise linearly embedded in $S^n$ which has a non-ball regular neighborhood which splits. The $n\geq5$ case was shown in \cite{Gla65} and the $n=4$ case was shown in \cite{Gla66}.

For the noncompact $n \ge 4$ case he takes the interiors of the compact splitters found in (a). For the noncompact $n=3$ case, Glaser shows that the complement of a certain embedding of a double Fox-Artin arc in $S^3$ splits and is not a (open) ball \cite{Gla66}. 

In \cite{Gab}, Gabai asks
\begin{ques}
Is there a reasonable characterization of open contractible 3-manifolds that are the union of two embedded submanifolds each homeomorphic to $\mathbb{R}^3$ and that intersect in a $\mathbb{R}^3$?
\end{ques}
Renewed interest in this topic, motivated by Gabai's splitting of the Whitehead manifold and the resulting above question, has led to the following recent results.% \cite{GRW}.

\begin{thm}(see \cite{GRW}) %\emph{(Garity, Repovs, Wright)} 
There exist uncountably many distinct contractible 3-manifolds that are open splitters.
\label{uncountable open 3-splitters}
\end{thm}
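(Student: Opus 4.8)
The plan is to build an uncountable family of Whitehead-type contractible open $3$-manifolds, each of which admits a Gabai-style splitting, and then to separate uncountably many of them up to homeomorphism by a rigidity invariant. Recall that the Whitehead manifold arises as an ascending union $Wh^3 = \bigcup_i T_i$ of solid tori, where each $T_i$ is embedded in $T_{i+1}$ as a null-homotopic, clasped, unknotted curve (the Whitehead pattern). The first thing I would do is isolate the mechanism behind Gabai's splitting: his decomposition $Wh^3 = \mathbb{R}^3 \cup_{\mathbb{R}^3} \mathbb{R}^3$ is assembled stage by stage from a splitting of each ``shell'' $T_{i+1} \setminus \mathrm{int}\, T_i$ that is compatible across consecutive stages. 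I would verify that this splitting depends only on the local embedding pattern of $T_i$ in $T_{i+1}$, so that any defining sequence built from patterns admitting a compatible shell-splitting yields an open splitter.

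Second, I would assemble the uncountable family. Fix a finite menu of admissible patterns --- concretely, at least two distinct Whitehead-type clasp patterns $P_0, P_1$, each admitting the compatible shell-splitting above. For each infinite binary sequence $s = (s_1, s_2, \ldots) \in \{0,1\}^{\mathbb{N}}$, define $M_s = \bigcup_i T_i$ where $T_i$ is embedded in $T_{i+1}$ via pattern $P_{s_i}$. Each $M_s$ is an open contractible $3$-manifold (contractibility follows from the patterns being null-homotopic, so that $\pi_1(M_s) = \varinjlim \pi_1(T_i)$ vanishes and the standard monotone-union argument applies), and by the first step each $M_s$ splits as $\mathbb{R}^3 \cup_{\mathbb{R}^3} \mathbb{R}^3$. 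This produces continuum-many candidates at once.

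The genuine difficulty --- and the heart of the argument --- is to show that uncountably many of the $M_s$ are pairwise non-homeomorphic. Having $2^{\aleph_0}$ defining sequences is not enough by itself, since re-indexing, reparametrizing, or absorbing stages can identify manifolds coming from superficially different sequences. To defeat this, I would want the patterns $P_0, P_1$ to be \emph{rigid}, in the sense that the defining sequence is essentially recoverable from the homeomorphism type of the end of $M_s$; this is exactly the kind of rigidity that Garity, Repov\v{s}, and Wright establish for genus-one Whitehead constructions. Concretely, I would attach to the end of $M_s$ a sequence of intrinsic topological invariants (for example, the winding and clasping data read off at each genus-one stage, or the isotopy type of the intersections of a canonical defining sequence) and show that these reconstruct $s$ up to a controlled equivalence. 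A direct counting or Baire-category argument would then extract an uncountable pairwise non-homeomorphic subfamily.

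I expect the rigidity step to be the main obstacle, because it pulls against the splitting requirement: the patterns must be flexible enough to admit Gabai's shell-splitting, yet rigid enough that distinct sequences yield distinct manifolds, and reconciling these two demands within a single family of patterns is the crux. In particular, one must rule out ``hidden'' homeomorphisms that ignore the given defining tori, which requires showing that the clasping/winding data of the chosen defining sequence is a canonical feature of the manifold rather than an artifact of the construction --- precisely the point where the rigidity theory for genus-one Whitehead continua does the essential work.
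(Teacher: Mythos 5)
The first thing to note is that the paper does not prove this statement at all: Theorem \ref{uncountable open 3-splitters} is quoted, with attribution, from Garity--Repov\v{s}--Wright \cite{GRW}, and is used only as background motivation for the 4-dimensional results that are the paper's actual content. So there is no internal proof to compare your attempt against; the relevant comparison is with the argument in \cite{GRW} itself.

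Your outline does capture the broad shape of that argument --- a family of Whitehead-type manifolds indexed by infinite sequences of embedding patterns, a stage-by-stage splitting compatible with the defining sequence, and a rigidity statement showing the manifold determines the sequence --- but as a proof it has two genuine gaps. First, the assertion that Gabai's splitting ``depends only on the local embedding pattern'' and therefore propagates across any admissible defining sequence is precisely the technical heart of the matter, not something one can verify in passing: Gabai's decomposition of $Wh^3$ is not obviously a shell-by-shell construction, and exhibiting explicit patterns $P_0, P_1$ for which compatible shell splittings exist is where the real work lies. Second, and more seriously, your distinctness step is circular as written: you invoke ``exactly the kind of rigidity that Garity, Repov\v{s}, and Wright establish,'' which is the content of the very theorem you are trying to prove. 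A non-circular argument would need either to prove rigidity for your chosen patterns directly, or to appeal to distinctness techniques that predate \cite{GRW} (for instance McMillan-style arguments distinguishing genus-one Whitehead-type manifolds via their defining sequences), and then to verify that those techniques apply to patterns which simultaneously admit the splitting. Reconciling the flexibility needed for splitting with the rigidity needed for distinctness is, as you yourself observe, the crux --- and your sketch leaves it entirely unresolved.
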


%\textbf{Theorem.} 
\begin{thm} (see \cite{GRW}) %\emph{(Garity, Repovs, Wright)}
There are uncountably many distinct contractible 3-manifolds that are not open splitters.% There exist uncountably many contractible (open) 3-manifolds that do not split as $\mathbb{R}^3 \cup_{\mathbb{R}^3} \mathbb{R}^3$.
\label{uncountable open 4-nonsplitters}
\end{thm}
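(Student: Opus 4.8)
The plan is to exhibit an uncountable family $\{M_s\}$ of contractible open $3$-manifolds, each realized as the complement of a continuum in $S^3$ (equivalently, as an ascending union governed by a defining sequence of nested solid tori $T_0 \supset T_1 \supset \cdots$), and then to establish two things: that the $M_s$ are pairwise non-homeomorphic, and that none of them is an open splitter. For the construction I would use Whitehead/Bing-type defining sequences in which each $T_{i+1}$ sits inside $T_i$ with a prescribed winding pattern, but allow the pattern at each stage to be selected from a fixed finite menu of inequivalent embeddings (for instance patterns of differing genus, or branched versus linearly-nested arrangements). Encoding an element $s \in 2^{\mathbb{N}}$ into these stagewise choices produces the uncountable family, all of whose members are contractible open $3$-manifolds.

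For distinctness I would appeal to an invariant of the end that survives homeomorphism. The natural candidate is the genus/winding data of the defining sequence, packaged as a topological invariant of the single end of $M_s$ (in the spirit of the local-genus invariants for rigid Cantor sets and Whitehead-type compacta). Since a homeomorphism of the manifolds induces an equivalence of their defining sequences up to the standard moves, and these moves are too coarse to identify sequences with genuinely different pattern data, one can separate out uncountably many homeomorphism classes. This part follows the by-now-standard template for producing uncountably many distinct Whitehead-type manifolds and I expect it to be routine once the invariant is fixed.

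The crux is showing that no $M_s$ splits. I would argue by contradiction. A splitting $M_s = \mathbb{R}^3 \cup_{\mathbb{R}^3} \mathbb{R}^3$ supplies a properly embedded, two-sided, separating copy $C \approx \mathbb{R}^3$. Using a compact exhaustion and the single-endedness of $M_s$, I would study how $C$ meets the defining tori $T_i$: after an innermost-disk and incompressibility cleanup, the intersections $C \cap \partial T_i$ reduce to controlled curve systems, and the geometric index (à la Schubert/Edwards) measuring how $C$ winds through each stage must be compatible across the whole tower. Gabai's construction shows such a separating $\mathbb{R}^3$ does exist for the linearly-nested genus-one Whitehead tower; the point is that this compatibility forces a quasi-periodic, ``axial'' structure on the defining sequence. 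By choosing the patterns in $M_s$ to be rigid and branched (so that no stage admits the longitudinal axis the splitting would require), one derives a contradiction: realizing $C$ would demand a reduction of geometric index or genus that the chosen pattern forbids at infinitely many stages.

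I expect this last step to be the main obstacle. The delicate work is quantitative: one must prove that a hypothetical separating $\mathbb{R}^3$ cannot intersect each $T_i$ trivially (incompressibility), and then that its unavoidable nontrivial intersections propagate an index/genus constraint all the way down the tower, contradicting the rigidity built into $\{M_s\}$. Making the geometric-index bookkeeping uniform over the entire defining sequence, and ensuring the chosen menu of patterns is simultaneously rich enough to give uncountably many classes yet rigid enough to block every splitting, is where the real difficulty lies.
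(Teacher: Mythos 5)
You should first note that the paper contains no proof of this statement: it is imported verbatim from Garity--Repov\v{s}--Wright \cite{GRW} (as is Theorem \ref{uncountable open 3-splitters}), so there is no internal argument to compare against. Judged on its own terms, your outline does follow the same broad architecture as the published proof in \cite{GRW} --- defining sequences of nested solid tori built from generalized Whitehead-link patterns, an element of $2^{\mathbb{N}}$ encoded in the stagewise choices, and geometric-index/wrapping-number invariants used both to distinguish the manifolds and to obstruct splittings. But the proposal has a genuine gap at the decisive step, together with a conceptual error in how that step is set up.

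The conceptual error: in a splitting $M = A \cup_C B$ with $A,B,C \approx \mathbb{R}^3$, all three pieces are codimension-zero \emph{open} subsets of $M$ (invariance of domain). The intersection $C = A \cap B$ is not a properly embedded, two-sided, separating copy of $\mathbb{R}^3$, and $C \cap \partial T_i$ is not a curve system --- it is a $3$-dimensional set. There is no embedded surface available on which to run innermost-disk and incompressibility arguments: the frontiers of $A$, $B$, $C$ in $M$ need not be embedded surfaces, need not be locally flat, and in Gabai's own splitting of the Whitehead manifold the pieces are wildly embedded. So an argument that begins ``a splitting supplies a separating hypersurface; make it transverse to the tower and cut and paste'' does not get off the ground; this is precisely why the non-splitting direction is hard, and why \cite{GRW} extend the wrapping-number machinery so that it applies to arbitrary open subsets homeomorphic to $\mathbb{R}^3$ rather than to surfaces. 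Beyond this, the actual contradiction --- that rigid, branched patterns force an index or genus constraint down the tower that no splitting can satisfy --- is asserted rather than proved, as you yourself concede (``where the real difficulty lies''). Since that assertion is the entire content of the theorem, what you have is a plausible research plan consistent with the known proof strategy, not a proof.
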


\begin{note} In dimension 3, the Poincar$\acute{\mbox{e}}$ conjecture gives that every compact contractible manifold is homeomorphic to $\mathbb{B}^3$ so the question of closed splitters in this case is uninteresting.
\end{note}

Earlier work by Ancel-Guilbault \cite{AG95} and more recently by Ancel-Guilbault-Sparks \cite{AGS} provides a great deal of information about splitters in dimensions greater than or equal to 5.

\begin{thm} %\emph{(Ancel and Guilbault)} 
If $C^n$ $(n \geq 5)$ is a compact, contractible $n$-manifold then $C^n$ splits as $\mathbb{B}^n \cup_{\mathbb{B}^n} \mathbb{B}^n.$ 
\end{thm}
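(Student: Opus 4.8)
The plan is to start from the well-known handle-theoretic description of compact contractible manifolds in high dimensions. Since $C^n$ is a compact contractible $n$-manifold with $n \geq 5$, it admits a handle decomposition. Using the fact that $C^n$ is contractible together with handle cancellation (Whitehead's trick / handle trading, available because the Whitney trick works when the relevant middle-dimensional spheres live in manifolds of dimension $\geq 5$), I would reduce to a decomposition in which $C^n$ is built from a single $0$-handle by attaching only handles of index $1$ and $2$, organized so that the $2$-handles algebraically cancel the $1$-handles. Equivalently, $C^n$ is obtained from a ball $\mathbb{B}^n$ by attaching a collection of $1$-handles and the same number of $2$-handles.

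\emph{The key geometric idea} is then to split $C^n$ along a level set of a suitable handle decomposition (or equivalently a Morse function). First I would set $A$ to be the union of the $0$-handle with the $1$-handles, which is a regular neighborhood of a wedge of circles and hence a boundary-connected sum of copies of $S^1 \times \mathbb{B}^{n-1}$; this is not yet a ball. To repair this, I would exploit the $2$-handles: the attaching curves of the $2$-handles form a basis (up to the relevant algebra) for $\pi_1$ of this handlebody, and pushing the cocores appropriately lets me engulf the topology. The precise mechanism I would invoke is the $n \geq 5$ engulfing/expansion machinery from Ancel--Guilbault \cite{AG95} and Ancel--Guilbault--Sparks \cite{AGS}, which shows that a compact contractible manifold of dimension $\geq 5$ contains a spine that collapses nicely and can be thickened on both sides to give two balls meeting in a ball.

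Concretely, the argument I would run is: produce a collapsible (or at least nicely embedded) spine $K \subset C^n$ of codimension $\geq 3$; take $A$ to be a derived (regular) neighborhood of $K$, which is a ball since $K$ is collapsible and the codimension is large enough that a regular neighborhood of a collapsible complex in a manifold is a ball; then show the closure of the complement $B = \cl(C^n \setminus A)$ is also a ball, and that $C = A \cap B = \bdry A$ is a ball as well. The first two of these reduce to recognizing balls via collapsibility and the high-dimensional regular-neighborhood theory of \cite{RoSa}, while the intersection being a ball follows from arranging $A$ so that its frontier in $C^n$ is a collapsible codimension-$0$ submanifold of $\bdry C^n$ thickened inward. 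This is exactly the ``pseudo-handle'' style construction that the paper develops for $n=4$, now done unobstructed in high dimensions.

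\emph{The main obstacle}, and the place where the dimension hypothesis $n \geq 5$ is essential, will be showing that the complement $B$ is a ball rather than merely a contractible manifold. Getting $A$ (a regular neighborhood of a collapsible spine) to be a ball is relatively soft, but the complement of a ball inside a contractible manifold need not be simply connected a priori, and even when it is, one must upgrade ``contractible'' to ``ball'' — which is precisely where the $h$-cobordism theorem (valid for $n - 1 \geq 5$, i.e. $n \geq 6$) or the appropriate Poincar\'e-type recognition is used. The case $n = 5$ requires extra care because the relevant cross-sections are $4$-dimensional, so I would either handle it separately using the known validity of the relevant engulfing results in that range or cite the Ancel--Guilbault framework, which is designed precisely to cover all $n \geq 5$ uniformly. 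Once $A$, $B$, and $C$ are each recognized as balls, the splitting $C^n = \mathbb{B}^n \cup_{\mathbb{B}^n} \mathbb{B}^n$ follows immediately from the construction.
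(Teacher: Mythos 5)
First, a structural point: the paper does not prove this theorem at all — it is quoted as background, with the proof residing in \cite{AG95} and \cite{AGS} — so your proposal can only be judged on its own merits, and it contains a fatal gap. The central "concrete" step — produce a collapsible spine $K \subset \intr C^n$ and take $A$ to be a regular neighborhood of $K$ — is self-defeating. If $K$ is a genuine (PL, tamely embedded) spine with $K \searrow 0$, then by the very regular-neighborhood facts you invoke (and which the paper records: a regular neighborhood of a spine of $M$ is homeomorphic to $M$, and a regular neighborhood of a collapsible complex is a ball), $C^n$ itself would be homeomorphic to $\mathbb{B}^n$. But the theorem is interesting precisely because non-ball compact contractible $n$-manifolds exist in every dimension $n \geq 4$ — Glaser's examples quoted in Section 2 of this paper, or any compact contractible manifold whose boundary has nontrivial fundamental group. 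For such $C^n$ no collapsible tame spine can exist, so your construction cannot even begin; and if it could, it would prove the false statement that every compact contractible $n$-manifold is a ball. What Ancel--Guilbault actually produce is a \emph{wildly embedded} arc spine: $C^n$ is the mapping cylinder of a map $\partial C^n \rightarrow [0,1]$. The arc is collapsible as an abstract complex, but the wildness of its embedding is exactly what blocks regular-neighborhood theory (and hence blocks the conclusion $C^n \approx \mathbb{B}^n$); the two overlapping $n$-balls are extracted from that mapping-cylinder/cone structure, not by thickening a tame collapsible complex. Your proposal conflates these two kinds of "spine," and the argument rests entirely on the kind that does not exist.

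Two further problems are worth naming. With $B = \cl(C^n \setminus A)$ you get $A \cap B = \Bd_{C^n} A$, which is $(n-1)$-dimensional, whereas the splitting in the statement requires $A \cap B \approx \mathbb{B}^n$, a codimension-$0$ $n$-ball: the two balls must overlap in an $n$-ball, not meet along a hypersurface. (Worse, if $A$ really were a regular neighborhood of a spine, then $B$ would be a boundary collar $\partial C^n \times I$, which is never a ball.) And the handle-theoretic route you sketch — reduce to $0$-, $1$-, and $2$-handles, then decompose the resulting $2$-spine into collapsible pieces and thicken — is exactly the paper's \emph{four}-dimensional technique (the Jester's hat), and for a general spine it runs into territory the paper itself flags as open: whether the dunce hat, the spine of the Mazur manifold, splits as a union of collapsibles (Questions \ref{D split?} and \ref{Ma Split?}). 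Nothing in \cite{AG95} or \cite{AGS} supplies such a decomposition of a $2$-spine; their high-dimensional argument is genuinely different, so the appeal to their "engulfing machinery" does not fill this hole.
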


\begin{cor} %\emph{(Ancel and Guilbault)} 
For $n \geq 5:$
	\begin{enumerate}
  	\item the interior of every compact contractible $n$-manifold is an open splitter, and
  	\item there are uncountably many non-homeomorphic $n$-manfolds which are open splitters.
	\end{enumerate}
\end{cor}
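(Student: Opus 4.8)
The plan is to deduce both parts from the preceding theorem together with Proposition~\ref{ints of splitters split}. Part (1) is immediate: given a compact contractible $C^n$ with $n \ge 5$, the preceding theorem gives a closed splitting $C^n = \mathbb{B}^n \cup_{\mathbb{B}^n} \mathbb{B}^n$, and Proposition~\ref{ints of splitters split} upgrades this to an open splitting $\intr C^n = \mathbb{R}^n \cup_{\mathbb{R}^n} \mathbb{R}^n$. Hence every such interior is an open splitter, with no further work.

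For part (2) the first point to record is that there are only countably many compact $n$-manifolds up to homeomorphism, so the interiors supplied by part (1) cannot by themselves be uncountable; the uncountable family must consist of genuinely noncompact manifolds, and I would build these by an infinite connected sum at infinity (end-sum). First I would fix an infinite sequence of compact contractible $n$-manifolds $C_1, C_2, \dots$ whose boundaries have pairwise non-isomorphic fundamental groups $G_i = \pi_1(\partial C_i)$; such $C_i$ exist for $n \ge 5$ because there are infinitely many homology $(n-1)$-spheres carrying distinct superperfect fundamental groups (realized, in the high-dimensional range, by Kervaire's construction, e.g. with the $G_i$ drawn from the groups $SL(2,\mathbb{F}_p)$) and bounding contractible manifolds. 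By part (1) each $\intr C_i$ is an open splitter that is one-ended with fundamental group at infinity $G_i$. For each subset $S \subseteq \mathbb{N}$ I would form the one-ended open manifold $M_S$ obtained by end-summing the family $\{\intr C_i : i \in S\}$ along a single proper ray that escapes to infinity.

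Two things then need checking. For the splitting, I would choose the summing ray to lie inside the intersection copy of $\mathbb{R}^n$ in each splitting $\intr C_i = \mathbb{R}^n \cup_{\mathbb{R}^n} \mathbb{R}^n$ and track it through the two outer $\mathbb{R}^n$ summands; the end-sum then distributes across the splittings and writes $M_S = A \cup_C B$, where $A$, $B$, and $C$ are each infinite end-sums of copies of $\mathbb{R}^n$. For non-homeomorphy I would distinguish the $M_S$ by the fundamental group at infinity (the pro-$\pi_1$) of their single end: the collection of groups $G_i$ occurring in that inverse system recovers $S$, and since $\mathbb{N}$ has uncountably many subsets the family $\{M_S\}$ is uncountable.

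The hard part will be the lemma underlying the splitting step, namely that an infinite end-sum of copies of $\mathbb{R}^n$ is again homeomorphic to $\mathbb{R}^n$, so that the pieces $A$, $B$, $C$ really are Euclidean even though $M_S$ itself is not. Verifying this, while simultaneously confirming that the infinite end-sum is performed properly enough to keep $M_S$ one-ended and contractible and that the pro-$\pi_1$ invariant genuinely separates the distinct $G_i$, is where the real effort lies. This is precisely the connected-sum-at-infinity machinery developed later in this paper (and applied there to obtain Theorem~\ref{uncountable open 4-splitters}), here invoked in the more forgiving high-dimensional setting $n \ge 5$.
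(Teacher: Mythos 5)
Your proposal is correct and takes essentially the same route as the paper: part (1) is immediate from the preceding theorem together with Proposition \ref{ints of splitters split}, and part (2) rests on infinite connected sums at infinity of interiors of compact contractible manifolds with distinct indecomposable boundary groups, distinguished by the fundamental group at infinity. This is exactly the Curtis--Kwun-style machinery the paper attributes to \cite{AGS} for $n \geq 5$ and then develops in detail in Section 5 (Theorems \ref{uncountable CSI's} and \ref{Craig's no.2}, and Proposition \ref{infinite csi splits}) to obtain the dimension-4 analogue, Theorem \ref{uncountable open 4-splitters}.
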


\begin{thm} %\emph{(Ancel and Guilbault)} 
For $n \geq 5,$ every Davis $n$-manifold is an open splitter.
\end{thm}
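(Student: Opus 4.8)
The plan is to use the defining structure of a Davis $n$-manifold: by construction $D^n$ is a monotone union $D^n=\bigcup_{i\ge 1}C_i$ of compact contractible $n$-manifolds with $C_i\subset\intr C_{i+1}$, where each $C_{i+1}$ is obtained from $C_i$ by gluing on one more Davis ``chamber'' (a fixed compact contractible building block) along a codimension-$0$ piece of $\partial C_i$. This is precisely the reflection-group decomposition that makes $D^n$ contractible and yet not simply connected at infinity. Since each $C_i$ is compact, contractible, and $n\ge 5$, the Ancel--Guilbault--Sparks splitting theorem \cite{AGS} guarantees that $C_i$ splits into closed balls, $C_i=A_i\cup_{E_i}B_i$ with $A_i,E_i,B_i\approx\mathbb{B}^n$. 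The strategy is to choose these splittings to nest compatibly and then pass to a limit, where a theorem of M.\ Brown, that a monotone union of open $n$-cells is again an open $n$-cell, will assemble the three limiting pieces into copies of $\mathbb{R}^n$.

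First I would split the innermost piece $C_1=A_1\cup_{E_1}B_1$. Then, inductively, having split $C_i=A_i\cup_{E_i}B_i$, I would extend this splitting across the chamber attached to form $C_{i+1}$, producing $C_{i+1}=A_{i+1}\cup_{E_{i+1}}B_{i+1}$ with $A_i\subset\intr_{C_{i+1}}A_{i+1}$, $E_i\subset\intr_{C_{i+1}}E_{i+1}$, and $B_i\subset\intr_{C_{i+1}}B_{i+1}$. The required input is a \emph{relative} form of the compact splitting theorem: the cobordism $\cl(C_{i+1}\setminus C_i)$, which is the attached chamber, must be split so as to continue the three balls already present on the copy of $\partial C_i$ sitting inside it. Because the chamber is itself a compact contractible manifold with a controlled boundary pattern, I expect this relative splitting to come from the same pseudo-handle and collapsing techniques that prove the absolute statement in \cite{AGS}, now carried out rel the already-split boundary region.

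Granting the nested splittings, set $A=\bigcup_i\intr_D A_i$, $B=\bigcup_i\intr_D B_i$, and $C=\bigcup_i\intr_D E_i$. Each is an increasing union of open $n$-cells (the interiors in $D$ of the nested closed balls), so by Brown's theorem each of $A$, $B$, $C$ is homeomorphic to $\mathbb{R}^n$. The identities $A\cup B=D$ and $A\cap B=C$ then follow level by level: every point of $D$ lies in some $C_i=A_i\cup B_i$ and hence, via the inclusions $A_i\subset\intr A_{i+1}$ and $B_i\subset\intr B_{i+1}$, lies in $A\cup B$; and the nesting forces $A\cap B=\bigcup_i(\intr_D A_i\cap\intr_D B_i)=\bigcup_i\intr_D E_i=C$. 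This exhibits $D^n=\mathbb{R}^n\cup_{\mathbb{R}^n}\mathbb{R}^n$. Proposition~\ref{ints of splitters split} is the model for this passage from closed-ball data to open-ball data, applied now across an exhaustion rather than to a single compact splitter.

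The main obstacle is the inductive extension in the second step: arranging that a closed-ball splitting already living on $C_i$ can be continued over the newly glued chamber so that the three pieces grow as \emph{nested} balls, rather than merely splitting each $C_{i+1}$ independently via \cite{AGS}. Only this compatibility makes the three limiting sets genuine monotone unions of cells, which is what Brown's theorem requires; independently chosen splittings would give unions that need not even be increasing. A secondary technical point is to verify that the attaching region of each chamber meets the previous splitting in a ball-like pattern, so that the overlap ball $E_i$ and the two halves $A_i,B_i$ can indeed be prolonged across the chamber.
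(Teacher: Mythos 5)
Your proposal has a genuine gap, and you have correctly identified where it is: the inductive step. The splitting theorem of \cite{AGS} (or \cite{AG95}) is an \emph{absolute} statement about a compact contractible $n$-manifold; nothing in those papers, or in this one, provides the \emph{relative} version you need, namely that a closed-ball splitting of $C_i$ can be prolonged across the newly attached chamber(s) to a splitting of $C_{i+1}$ with $A_i\subset\intr A_{i+1}$, $E_i\subset\intr E_{i+1}$, $B_i\subset\intr B_{i+1}$. This is not a routine strengthening: the splitting of $C_i$ meets $\partial C_i$ in some pattern over which the absolute theorem gives you no control, the chamber is glued along a prescribed codimension-$0$ piece of $\partial C_i$ that need not interact well with that pattern, and a relative splitting theorem ``rel an already-split boundary region'' would have to be proved from scratch. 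Saying you ``expect'' it to follow from the same techniques is precisely the missing proof; and as you yourself note, without the nesting, Brown's theorem \cite{Bro} cannot be applied, since independently chosen splittings of the $C_i$ give unions that need not even be increasing.

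The route this paper takes (for $n=4$ in Section 5, mirroring the $n\geq 5$ argument of \cite{AGS}) avoids exactly this compatibility problem. Instead of an exhaustion, one uses the Ancel--Siebenmann theorem \cite{Gui}: a Davis manifold generated by $C$ is homeomorphic to $\intr\left(C\bdrysum -C\bdrysum C\bdrysum -C\bdrysum\cdots\right)$, equivalently to the connected sum at infinity of the interiors of the compact pieces. Each compact piece splits into closed balls by the absolute theorem, so each interior splits into open balls by Proposition \ref{ints of splitters split}; these splittings are chosen completely independently of one another. The work then goes into showing that a CSI of open splitters splits (Propositions \ref{CSI of splitters} and \ref{infinite csi splits}), which is accomplished by finding, in each intersection piece $C_i\approx\mathbb{R}^n$, a ray that is proper in both $A_i$ and $B_i$, and performing the CSI along regular neighborhoods of those rays; the sum then visibly decomposes as $(A_1\natural A_2\natural\cdots)\cup_{C_1\natural C_2\natural\cdots}(B_1\natural B_2\natural\cdots)$, and Note \ref{csi of nspaces} plus Brown's theorem identify the three pieces as copies of $\mathbb{R}^n$. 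In short: the paper replaces your unproven nested-extension lemma with a proper-ray lemma inside a single splitter, which is provable. If you want to salvage your outline, you should either prove the relative splitting theorem (a substantial new result) or switch to the BCS/CSI description of Davis manifolds.
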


\section{The Mazur and Jester's Manifolds}
\subsection{The Mazur Manifold}
\label{sec:MazMan}

\begin{figure}[h!]
\centering
\vspace{-6.7in}
%\centering
\includegraphics[height=8in]{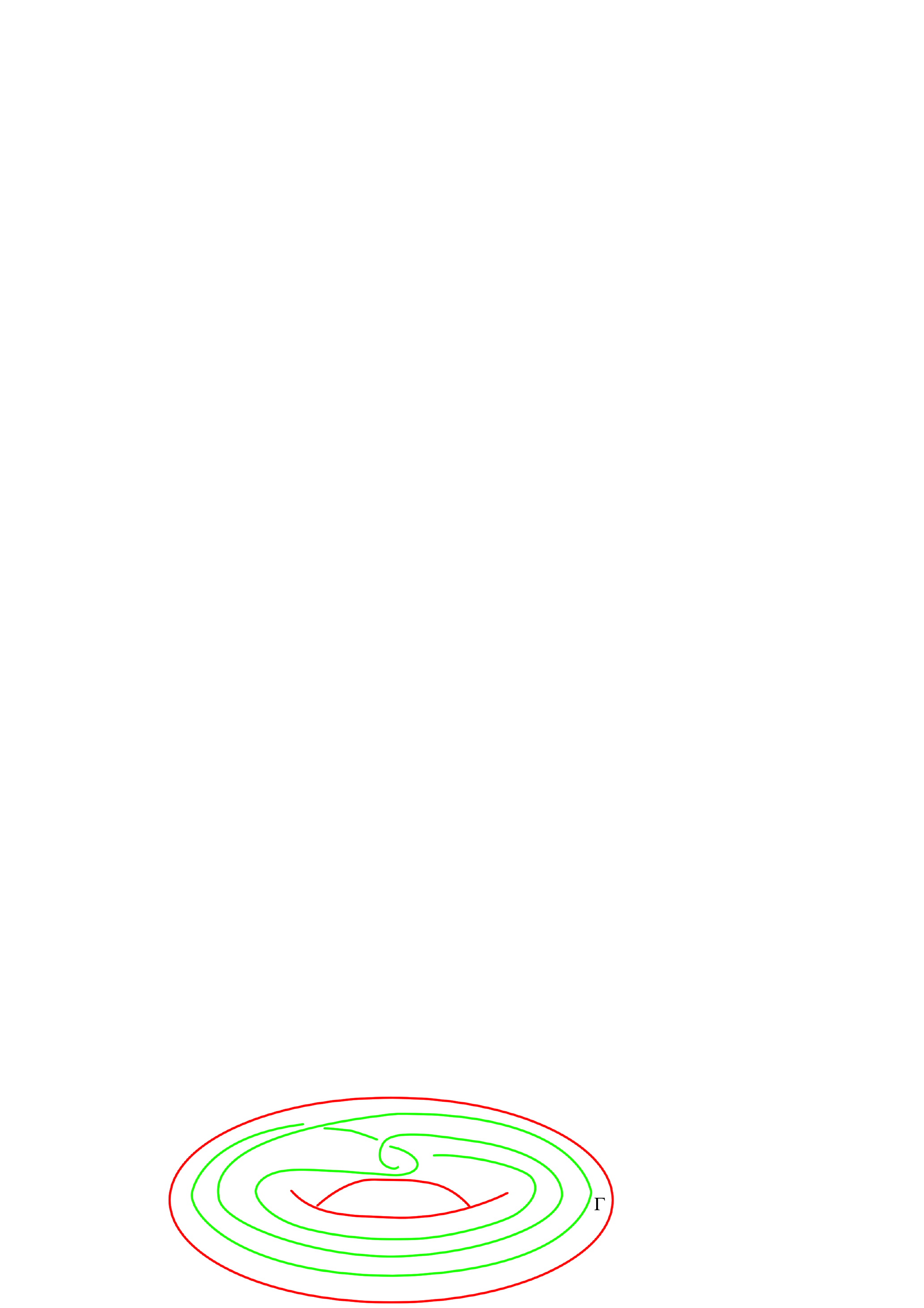}                   %USE TO ADD FIGURES
\caption{$\Gamma \subset \partial( S^1 \times \mathbb{B}^3)$} %\subset$ the Mazur Manifold}
\label{Mazcurve}
\end{figure}

In \cite{Maz}, Barry Mazur described what are now often called \emph{Mazur manifolds}.  
Starting with a $S^1\times\mathbb{B}^3$ one adds a 2-handle ${h^{(2)}\approx \mathbb{B}^2 \times \mathbb{B}^2}$ along the curve $\Gamma$ shown in Figure \ref{Mazcurve}. %(SOMETHING ABOUT FRAMING?). 
That is,   
\[M\!a^4_\Phi = S^1\times\mathbb{B}^3\cup_\Phi \mathbb{B}^2 \times \mathbb{B}^2\]
is a \emph{Mazur manifold}. Here $\Phi$ is a framing $\Phi:S^1\times\mathbb{B}^2\rightarrow T_{\Gamma}$, $T_{\Gamma}$ is a tubular neighborhood of $\Gamma$ in $\partial(\donut)$ and the domain $S^1\times\mathbb{B}^2$ is the first term in the union \[S^1\times\mathbb{B}^2\cup \mathbb{B}^2\times S^1=\partial(\mathbb{B}^2\times \mathbb{B}^2)\]

For each Dehn twist of the $S^1 \times S^1 = \partial(S^1\times \mathbb{B}^2)$ sending $S^1 \times p$ $(p \in S^1)$ to a closed curve (that is, an integer number of full twists), there exists a framing $\Phi.$ Thus the number of framings is infinite. Mazur chose a specific framing $\varphi$ yielding a specific manifold, which we'll denote $M\!a^4,$ for which he showed $\partial M\!a^4 \not\approx S^3$ so $M\!a^4 \not\approx \mathbb{B}^4.$ The chosen framing corresponds to a parallel copy of $\Gamma$ say $\Gamma'=\varphi(S^1 \times p)$ which lies at the ``top" (the up direction is perpendicular to the page, toward the viewer) of $S^1 \times \mathbb{B}^2.$ Thus there are no twists with this framing.

\begin{figure}[!ht]
%\vspace{in}
\includegraphics[trim=0 250 0 200,height=3.5in, clip]{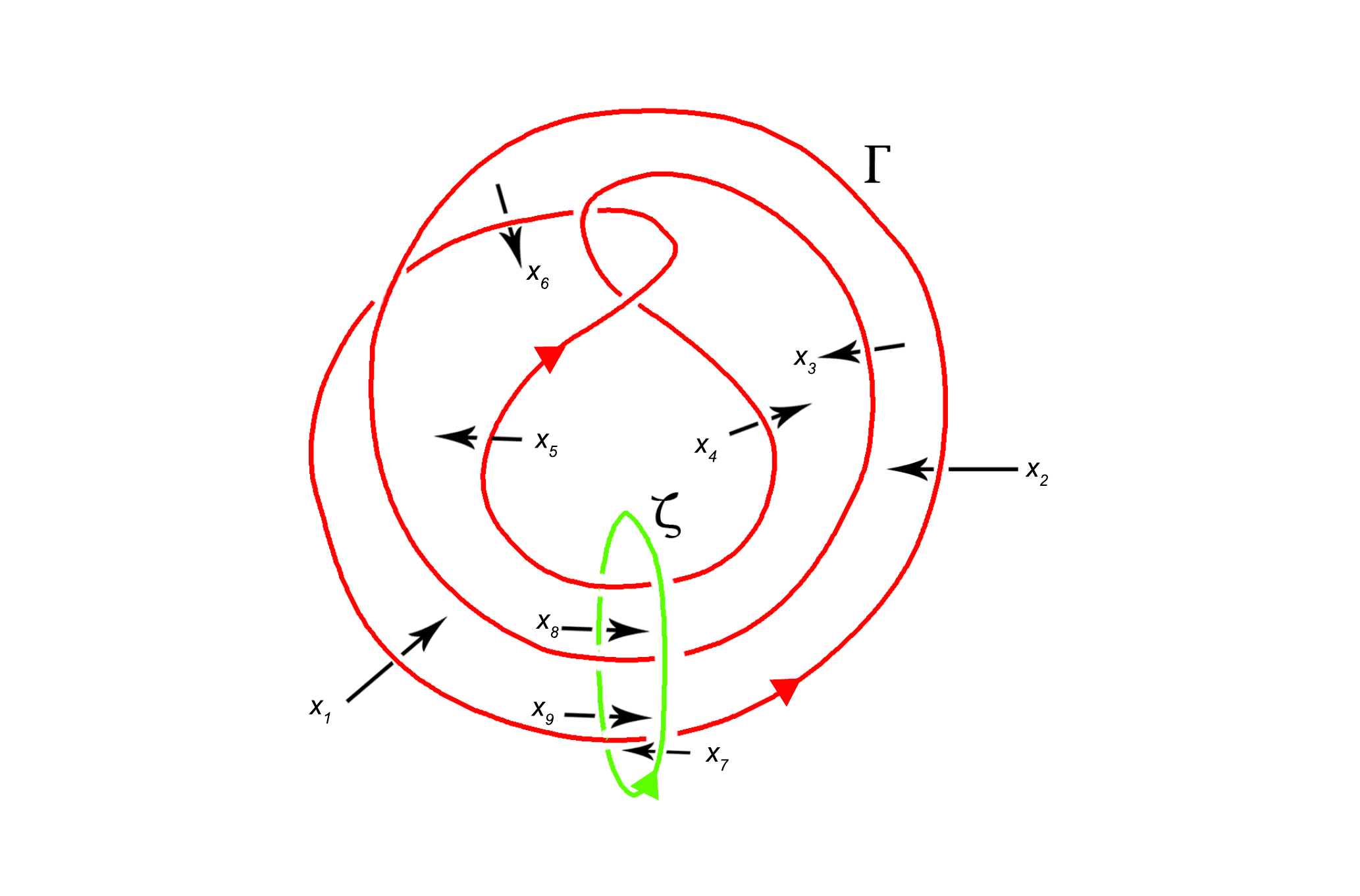}                   %USE TO ADD FIGURES
\centering
\caption{Wirtinger diagram of the Mazur link}
\label{Mazurlink}
\end{figure}

Here we'll describe our interpretation of his argument for the nontriviality of $\pi _1(\partial M\!a^4).$ The details of this calculation will play a key role in our proof of Theorem \ref{infinite 4-splitters}. Starting with the link $\Gamma \cup \zeta$ in $S^3$ pictured in Figure \ref{Mazurlink}, we obtain said figure's Wirtinger presentation (see \cite[p. 56]{Rol} for a treatment of Wirtinger presentations). This gives a presentation with exactly one generator for each arc in the link diagram. These generators correspond to the loops in $S^3$ which start at the viewer's nose (the basepoint), travel under the arc, and then return home (to the nose). Thus in our picture the generators are the $x_i$ as pictured. The relators in the presentation correspond to the undercrossings of pairs of arcs. As there are 9 undercrossings the Wirtinger presentation of this link diagram has 9 generators and 9 relators: $\left\langle x_1,...,x_9|r_1,...,r_9\right\rangle.$ 
We then perform a Dehn drilling on a tubular neighborhood, $N(\zeta)\approx \mathbb{B}^2\times S^1,$ of $\zeta.$ That is, we remove $\intr N(\zeta).$ Next, we perform a Dehn filling by sewing in $N(\zeta)$ backwards (ie sewing in a $S^1 \times \mathbb{B}^2$) along  $\partial N(\zeta).$ This Dehn surgery on ${S^3\approx (S^1\times \mathbb{B}^2) \cup_{S^1 \times S^1} (\mathbb{B}^2 \times S^1)}$ results in an $(S^1\times \mathbb{B}^2) \cup_{S^1 \times \partial \mathbb{B}^2} (S^1 \times \mathbb{B}^2)\approx S^1 \times S^2 $ with $\Gamma$ embedded as in Figure \ref{Mazcurve}. This surgery exchanges $N(\zeta)$'s meridian with its longitude. Thus the group element corresponding to following around $\zeta$ %($\beta$ in figure \ref{Mazurlink}) 
is killed and we must add in a relator, say $r_\zeta=x_5x_2^{-1}x_1^{-1}=1,$ to our presentation to adjust for this.

Adding a 2-handle along $\Gamma$ (and throwing out its portion of $M\!a^4$'s interior) gives our $\partial M\!a^4=(S^1 \times S^2 -\intr N(\Gamma))\cup_{\partial N(\Gamma)} (\mathbb{B}^2 \times S^1).$ We describe the gluing of $\mathbb{B}^2 \times S^1$ in two steps. We first glue in a thickened meridional disc, $D,$ which kills off $\Gamma'$ the curve to which it is it is attached. %(see Figure \ref{meridional disc})
 Thus to our Wirtinger presentation we introduce a relator $r_\Gamma=x_7^{-1}x_5^{-1}x_7x_3^{-1}x_2^{-1}x_7^{-1}=1.$ We next glue on the rest of $\mathbb{B}^2 \times S^1.$ The closed complement of $D$ in $\mathbb{B}^2 \times S^1$ is a 3-ball and it is attached along its entire boundary. Adding such does not change the fundamental group and thus $\pi_1(\partial M\!a^4)\cong  \left\langle x_1,...,x_9|r_1,...,r_9,r_\zeta,r_\Gamma\right\rangle.$%see mtg with craig notes 7/4/14

Proceeding as in \cite{Maz}, let $\beta = x_7,$ $\lambda=x_2,$ (see Fig.  \ref{Mazurlink}) and $\alpha=\beta \lambda.$ Via Tietze transformations (see \cite [p.~79]{Geo} for a treatment on Tietze transformations), it was shown in \cite{Maz} that 
	\[\pi_1(\partial M\!a^4) \cong <\alpha, \beta | \beta^5=\alpha^7, 				\beta^4=\alpha^2\beta\alpha^2> \text{ and }\]
	\[G:=\pi_1(\partial M\!a^4)/\text{nc}\{\beta^5=1\} \cong <\beta,\gamma|\gamma^7=\beta^5=(\beta\gamma)^2=1>\]
where $\gamma=\alpha^2.$  
We claim $G$ maps nontrivially into the subgroup of the isometries of the hyperbolic plane generated by reflections in the geodesics containing the edges of a triangle with angles $\pi/7,$ $\pi/5,$ and $\pi/2.$ 
That is, there exists a homomorphism
 \[h:G \rightarrow \text{Isom}(\mb{H}^2)\]
so that $\text{Im} h$ can be generated by rotations with centers at the vertices of a triangle  $\Delta ABC$ with angles $\pi/7,$ $\pi/5,$ and $\pi/2.$ 
%See Figure \ref{Triangle in H^2}.
 Here $h(\beta)=$ rotation with angle $-2\pi/5$ at $C$ and $h(\gamma)=$ rotation with angle $2\pi /7$ at $A$. 

We'll show the relator $h((\beta \gamma)^2)=1$ is satisfied. Let $r_{XY}$ be reflection in the geodesic containing $X$ and $Y.$ Then $h(\beta)=r_{BC}\circ r_{AC}$ and  $h(\gamma)=r_{AC}\circ r_{AB},$ so that $h(\beta)h(\gamma)= r_{BC}\circ r_{AC}\circ r_{AC}\circ r_{AB}=r_{BC} \circ r_{AB}.$ This last isometry is a rotation at $B$ with angle $-\pi$ and $h(\beta \gamma)$ is shown to have order 2.

This shows $\text{Im} h$ is nontrivial. Hence $\pi_1(\partial M\!a^4)$ is nontrivial and thus $\partial M\!a^4 \not\approx S^3.$

We now state and prove the following Proposition which we will employ in Subsection \ref{Theorem of Wright}.

\begin{prop}
\label{gamma nontrivial}
Let $m_\Gamma$ be the meridian of the torus $\partial T_\Gamma.$ Then $m_\Gamma$ is nontrivial in $S^1 \times S^2 - \text{int}(T_\Gamma).$
\end{prop}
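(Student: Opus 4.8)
\emph{Proof proposal.} The plan is to reuse the fundamental-group computation carried out above for $\partial M\!a^4$, but halted one relator early. Since $T_\Gamma$ is a tubular neighborhood of $\Gamma$, the space $S^1 \times S^2 - \text{int}(T_\Gamma)$ is exactly the manifold obtained from the complement of the link $\Gamma \cup \zeta$ in $S^3$ by the Dehn surgery on $\zeta$, taken \emph{before} any $2$-handle (equivalently, before the meridional disc $D$) is glued along $\Gamma$. Consequently its fundamental group is presented by the Wirtinger generators and relators of Figure \ref{Mazurlink} together with the surgery relator $r_\zeta$, but \emph{without} the handle relator $r_\Gamma$; that is,
\[
\pi_1\bigl(S^1 \times S^2 - \text{int}(T_\Gamma)\bigr) \cong \langle x_1,\dots,x_9 \mid r_1,\dots,r_9, r_\zeta\rangle.
\]
In particular $\pi_1(\partial M\!a^4)$ is the quotient of this group by the normal closure of $r_\Gamma$, and the quotient map $\pi_1(S^1 \times S^2 - \text{int}(T_\Gamma)) \twoheadrightarrow \pi_1(\partial M\!a^4)$ is induced by inclusion, so we also obtain a surjection onto $G$.

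First I would identify the meridian $m_\Gamma$ as a group element. A meridian of the component $\Gamma$ is represented by the Wirtinger generator of any arc of $\Gamma$, and all such meridians are mutually conjugate in the complement (conjugate by sliding the small linking loop around $\Gamma$). Reading off Figure \ref{Mazurlink}, the generator $\beta = x_7$ is such an arc-meridian of $\Gamma$, so $m_\Gamma$ already lies in the conjugacy class of $\beta$ in $\pi_1(S^1 \times S^2 - \text{int}(T_\Gamma))$. Composing the surjection above with the quotient $\pi_1(\partial M\!a^4) \twoheadrightarrow G$ and the homomorphism $h \colon G \to \text{Isom}(\mathbb{H}^2)$ of the previous discussion yields a homomorphism $\phi$ from $\pi_1(S^1 \times S^2 - \text{int}(T_\Gamma))$ to $\text{Isom}(\mathbb{H}^2)$. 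Since $h(\beta)$ is the rotation of angle $-2\pi/5$ about $C$, which has order $5$ and is in particular nontrivial, and since $\phi(m_\Gamma)$ is conjugate to $h(\beta)$, the element $\phi(m_\Gamma)$ is a nontrivial isometry. Therefore $m_\Gamma \neq 1$, proving the proposition.

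The step I expect to be the main obstacle is the bookkeeping in the second paragraph: verifying that the geometric meridian $m_\Gamma$ really is carried to (a conjugate of) the generator $\beta = x_7$, and confirming that this class is not inadvertently trivialized when we pass to the quotient $\pi_1(\partial M\!a^4)$. The latter is the delicate point, since attaching the $2$-handle along $\Gamma$ \emph{does} kill the framing curve $\Gamma' = r_\Gamma$; one must check that it does not also kill the meridian. Here it is essential that the meridional disc $D$ is glued along $\Gamma'$ (the longitude) rather than along $m_\Gamma$: under the replacement of $\text{int}(T_\Gamma)$ by $\mathbb{B}^2 \times S^1$ the meridian $m_\Gamma$ is matched with the core direction, which is nontrivial in $\mathbb{B}^2 \times S^1$, so $m_\Gamma$ survives into $\partial M\!a^4$ with nontrivial image, consistent with $h(\beta) \neq 1$. (One could alternatively try a purely homological argument via $H_1(S^1 \times S^2 - \text{int}(T_\Gamma))$, but this is less robust, as the meridian's homology class depends on how $\Gamma$ sits in $H_1(S^1\times S^2)$; the representation-theoretic route above sidesteps that issue entirely.)
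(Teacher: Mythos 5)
Your first paragraph is fine and is exactly the paper's framework: $\pi_1\bigl(S^1\times S^2-\text{int}(T_\Gamma)\bigr)\cong\langle x_1,\dots,x_9\mid r_1,\dots,r_9,r_\zeta\rangle$, and nontriviality of $m_\Gamma$ is detected by pushing forward through $\pi_1(\partial M\!a^4)\twoheadrightarrow G\stackrel{h}{\rightarrow}\text{Isom}(\mathbb{H}^2)$. The gap is precisely at the step you flagged as the main obstacle: the identification of $m_\Gamma$ with a conjugate of $\beta=x_7$. In Figure~\ref{Mazurlink}, $x_7$ is \emph{not} an arc of $\Gamma$; it is an arc of the surgery circle $\zeta$, so it represents the meridian of $\zeta$ (equivalently, the class that survives as the generator of $\pi_1(S^1\times S^2)\cong\mathbb{Z}$ after the Dehn surgery). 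The Wirtinger generators eligible to represent $m_\Gamma$ are the arcs of $\Gamma$, e.g.\ the ones recorded in $r_\zeta=x_5x_2^{-1}x_1^{-1}$ (the strands of $\Gamma$ passing over $\zeta$), and the paper accordingly chooses $x_5$. The two classes genuinely differ: from $r_9$ and $r_\zeta$ one gets $[x_5]=[x_1]+[x_2]=2[x_2]$ in $H_1$, while conjugacy of the $\Gamma$-arcs gives $[x_5]=[x_2]$; hence $[m_\Gamma]=0$ in $H_1\bigl(S^1\times S^2-\text{int}(T_\Gamma)\bigr)\cong\mathbb{Z}$ (as must happen, since $\Gamma$ has winding number one), whereas $[\beta]$ is a generator of that $\mathbb{Z}$. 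So $m_\Gamma$ cannot be conjugate to $\beta$. One can also see the failure inside the representation itself: the relators give $x_5=\beta^{-2}\gamma$ already in $\pi_1\bigl(S^1\times S^2-\text{int}(T_\Gamma)\bigr)$, and a trace computation in the $(2,5,7)$-triangle group (using $\cosh d(A,C)=\cot\tfrac{\pi}{5}\cot\tfrac{\pi}{7}$, with the rotation directions forced by $(h(\beta)h(\gamma))^2=1$) gives $\operatorname{tr}\bigl(h(\beta)^{-2}h(\gamma)\bigr)=2\bigl[\cos\tfrac{2\pi}{5}\cos\tfrac{\pi}{7}+\sin\tfrac{2\pi}{5}\sin\tfrac{\pi}{7}\cot\tfrac{\pi}{5}\cot\tfrac{\pi}{7}\bigr]\approx 2.92>2$: the image of the meridian is a \emph{hyperbolic} isometry, not conjugate to the order-$5$ rotation $h(\beta)$. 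Consequently your concluding step (``$\phi(m_\Gamma)$ is conjugate to $h(\beta)$, which is nontrivial'') is based on a false premise, and the proof does not go through as written.

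The repair is exactly the paper's computation: take $x_5$ as the representative of $m_\Gamma$, use $r_9$ and $r_\zeta$ to derive $x_5=\beta^{-2}\gamma$, and then observe that $h(\beta^{-2}\gamma)$---a rotation about $C$ composed with a nontrivial rotation about $A\neq C$---cannot be the identity because it does not fix $A$. Note that only \emph{nontriviality} of the image is needed, not membership in any particular conjugacy class, which is fortunate since the actual image is not elliptic at all. A secondary remark: the heuristic in your closing paragraph---that $m_\Gamma$ ``survives'' into $\partial M\!a^4$ because it is glued to the core direction of $\mathbb{B}^2\times S^1$---is not a valid argument (an element nontrivial in an attached piece can perfectly well die in the union; only van Kampen controls this), but your main line of reasoning, like the paper's, never needs it: nontriviality of the image in $\text{Isom}(\mathbb{H}^2)$ automatically pulls back to nontriviality in $\pi_1\bigl(S^1\times S^2-\text{int}(T_\Gamma)\bigr)$.
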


\begin{proof}
We choose $x_5$ as our representative of $m_\Gamma.$ By the relator \[r_9:x_1=x_7^{-1}x_2x_7=\beta^{-1}\lambda\beta=\beta^{-1}(\beta^{-1}\alpha)\beta\] we get $x_1=\beta^{-2}\alpha\beta.$ By $r_\zeta: x_5=x_1x_2$ we obtain \[x_5=(\beta^{-2}\alpha\beta)(\beta^{-1}\alpha)=\beta^{-2}\alpha^2=\beta^{-2}\gamma.\]

Thus 
\begin{eqnarray}
h(x_5)&=&h(\beta^{-2}\gamma)\nonumber \\
			&=& h(\beta^{-2})h(\gamma)\nonumber\\
			&=&	\text{(rotation of }4\pi/5 \text{ at } C)\text{(rotation of }2\pi/7 \text{ at } A)\nonumber\\
			&\neq& 1_{\mb{H}^2} \nonumber \hspace{1cm}\mbox{(since $A$ is not fixed).}
\end{eqnarray}
Thus $x_5$ is not trivial in $\partial M\!a^4.$ Hence $x_5$ is nontrivial in $S^1 \times S^2- \text{int}(T_\Gamma).$ This concludes the proof of Proposition \ref{gamma nontrivial}. 
\end{proof}

The following question is still open.

\begin{ques} 
\label{Ma Split?}
Does $M\!a^4$ split into closed balls?
\end{ques}

\begin{ques} 

Does there exist an infinite number of closed 4-dimensional splitters?
\end{ques}

We will give an answer to this question in Subsection \ref{Theorem of Wright}.

\subsection{The Jester's Manifolds}
As an initial step towards constructing 4-dimensional splitters, we describe a collection of 4-manifolds similar to Mazur's. %Our definition of the Jester's manifolds is analogous to our definition of the Mazur manifolds. We
Start with a $S^1 \times \mathbb{B}^3$ and within its $S^1 \times S^2$ boundary select a curve $C$ as follows. Let $T$ be a tubular neighborhood of $C$ in our $S^1 \times S^2$. We have chosen $C$ so that it is the preimage of the Mazur curve $\Gamma$ under the standard double covering map ${p:S^1 \times \mathbb{B}^3 \rightarrow S^1 \times \mathbb{B}^3 }.$ %which is a degree 2 map in the first coordinate and the identity in the second. 

\begin{figure}[!ht]
\centering
\vspace{-6.5in}
\includegraphics[height=8in]{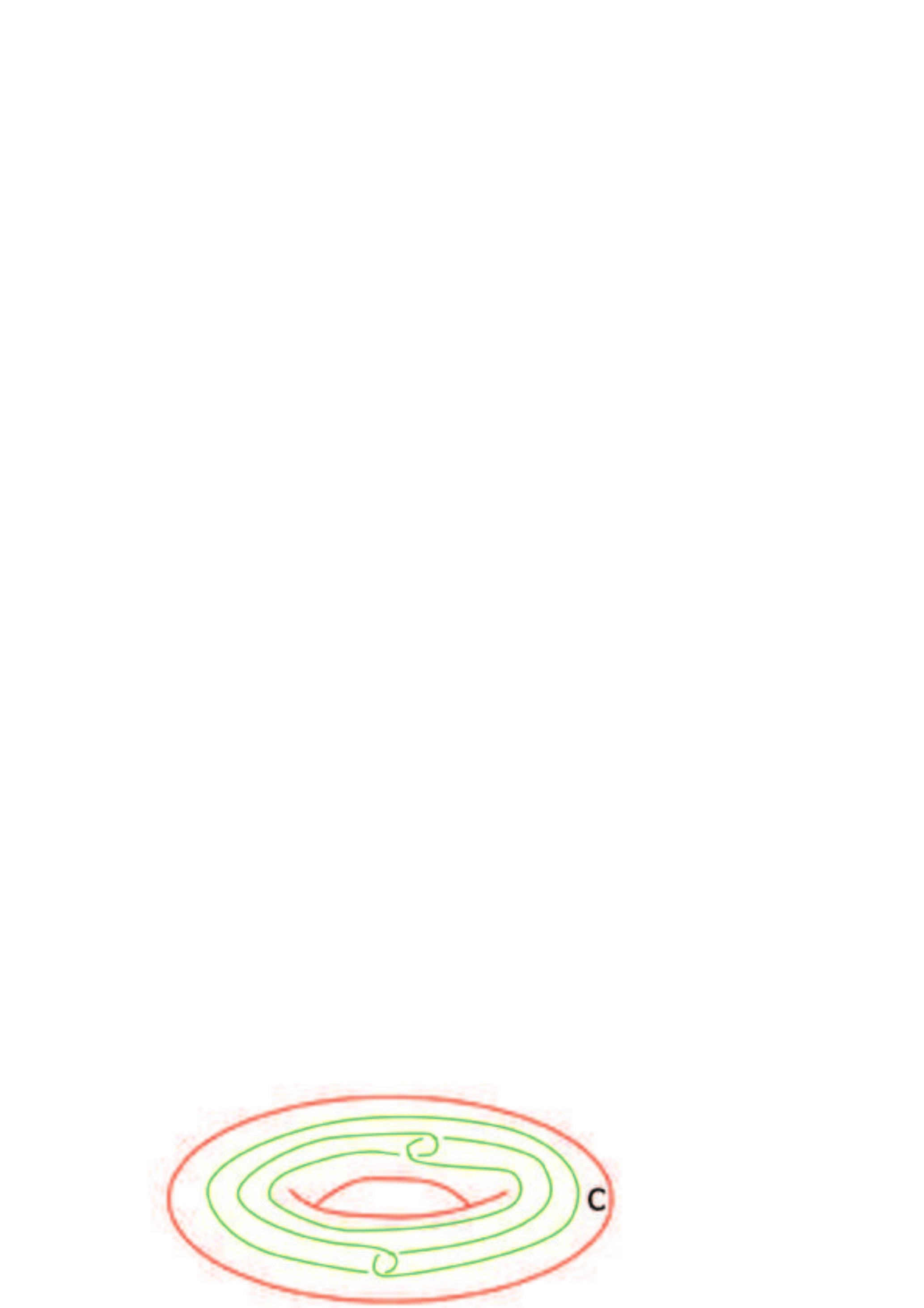}                  %USE TO ADD FIGURES
\caption{$C \subset \partial(S^1 \times \mathbb{B}^3)$} %\subset$ a Jester's Manifold}
\end{figure}

Then, given a framing ${\Psi:S^1 \times \mathbb{B}^2 \rightarrow T,}$ define  
\[M_{\Psi}=S^1\times\mathbb{B}^3\cup_{\Psi} \mathbb{B}^2 \times \mathbb{B}^2\]
where the domain is the $S^1\times\mathbb{B}^2$ factor in the boundary of our $2$-handle $h^{(2)}\approx \mathbb{B}^2 \times \mathbb{B}^2.$ We call such an  $M_{\Psi}$ a \emph{Jester's manifold.}

(In Section 4, we will expand our definition of Jester's manifold to include analogous attachments using \emph{pseudo}-handles.)

\begin{remark}Initially, we had hoped that, by altering the framings, we could prove the existence of an infinite collection of these Jester's manifolds. Unfortunately, the group theoretic calculations proved too complicated. Fortunately, however, we were able to get around this problem by employing a technique of David Wright's (see Subsection \ref{Theorem of Wright}). We are still interested in the following question.
\end{remark}

\begin{ques} 
Does there exist a Jester's manifold that is not homeomorphic to a ball? Are there an infinite number of Jester's manifolds (as defined above)?

\end{ques}

\section{Spines}

\subsection{Collapses}

We borrow our definitions %(and some figures) 
of collapse from \cite[pp.~3,4,14,15]{Coh73}. We will be denoting the cone over a simplicial complex $A$ with cone point $a$ by $aA.$

\begin{defn}
If $K$ and $L$ are finite simplicial complexes we say that there is an \emph{elementary simplicial collapse} from $K$ to $L,$ and write $K \searrow^e L,$ if $L$ is a subcomplex of $K$ and $K=L \cup aA$ where $a$ is a vertex of $K$, $A$ and $aA$ are simplexes of $K$, and $aA \cap L= a(\partial A).$ We call such an $A$ a \emph{free face} of $K.$
\end{defn}

Observe that a free face completely specifies an elementary simplicial collapse.

\begin{defn} Suppose that $(K,L)$ is a finite CW pair. Then $K \searrow^e L$--i.e. $K$ \emph{collapses to} $L$ \emph{by an elementary collapse}--iff
	\begin{enumerate}
	\item $K=L \cup e^{n-1} \cup e^n$ where $e^n$ and $e^{n-1}$ are not in $L,$
	\item there exists a ball pair $(Q^n, Q^{n-1}) \approx (\mathbb{B}^n, \mathbb{B}^{n-1})$ and a map $\varphi: Q^n \rightarrow K$ such that
		\begin{enumerate}[a)]%[label=\alph*]
		\item $\varphi$ is a characteristic map for $e^n$
		\item $\varphi |Q^{n-1}$ is a characteristic map for $e^{n-1}$
		\item $\varphi (P^{n-1}) \subset L^{n-1},$ where $P^{n-1}\equiv \text{cl}(\partial Q^n - Q^{n-1}).$
		\end{enumerate}
	\end{enumerate}
\end{defn}	

In both the simplicial and CW cases we define

\begin{defn}
$K$ \emph{collapses} to $L$, denoted $K \searrow L,$ if there is a finite sequence of elementary collapses
\[K=K_0\searrow^e K_1 \searrow^e K_2 \searrow^e ... \searrow^e K_l=L.\]
If $K$ collapses to a point we say $K$ is \emph{collapsible} and write $K \searrow 0$.
\end{defn}

\begin{defn} Suppose $M$ is a compact PL manifold. If $K$ is a subcomplex of $M$ contained in $\intr M$ with $M \searrow K$ we say $K$ is a \emph{spine} of $M.$
\end{defn}

We will make use of the following regular neighborhood theory due to J. H. C. Whitehead. The following two propositions, theorem, and corollary can be found in \cite[pp.~40,41]{RoSa}.

\begin{prop} Suppose $M \supset M_1$ are PL $n$-manifolds with $M \searrow M_1.$ Then there exists a homeomorhism $h:M\rightarrow M_1.$ 
\end{prop}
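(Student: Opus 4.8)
The plan is to prove that if $M \supset M_1$ are PL $n$-manifolds with $M \searrow M_1$, then $M$ and $M_1$ are PL-homeomorphic. The natural approach is to reduce to the case of a single elementary collapse and then reassemble. First I would observe that a general collapse $M \searrow M_1$ factors as a finite sequence of elementary collapses $M = K_0 \searrow^e K_1 \searrow^e \cdots \searrow^e K_l = M_1$; if I can produce a homeomorphism at each elementary stage, composing them yields the desired $h: M \to M_1$. However, the intermediate complexes $K_i$ in such a sequence need not themselves be manifolds, so I cannot literally apply an elementary-collapse-of-manifolds argument termwise. The standard remedy is to pass to regular neighborhoods: I would instead work with $N = N(M_1; M)$, a regular neighborhood of $M_1$ in $M$, and use the uniqueness and collaring results from Whitehead's regular neighborhood theory.

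The key mechanism I would invoke is that $M_1$ is a spine of $M$ in the sense defined above, and that for PL manifolds a regular neighborhood of a subcomplex to which the manifold collapses is the whole manifold up to PL-homeomorphism. Concretely, I would argue as follows. Since $M \searrow M_1$, the manifold $M$ is a regular neighborhood of $M_1$ in itself. By the Regular Neighborhood Theorem, any two regular neighborhoods of $M_1$ in $M$ are PL-homeomorphic by a homeomorphism fixing $M_1$. Choosing a small regular neighborhood $N$ of $M_1$ interior to $M$, the Regular Neighborhood Theorem gives a PL-homeomorphism $M \to N$. Finally, because $M_1$ is itself a PL $n$-manifold (this is the crucial hypothesis that distinguishes this proposition from the general spine situation), a regular neighborhood $N$ of the $n$-manifold $M_1$ is obtained from $M_1$ by attaching an external collar, and a collar is PL-homeomorphic to $M_1 \times [0,1]$, which is in turn PL-homeomorphic to $M_1$. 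Composing these identifications produces $h : M \to M_1$.

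I expect the main obstacle to be the step asserting that a regular neighborhood of the full-dimensional manifold $M_1$ collapses back onto $M_1$ via a collar, and hence is homeomorphic to $M_1$ rather than merely collapsing to it. This relies on the fact that $M_1$, being a codimension-zero PL submanifold, is collared in $M$ and that the regular neighborhood can be taken to be exactly such a collar; here one must be careful that $M_1 \subset \intr M$ or else handle the boundary, and that the collar structure is honestly PL and not merely topological. The contractibility or other special features of the ambient spaces play no role; the entire argument is a formal consequence of Whitehead's regular neighborhood machinery, so the work is in correctly citing and chaining together the uniqueness of regular neighborhoods, the existence of collars, and the product structure $M_1 \times [0,1] \approx M_1$.
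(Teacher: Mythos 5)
The paper does not actually prove this proposition: it quotes it, together with the theorem and corollary that follow it, from Rourke--Sanderson \cite[pp.~40,41]{RoSa}. Your outline reconstructs essentially the argument that lies behind that citation: recognize $M$ itself as a regular neighborhood of $M_1$ in $M$ (this is the corollary of the Simplicial Neighbourhood Theorem --- the bicollaring hypothesis on the frontier is vacuous because the frontier of $M$ in $M$ is empty), invoke uniqueness of regular neighborhoods to obtain a PL homeomorphism from $M$ to a derived neighborhood $N$ of $M_1$, and then identify $N$ with $M_1$. So your architecture is correct and is the same as that of the source the paper relies on.

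Two of your statements need repair, though neither damages that architecture. First, ``a collar is PL-homeomorphic to $M_1 \times [0,1]$, which is in turn PL-homeomorphic to $M_1$'' cannot be right as written: $M_1 \times [0,1]$ is an $(n+1)$-manifold, so it is homeomorphic neither to the collar nor to $M_1$. What you want is collar absorption: a derived neighborhood has the form $N = M_1 \cup_F (F \times [0,1])$, where $F$ is the frontier of $M_1$ in $M$ (all of $\partial M_1$ in case $M_1 \subset \intr M$) and $F \times \{0\}$ is identified with $F \subset \partial M_1$; since $\partial M_1$ is collared in $M_1$, attaching this external collar leaves the PL homeomorphism type unchanged, giving $N \approx M_1$. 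Second, ``choosing a small regular neighborhood $N$ of $M_1$ interior to $M$'' is not available in general: the proposition, unlike the paper's definition of spine, does not assume $M_1 \subset \intr M$, and if $M_1$ meets $\partial M$ then so does every neighborhood of it. The fix is simply to drop ``interior to $M$'' and take $N$ to be a second derived neighborhood of $M_1$ in $M$; the uniqueness theorem and the collar description of $N$ hold in that generality, and your chain $M \approx N \approx M_1$ then goes through.
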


\begin{thm} Suppose $X \subset M,$ where $M$ is a PL manifold, $X$ is  compact polyhedron, and $X \searrow Y.$ Then a regular neighborhood of $X$ in $M$ collapses to a regular neighborhood of $Y$ in $M.$
\end{thm}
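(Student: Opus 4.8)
The plan is to reduce to a single elementary collapse and then to \emph{thicken} it. Since $X \searrow Y$ is by definition a finite chain $X = X_0 \searrow^e X_1 \searrow^e \cdots \searrow^e X_m = Y$, and since each $X_i$ is itself a compact polyhedron in $M$, it suffices to treat one elementary collapse $X \searrow^e Y$: if a regular neighborhood of $X_i$ collapses to a regular neighborhood $N_{i+1}$ of $X_{i+1}$, then $N_{i+1}$ is again a regular neighborhood to which the single-step result applies, and composing the resulting collapses (by transitivity of $\searrow$) yields a collapse from a regular neighborhood of $X$ to one of $Y$. Moreover, by Whitehead's uniqueness theorem any two regular neighborhoods of a fixed compact polyhedron in $M$ are ambiently PL homeomorphic rel that polyhedron, and PL homeomorphisms carry collapses to collapses; hence it is enough to exhibit \emph{one} convenient regular neighborhood of $X$ collapsing to \emph{one} regular neighborhood of $Y$.

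Next I would make the elementary collapse geometric. Triangulate $M$ by a complex $K$ in which $X$ and $Y$ are full subcomplexes and $X = Y \cup aA$ with $A$ a free face, so that $aA$ is a PL ball $B$ whose boundary splits as $\partial B = A \cup a\partial A$ with $B \cap Y = a\partial A = \cl(\partial B - \intr A)$. Thus, polyhedrally, $X$ is obtained from $Y$ by attaching the ball $B$ along the sub-ball $a\partial A$ of $\partial B$. I would then build the regular neighborhoods concretely as derived (simplicial) neighborhoods in a first derived subdivision $K'$: set $N_Y = N(Y, K')$ and $N_X = N(X, K')$, which are regular neighborhoods of $Y$ and $X$ respectively with $N_Y \subset N_X$, each collapsing to its core.

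The heart of the argument, and the step I expect to be the main obstacle, is to show that $N_X$ decomposes as $N_X = N_Y \cup H$, where $H$ is the part of $N_X$ carried by the attached ball $B$, with two key properties: (i) $H$ is itself a PL ball, and (ii) $G := H \cap N_Y$ is a face of $\partial H$, i.e. $(H,G)$ is a PL ball pair. Property (i) should follow because $B = aA$ is collapsible (indeed a ball) and a derived neighborhood of a collapsible polyhedron in a manifold is a ball; property (ii) is where one must control how the thickening of $B$ meets the thickening of $Y$, using the product/collar structure supplied by regular neighborhood theory along the attaching region $a\partial A$. Verifying this clean ball-pair decomposition --- rather than producing it by hand simplex-by-simplex --- is the delicate point, since one must rule out extra intersections between the two thickenings and confirm that their common frontier is exactly a single face of $\partial H$.

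Granting the decomposition, the conclusion is immediate: a PL ball pair $(H,G)$ with $G \subset \partial H$ a proper face satisfies $H \searrow G$ (identify $(H,G)$ with $(G \times I,\, G \times 0)$ via a collar and collapse the $I$-direction). Because this collapse is supported in $H$ and fixes $G = H \cap N_Y$, it extends by the identity over $N_Y$ to a collapse $N_X = N_Y \cup H \searrow N_Y \cup G = N_Y$. This settles the single-step case, and the induction and uniqueness reductions of the first paragraph then deliver the theorem in full.
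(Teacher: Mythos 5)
You should know at the outset that the paper contains no proof of this statement: it is quoted, along with the surrounding propositions and corollary, from Rourke--Sanderson \cite[pp.~40,41]{RoSa}. In that source the result is not proved by direct thickening but deduced from the chapter's main machinery: a regular neighborhood $N$ of $X$ collapses to its core, so $N \searrow X \searrow Y$; by the simplicial neighborhood theorem (a compact manifold neighborhood of $Y$ that collapses to $Y$ \emph{is} a regular neighborhood of $Y$), $N$ is itself a regular neighborhood of $Y$; taking a derived neighborhood $N'$ of $Y$ in $\text{int}\,N$, uniqueness gives $\text{cl}(N-N') \approx \partial N' \times [0,1]$, whence $N \searrow N'$. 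Your plan is instead the classical Whitehead--Zeeman--Hudson route: reduce to one elementary collapse and thicken it. That route is viable, and your bookkeeping is sound --- the induction over elementary collapses, the chaining via uniqueness of regular neighborhoods, and the decomposition $N_X = N_Y \cup H$ with $H$ the derived neighborhood of the attached ball $B$ are all correct.

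The problem is that the two assertions you label (i) and (ii), which you defer, are not technical side conditions; they \emph{are} the theorem, and the justifications you sketch for them do not stand. For (i) you cite ``a derived neighborhood of a collapsible polyhedron in a manifold is a ball,'' but in both this paper and Rourke--Sanderson that statement is a corollary \emph{of} the theorem you are proving (one proves it by collapsing $N(B)$ to a regular neighborhood of a point and then applying the proposition $M \searrow M_1 \Rightarrow M \approx M_1$), so invoking it here is circular; an independent proof for your particular $H$ (e.g.\ identifying the derived neighborhood of a simplex with a closed star) is genuine work you have not done. Worse, your final step asserts that \emph{any} ball pair $(H,G)$ with $G$ an $(n-1)$-ball in $\partial H$ can be identified with $(G \times [0,1], G \times \{0\})$. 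That identification says the pair is unknotted, which forces $\text{cl}(\partial H - G)$ to be a ball --- a Schoenflies-type statement that does not follow from the hypotheses and, in the relevant dimensions, is equivalent to open problems (for $G^4 \subset \partial H^5$ it amounts to the open PL Schoenflies/Poincar\'e questions in dimension $4$). The conclusion $H \searrow G$ is nevertheless true for an arbitrary such pair, but by a different argument: write $H \approx v \ast \partial H$ rel $\partial H$, collapse the cone pairs $(v\ast\sigma, \sigma)$ over the simplices $\sigma$ of $\text{cl}(\partial H - G)$ not in $\partial G$ in order of decreasing dimension, leaving $v \ast G$, and then collapse $v \ast G$ to $G$. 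So either you prove unknottedness of your specific pair $(H, H\cap N_Y)$ by the simplex-by-simplex analysis you explicitly decline to do, or you replace the product argument with the cone argument; as written, the proposal is a correct outline whose central steps are missing or wrongly justified.
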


Thus if $K$ is a spine of $M$ then for any regular neighborhood $N(K)$ of $K$ in M we have $N(K) \approx M.$

\begin{prop} If $X\searrow 0$ then a regular neighborhood of $X$ is a ball.
\end{prop}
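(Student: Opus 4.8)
The plan is to deduce the result by chaining the two results immediately preceding it, using the hypothesis $X \searrow 0$ to reduce everything to a regular neighborhood of a single point. Let $M$ be the ambient PL $n$-manifold and let $N$ be a regular neighborhood of $X$ in $M$. The hypothesis $X \searrow 0$ means precisely that $X$ collapses to a point $p$, i.e.\ $X \searrow \{p\}$. I would then apply the preceding Theorem with $Y=\{p\}$: since $X \searrow \{p\}$, the regular neighborhood $N$ of $X$ collapses to a regular neighborhood $N_0$ of $\{p\}$ in $M$, so that $N \searrow N_0$.

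Next I would identify $N_0$ as an $n$-ball independently of the statement being proved (to avoid circularity). By uniqueness of regular neighborhoods, $N_0$ is PL homeomorphic to the closed star of $p$ in any triangulation of $M$ having $p$ as a vertex. That star is the cone $p \cdot \text{lk}(p)$ on the link of $p$, and $\text{lk}(p)$ is a PL $(n-1)$-sphere when $p \in \intr M$ (or a PL $(n-1)$-ball when $p \in \partial M$); in either case the cone is a PL $n$-ball, so $N_0 \approx \mathbb{B}^n$. Finally, $N$ and $N_0$ are compact PL $n$-manifolds with $N \searrow N_0$, so the preceding Proposition yields a homeomorphism $N \approx N_0$. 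Composing gives $N \approx \mathbb{B}^n$, as desired.

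The only step carrying content beyond the quoted results is the identification of a regular neighborhood of a point with a ball; I expect this to be the main (though still routine) point. It relies on two standard facts from the regular neighborhood theory we are assuming: that regular neighborhoods of a fixed polyhedron in $M$ are unique up to PL homeomorphism, and that the link of an interior vertex is a PL sphere. Everything else is a direct application of the two immediately preceding results, so no further obstacle is anticipated.
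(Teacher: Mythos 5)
Your proof is correct: chaining the paper's two preceding results (the collapse theorem applied with $Y=\{p\}$, then the proposition that $N \searrow N_0$ implies $N \approx N_0$ for PL $n$-manifolds), together with the standard identification of a regular neighborhood of a point with the closed star of a vertex, hence a PL ball, is exactly the standard argument. Note that the paper itself gives no proof of this statement---it is quoted from Rourke--Sanderson \cite[pp.~40,41]{RoSa}---and your argument is essentially the one found in that reference, so there is nothing to fault here.
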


\begin{cor}
Suppose $M$ is a manifold with a spine $K$ and $K\searrow 0.$ Then $M$ is a ball.
\end{cor}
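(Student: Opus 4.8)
The plan is to chain together the regular neighborhood machinery quoted just above the Corollary. Since the hypotheses supply both a spine $K$ and the collapsibility of that spine, the proof reduces to identifying $M$ with a regular neighborhood of $K$ and then recognizing that neighborhood as a ball. No genuinely new construction is needed; the work is entirely in assembling the two Propositions and the regular neighborhood Theorem.

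First I would unwind the definition of spine: because $K$ is a spine of the compact PL manifold $M$, we have $K \subset \intr M$ together with $M \searrow K$. I then fix any regular neighborhood $N(K)$ of $K$ in $M$. The remark immediately following the regular neighborhood Theorem states that whenever $K$ is a spine of $M$ one has $N(K) \approx M$; this is the step that lets me replace $M$ by the explicitly chosen, well-behaved neighborhood $N(K)$ of its spine, which is what the remaining collapsing hypothesis can act on.

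Next I would apply the Proposition asserting that a regular neighborhood of a collapsible polyhedron is a ball. Since by hypothesis $K \searrow 0$, the chosen $N(K)$ is a PL ball, i.e. $N(K) \approx \mathbb{B}^n$. Combining the two homeomorphisms yields $M \approx N(K) \approx \mathbb{B}^n$, so $M$ is a ball, as claimed.

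The Corollary is thus a formal consequence of results already in hand, and there is no substantive obstacle. The only point deserving a word of care is that the two cited facts must be applied to the \emph{same} object $N(K)$: I use that $N(K) \approx M$ and that $N(K)$ is a ball simultaneously, which is legitimate precisely because regular neighborhoods are unique up to PL homeomorphism in Whitehead's theory. Since that uniqueness is built into the regular neighborhood Theorem and its corollary as quoted, I would simply remark on it rather than reprove it.
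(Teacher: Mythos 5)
Your proof is correct and is exactly the argument the paper intends: the Corollary is stated without proof precisely because it follows by chaining the remark that a spine's regular neighborhood $N(K)$ is homeomorphic to $M$ with the Proposition that a regular neighborhood of a collapsible polyhedron is a ball. Your care in applying both facts to the same $N(K)$ is appropriate and matches the paper's implicit reasoning.
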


\begin{prop} 
Suppose $W$ is a PL manifold and $A$ and $B$ are simplicial complexes $A,B \subset \intr W.$ If $W\searrow A \cup B$ with $A,B,A\cap B \searrow 0$ then $W$ splits into closed balls.
\end{prop}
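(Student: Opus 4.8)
The plan is to leverage the regular neighborhood machinery assembled just above the statement. Suppose $W \searrow A \cup B$ with $A, B, A \cap B \searrow 0$. Since $A \cup B \subset \intr W$ is a spine of $W$, the regular neighborhood theorem tells us that for any regular neighborhood $N$ of $A \cup B$ in $W$ we have $N \approx W$. So it suffices to construct a regular neighborhood of $A \cup B$ and exhibit it as a union of two closed balls meeting in a closed ball. First I would pass to a triangulation of $W$ in which $A$, $B$, and $A \cap B$ are subcomplexes, then replace these by their second derived neighborhoods, which are honest regular neighborhoods and behave well under intersection and union.

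The key structural step is to identify the three pieces. Let $N_A$, $N_B$ be regular neighborhoods of $A$ and $B$ respectively, chosen compatibly (e.g.\ as derived neighborhoods in a common subdivision) so that $N_A \cap N_B$ is a regular neighborhood of $A \cap B$ and $N_A \cup N_B$ is a regular neighborhood of $A \cup B$. The hypothesis $A \searrow 0$ together with the proposition that a regular neighborhood of a collapsible polyhedron is a ball gives $N_A \approx \mathbb{B}^n$; identically $N_B \approx \mathbb{B}^n$ from $B \searrow 0$, and $N_A \cap N_B \approx \mathbb{B}^n$ from $A \cap B \searrow 0$. Setting $W \approx N_A \cup N_B$ then displays $W$ as $\mathbb{B}^n \cup_{\mathbb{B}^n} \mathbb{B}^n$, which is exactly the definition of splitting into closed balls.

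The main obstacle is the compatibility of the three regular neighborhoods: I need $N_A$ and $N_B$ so that $N_A \cap N_B$ is genuinely a regular neighborhood of $A \cap B$ (and not some larger or degenerate set), and so that the triple $(N_A, N_B, N_A \cap N_B)$ simultaneously realizes the correct regular neighborhoods. This is the standard but delicate point that regular neighborhoods of subpolyhedra do not automatically intersect in regular neighborhoods of their intersection; the fix is to work throughout in a single derived subdivision, using the fact that second derived (star) neighborhoods of subcomplexes $A$, $B$ in a common triangulation satisfy $N(A) \cap N(B) = N(A \cap B)$ and $N(A) \cup N(B) = N(A \cup B)$ on the nose. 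I would invoke the uniqueness of regular neighborhoods to know that whichever compatible system I build is homeomorphic to the abstract pieces, and cite \cite{RoSa} for these derived-neighborhood intersection identities.

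One remaining check is that the common boundary gluing is along the full boundary sphere of $N_A \cap N_B$ in the appropriate sense, so that the decomposition literally matches the form $\mathbb{B}^n \cup_{\mathbb{B}^n} \mathbb{B}^n$ required by the definition of a closed splitter; this is immediate once the three pieces are balls and $N_A \cup N_B \approx W$, since $N_A \cap N_B$ is then the shared intersection $C$ in the notation $W = A \cup_C B$ of the definition.
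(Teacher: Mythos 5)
Your proposal is correct and follows essentially the same route as the paper's own proof: construct regular neighborhoods $N_A$, $N_B$ with respect to a common triangulation, use the fact that regular neighborhoods of collapsible complexes are PL balls to conclude $N_A$, $N_B$, and $N_A \cap N_B$ are balls, and then identify $N_A \cup N_B$ as a regular neighborhood of the spine $A \cup B$, hence homeomorphic to $W$. Your treatment is in fact slightly more careful than the paper's, which asserts without elaboration that $N_A \cap N_B$ is a regular neighborhood of $A \cap B$, whereas you flag this as the delicate point and resolve it via derived neighborhoods in a common subdivision.
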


\begin{proof} Let $A, B,$ and $C$ be such that $W \searrow A \cup_C B$ with $A,B,C \searrow 0.$ Regular neighborhoods of collapsible subcomplexes are piecewise linear balls. So given a triangulation of $W$ with $A$ and $B$ as subcomplexes, we construct (with respect to this triangulation) regular neighborhoods $N_A$ of $A$ and $N_B$ of $B$ and we have that $N_A$ and $N_B$ are balls and $N_A \cap N_B$ is a regular neighborhood of $C$ and as such is also a ball. $N_A \cup N_B$ is a regular neighborhood of $A \cup B$, a spine of $W$, so $N_A \cup N_B$ is homeomorphic to $W.$
\end{proof}

\subsection{The Dunce Hat}
The \emph{dunce hat}, $D,$ is defined as the quotient space obtained by identifying the edges of a triangular region as pictured in Figure \ref{D}. %It has a triangulation as shown in Figure \ref{D tri'd}. 

\begin{figure}[!ht]

\vspace{-3in}
\includegraphics[height=5in]{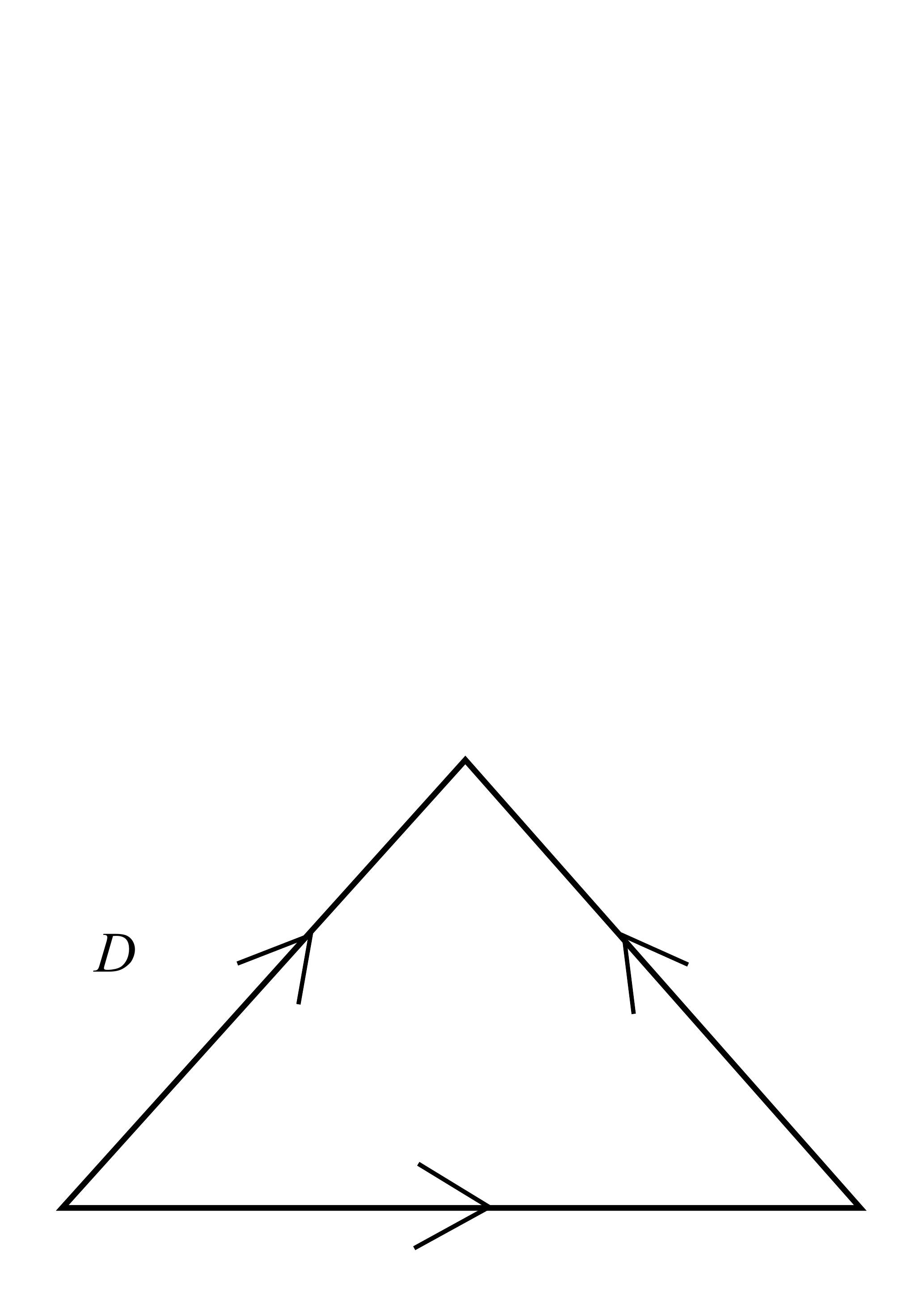}
\centering                 %USE TO ADD FIGURES
\caption{The Dunce Hat}
\label{D}
\end{figure}

The dunce hat was one of the first examples of a contractible but not collapsible simplicial complex. %It is contractible since the attaching map described above is homotopic to the identity and thus $D$ is homotopy equivalent to the the disc \cite[p.~16]{Hat}. It is not collapsible as it has no free face.
 A well know result by Zeeman is that the Mazur manifold has a dunce hat spine [Zee]. That observation will become clear in the following subsection, when we identify a spine of a slightly more complicated example.
 
To the best of our knowledge the following question is open.

\begin{ques}
\label{D split?}
Can the dunce hat be expressed as $D=A\cup_{C} B$ with $A,B,C \searrow 0?$ If so, the answer to question \ref{Ma Split?} is yes: $M\!a^4 \approx \mathbb{B}^4 \cup_{\mathbb{B}^4} \mathbb{B}^4.$
\end{ques}

\subsection{The Jester's Hat}
We define the \emph{Jester's hat}, $J$, to be the quotient space obtained from gluing the hexagonal region of the plane as in Figure \ref{J}. %Figure \ref{J triangulated} shows a triangulation of $J.$  
We can also realize this space by attaching a disc to a circle with the attaching map in Figure \ref{J attaching map}. %We describe said map here. Attach the first third, say $[0, 2\pi/3)$ of the disc boundary to the circle bijectively in the counterclockwise direction. Then map bijectively in the clockwise direction the next sixth of the disc boundary to the bottom half of the circle. Then map the next third all the way around the circle in the clockwise direction. Finally, sew the last sixth to the top half of the circle.
Since the attaching map is homotopic to the identity, $J$ is contractible \cite[p.~16]{Hat}. $J$ is not collapsible as it has no free edge.

\begin{figure}[!ht]
\vspace{-1.5in}
\includegraphics[height=5in]{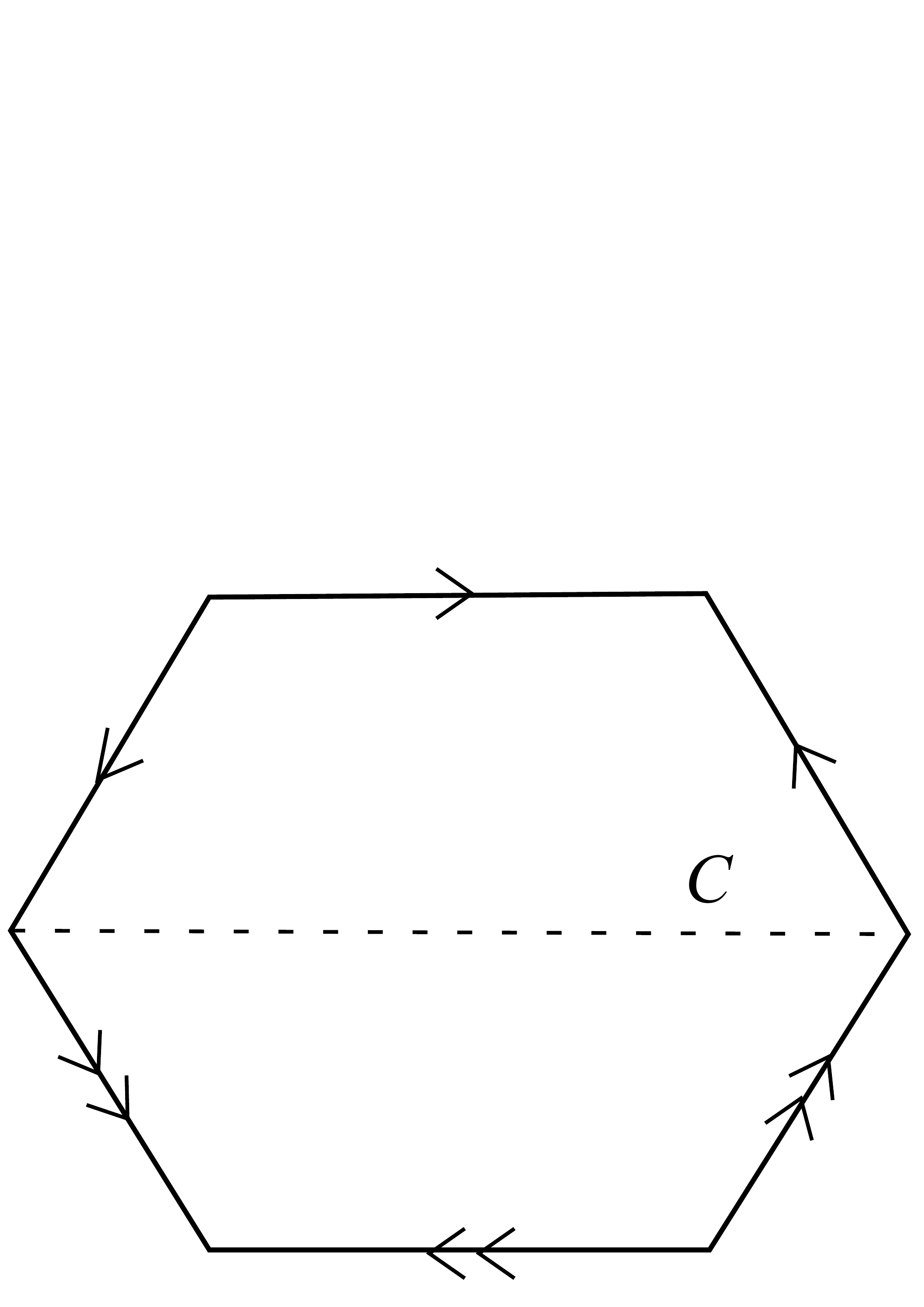} 
\centering                  %USE TO ADD FIGURES
\caption{The Jester's Hat}
\label{J}
\end{figure}

\begin{figure}[!ht]
\vspace{-2in}
\includegraphics[height=5in]{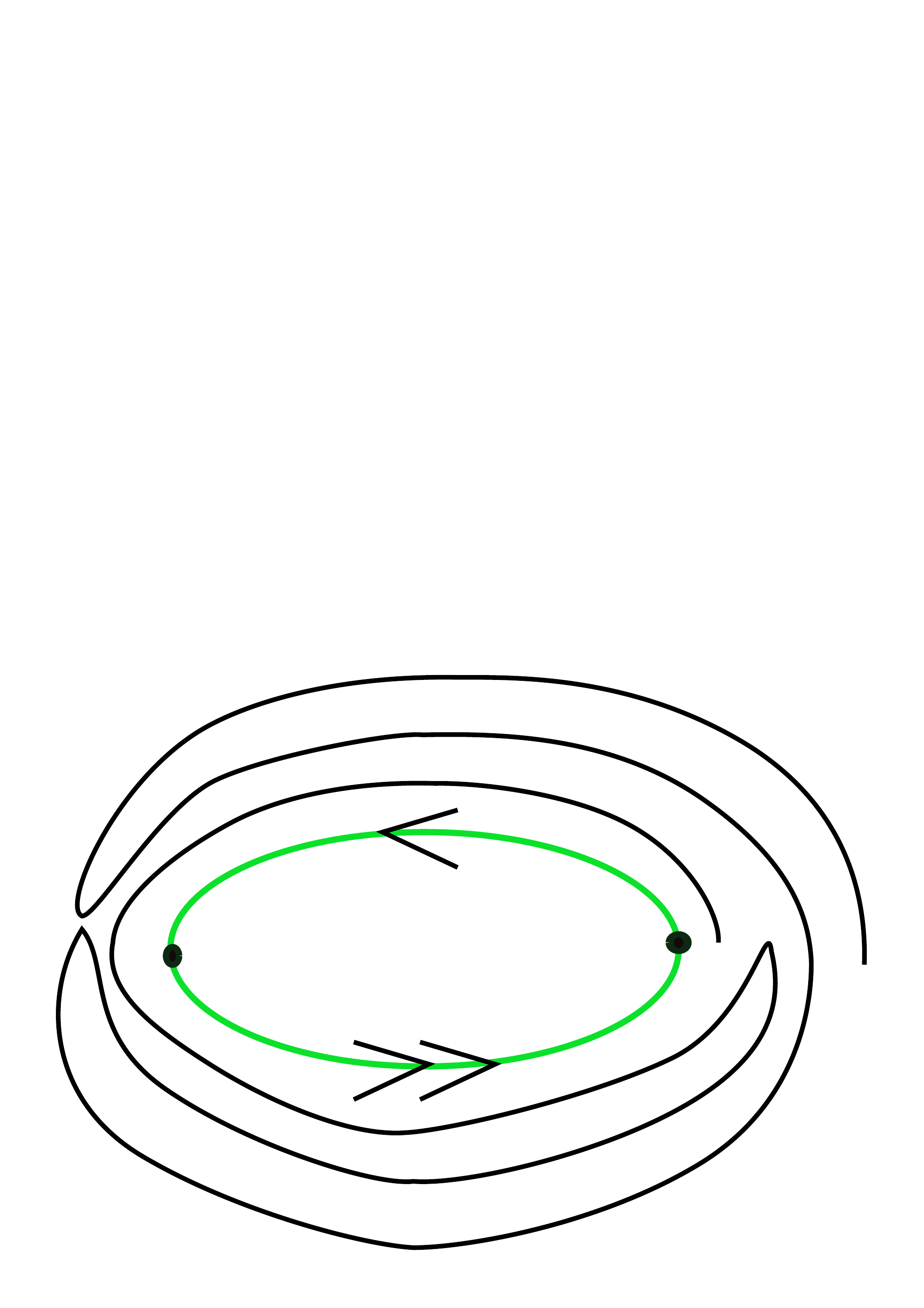} 
\centering                  %USE TO ADD FIGURES
\caption{Attaching map for $J$}
\label{J attaching map}
\end{figure}
 
By cutting $J$ open along the dashed arc in Fig.\ref{J}, one can see that $J$ can be decomposed into the union of collapsible subsets intersecting in $C,$ another collapsible subset.
\[J=A\cup_C B \;\text{with} \; A,B,C \searrow 0.\]
 
The interested reader can see \cite[pp.~17-18]{Spa} for details.

\begin{prop}
\label{M collapses to J}

Every Jester's manifold has a Jester's hat spine.
\end{prop}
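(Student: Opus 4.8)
The plan is to exhibit an explicit collapse of $M_\Psi$ onto a $2$-complex and then recognize that complex as the Jester's hat $J$. I would start from the handle decomposition $M_\Psi = S^1 \times \mathbb{B}^3 \cup_\Psi (\mathbb{B}^2 \times \mathbb{B}^2)$ and push everything onto the cores. The solid torus $S^1 \times \mathbb{B}^3$ is a regular neighborhood of its core circle $S^1 \times \{0\}$ and so collapses onto it. For the $2$-handle, write $\partial(\mathbb{B}^2 \times \mathbb{B}^2) = (S^1 \times \mathbb{B}^2) \cup (\mathbb{B}^2 \times S^1)$, where the first summand is the attaching region identified with $T$ via $\Psi$ and the second is free. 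Collapsing from the free portion gives the standard handle collapse
\[ \mathbb{B}^2 \times \mathbb{B}^2 \searrow (S^1 \times \mathbb{B}^2) \cup (\mathbb{B}^2 \times \{0\}). \]
Absorbing the attaching annulus into the solid torus and then collapsing the latter onto its core circle, we obtain
\[ M_\Psi \searrow (S^1 \times \{0\}) \cup_\phi e^2, \]
where $e^2$ is the core disc $\mathbb{B}^2 \times \{0\}$ of the handle and $\phi : \partial e^2 \to S^1 \times \{0\}$ is the map obtained by projecting the attaching curve $C$ to the core circle along the $\mathbb{B}^3$-fibers. By the regular neighborhood theory already recorded in the previous subsection, this spine is homeomorphic data for $M_\Psi$.

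Next I would compute the attaching map $\phi$. The resulting spine is a single circle with one $2$-cell attached, i.e. the mapping cone of $\phi$, so everything reduces to reading off the word that $\phi$ traces, together with the geometric backtracking of $C$. Here the defining feature of the Jester's manifold enters: $C = p^{-1}(\Gamma)$ is the preimage of the Mazur curve under the standard double cover $p$ of $S^1 \times \mathbb{B}^3$. For the Mazur curve itself this projection yields the dunce-hat attaching map, the degree-one map whose mapping cone is $D$, recovering the Zeeman dunce-hat spine of the Mazur manifold. For $C$ the double cover doubles both the circuit around the core and the backtracking pattern, and I would verify directly that the induced identification on the boundary hexagon is precisely the one defining $J$ in Figure \ref{J}, equivalently that $\phi$ coincides with the attaching map of Figure \ref{J attaching map}.

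The main obstacle is this last identification. The collapse producing a circle-plus-one-cell spine is routine handle theory; the genuine content is matching the combinatorics of $C$ against the hexagonal gluing of $J$. Concretely I would track $C$ as it runs over the two sheets of the double cover, record the sequence of oriented passes it makes around the core circle, and produce a cellular homeomorphism carrying the boundary word of the core disc onto the edge word of the Jester's hexagon. This is a finite, picture-driven check, carried out in detail in \cite[pp.~17--18]{Spa}, and once it is in hand the proposition follows at once, since the mapping cone of $\phi$ is by definition $J$.
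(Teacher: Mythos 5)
Your overall plan (collapse the handle decomposition onto a circle-with-one-$2$-cell complex, then match the attaching word against the hexagon defining $J$) superficially parallels the paper, but its key step is invalid, and it is precisely the step the paper's argument is designed to avoid. The move ``absorb the attaching annulus into the solid torus and then collapse the latter onto its core circle'' does not give a collapse of $M_\Psi$. A collapse is a finite sequence of elementary collapses of a complex onto a \emph{subcomplex}; it cannot move points. The core disc $e^2$ is attached along $C \subset \partial(S^1 \times \mathbb{B}^3)$, so no cell of $C$ is ever a free face in the union, and any complex onto which $M_\Psi$ collapses must contain $C$ sitting exactly where it is. Your proposed target $(S^1 \times \{0\}) \cup_\phi e^2$, with $\phi$ the fiberwise projection of $C$ to the core circle, does not contain $C$; in fact it is not a subcomplex of $M_\Psi$ at all but an abstract quotient complex (the attaching is by a map, not an inclusion). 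Hence the displayed claim $M_\Psi \searrow (S^1 \times \{0\}) \cup_\phi e^2$ is not even meaningful under the paper's definition of spine ($K$ a subcomplex of $M$ contained in $\intr M$ with $M \searrow K$), and ``routine handle theory'' cannot supply it. This is not a technicality to be smoothed over: it is the whole difficulty, and it is why Zeeman's dunce hat argument, which the paper adapts, is built on mapping cylinders.

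What the paper actually does is the following. First it crushes the two clasp intervals $I_1, I_2$ of the $S^1$ factor to points (Figure \ref{intervals and their clasps}), forming $g = f_1 \circ f_2 \circ \pi : S^1 \times S^2 \rightarrow S^1$ and $h = g|_C$. Then $M(g) \approx S^1 \times \mathbb{B}^3$, and Whitehead's theorem provides a genuine collapse of the mapping cylinder $M(g)$ onto the sub-mapping-cylinder $M(h)$, a $2$-complex which \emph{contains} $C$ (as the top of the cylinder) and interpolates between $C$ and the base circle. Combined with the standard collapse of the $2$-handle onto its core union attaching tube, this yields
\[
M_\Psi \searrow M(h) \cup_{\Psi |^C} B^2,
\]
a genuine subcomplex, which Figure \ref{M(h)} identifies with $J$ (a disc glued to the top of the cylinder absorbs the cylinder collar, so this complex is a disc attached to a circle via $h$). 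Note also that even at the level of identifying the attaching map, your raw projection $\pi|_C$ is not the identification map of Figure \ref{J attaching map}; only after crushing the clasps does the map become the one defining $J$, and attaching a $2$-cell along a merely homotopic map would give a complex homotopy equivalent to $J$ but not obviously homeomorphic to it. If you replace ``collapse the solid torus to its core circle'' by ``collapse $M(g)$ to $M(h)$ via Whitehead,'' the rest of your outline can be carried through.
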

\begin{proof} The proof is analogous to Zeeman's proof that Mazur's manifold has a dunce hat spine \cite{Zee}. Let $M=M_\Psi$ be a Jester's manifold for a given framing $\Psi.$
We divide the $S^1$ of the $S^1 \times S^2$ in which $C$ resides into four arcs $I_1,I_2,I_3,$ and $I_4$ so that $I_1 \times S^2$ and $I_2 \times S^2$ each contain a ``clasp" of $C$ (see Figure \ref{intervals and their clasps}).
 
\begin{figure}[!ht]
\centering 
\vspace{-5.5in}
\includegraphics[height=8in]{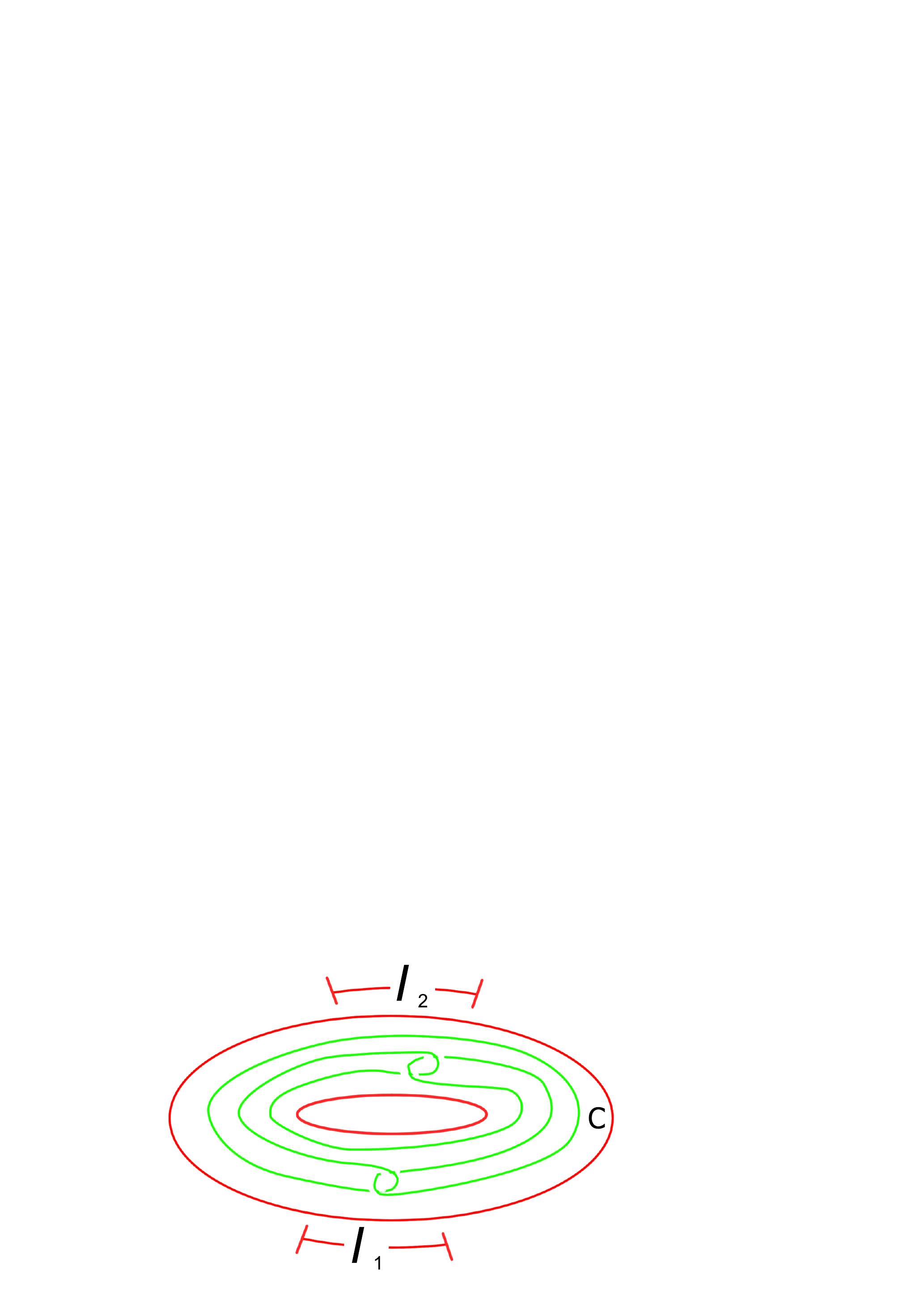}
                
						%USE TO ADD FIGURES

\caption{Intervals of $S^1$ and their clasps}
\label{intervals and their clasps}
\end{figure}

For $i=1,2$, let $f_i:S^1 \rightarrow S^1$ be the map that shrinks $I_i$ to a point, say $p_i,$ and is a homeomorphism on the complement of $I_i.$ %(See figure.) 
Further let $\pi:S^1 \times S^2 \rightarrow S^1$ be projection onto the first factor, $j$ be the inclusion  $C \hookrightarrow S^1 \times S^2$, ${g=f_1\circ f_2 \circ \pi:S^1 \times S^2 \rightarrow S^1}$ and $h=g \circ j.$ Let $M(g)$ and $M(h)$ be the mapping cylinders of $g$ and $h$, respectively. That is, 
\[M(g)=[(S^1 \times S^2 \times [0,1]) \sqcup S^1]/\sim_g \ \ \text{and} \ \
 M(h)=[(C \times [0,1]) \sqcup S^1]/\sim_h\]
where $\sim_g$ and $\sim_h$ are generated by $(x,0) \sim_g g(x)$ and $(y,0) \sim_h h(y)$, respectively.  

$M(g)$ is homeomorphic to $S^1 \times \mathbb{B}^3$ \cite[p.~19]{Spa}. Since $h=g|_C$, $M(h)$ is a subcylinder of $M(g)$ and by a result of J.H.C. Whitehead $M(g) \searrow M(h)$  \cite{Whi}. Further, the 2-handle $h^{(2)}$ viewed as $\mathbb{B}^2 \times \mathbb{B}^2$ in our construction of $M$ collapses onto its core union the attaching tube: ${(\mathbb{B}^2 \times \{0\})\cup(S^1 \times \mathbb{B}^2).}$ 
Follow this with the collapse of $M(g)$ onto $M(h)$ to obtain the collapse:
 
\[M = S^1 \times \mathbb{B}^3 \cup_{\Psi} \mathbb{B}^2 \times \mathbb{B}^2 \searrow S^1 \times \mathbb{B}^3 \cup_\Psi [(\mathbb{B}^2 \times \{0\})\cup(S^1 \times \mathbb{B}^2)] \searrow M(h) \cup_{\Psi |^C} B^2.\] 
But from the illustration of $M(h)$ (Figure \ref{M(h)}) we can see that $M(h) \cup_{\Psi |^C} B^2$ is our Jester's hat $J$.
 
\end{proof}

\begin{figure}[!ht]
\vspace{-2.5in}
\includegraphics[height=5in]{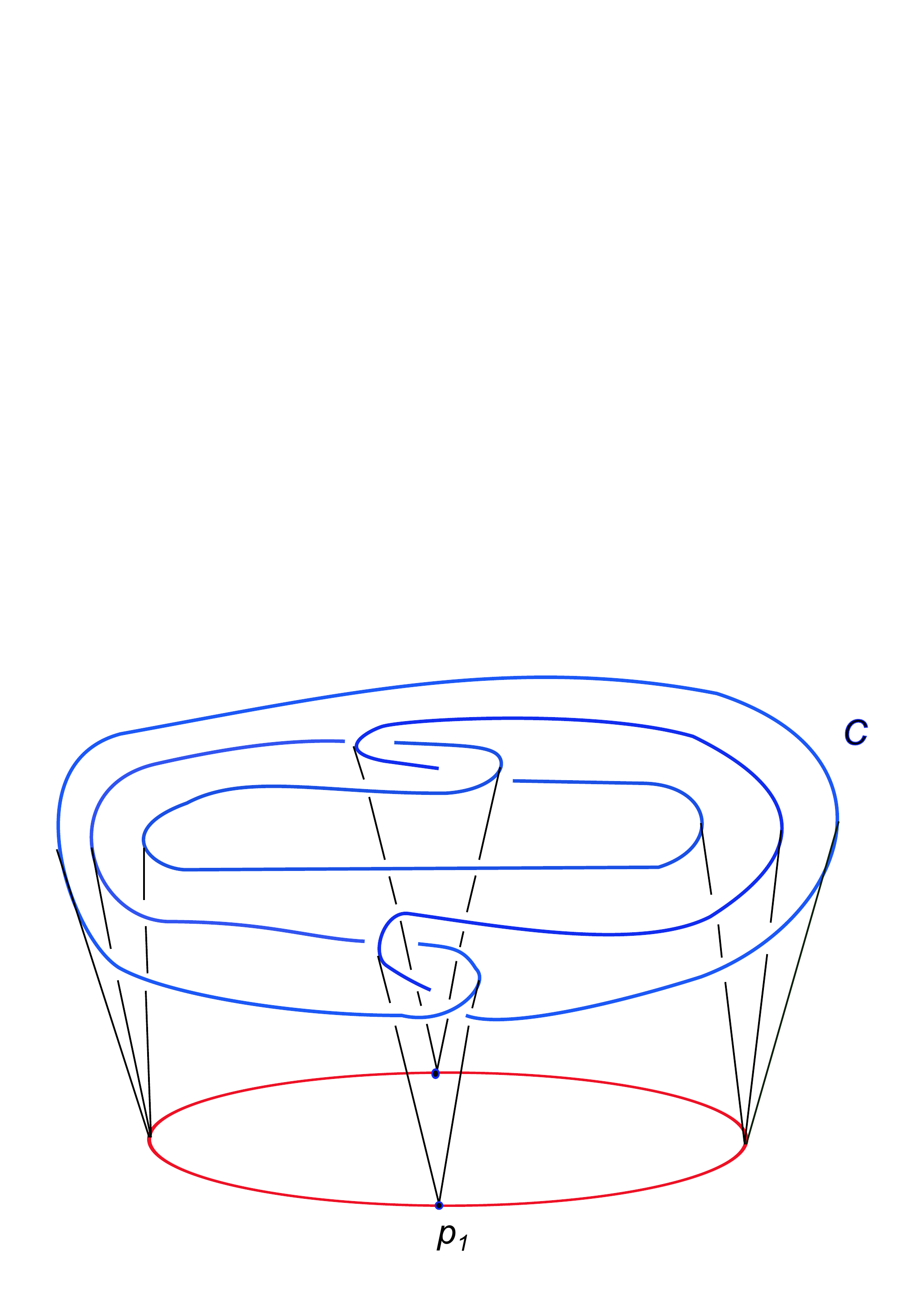}                   %USE TO ADD FIGURES
\centering
\caption{The Mapping Cylinder of $h$}
\label{M(h)}
\end{figure}

\begin{cor} The Jester's manifolds split into closed $4$-balls.
\end{cor}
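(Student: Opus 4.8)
The plan is to chain together the two facts already established in this section. First I would invoke Proposition~\ref{M collapses to J}, which shows that an arbitrary Jester's manifold $M = M_\Psi$ collapses to a Jester's hat spine $J$, so that $M \searrow J$. Next I would recall the decomposition $J = A \cup_C B$ with $A,B,C \searrow 0$ obtained by cutting $J$ open along the dashed arc of Figure~\ref{J}. Composing these collapses gives $M \searrow A \cup_C B$, in which each of the three pieces is collapsible.

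With this in hand the conclusion is immediate from the PL splitting criterion proved earlier in this subsection (the proposition asserting that if a PL manifold $W$ collapses to $A \cup B$ with $A,B,A\cap B \searrow 0$, then $W$ splits into closed balls). Taking $W = M$ and $A \cap B = C$, we conclude that $M \approx \mathbb{B}^4 \cup_{\mathbb{B}^4} \mathbb{B}^4$: regular neighborhoods $N_A$ and $N_B$ of $A$ and $B$ are balls because $A$ and $B$ are collapsible, their intersection $N_A \cap N_B$ is a regular neighborhood of $C$ and hence also a ball, and $N_A \cup N_B$ is a regular neighborhood of the spine $A \cup_C B$, so it is homeomorphic to $M$.

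The only genuine care required is bookkeeping about categories and triangulations. I would fix a PL structure on $M$ (available since $M$ is assembled from $S^1 \times \mathbb{B}^3$ and a $2$-handle, both PL) and choose a triangulation of $M$ in which the spine $J$, together with its subcomplexes $A$, $B$, and $C$, appears as a subcomplex lying in $\intr M$; this is what licenses the regular-neighborhood machinery. The main (if modest) obstacle is verifying that the cut producing $A \cup_C B$ genuinely yields individually collapsible \emph{simplicial} pieces compatible with a single triangulation of $J$ --- that is, that $A$, $B$, and $C$ can all be realized simultaneously as subcomplexes, each collapsing to a point. Since the collapses of $A$, $B$, and $C$ are explicit and the decomposition is the one already described for $J$, no new ideas beyond Proposition~\ref{M collapses to J} and the splitting criterion are needed, and the corollary follows.
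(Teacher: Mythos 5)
Your proposal is correct and is precisely the argument the paper intends: the corollary follows by chaining Proposition~\ref{M collapses to J} ($M \searrow J$) with the decomposition $J = A \cup_C B$, $A,B,C \searrow 0$, and then applying the earlier proposition that a PL manifold collapsing to such a union splits into closed balls. Your added remarks on realizing $A$, $B$, $C$ as subcomplexes of a common triangulation are a reasonable bit of care, but no different route is taken.
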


\begin{remark} While we now know that the $M_\Psi\emph{'s}$ split into closed balls, we have not demonstrated that any $M_\Psi$ is not just a ball. To deal with that issue we will modify the construction.
\end{remark}

\section{More Jester's Manifolds}

For this section we let $M=M_\Psi$ be an arbitrary Jester's manifold. Recall $\Psi$ is the framing $\Psi: S^1 \times \mathbb{B}^2 \rightarrow T$ and $T$ is a tubular neighborhood of the curve $C$ in $\partial (S^1 \times \mathbb{B}^3).$

\subsection{Pseudo 2-handles} 

Using $M$ as a model, we apply a construction due to Wright to obtain 
a collection of manifolds $\{W_i\},$ as follows \cite{Wri}. To construct $W_i,$ we start with the $S^1 \times \mathbb{B}^3$ of the Jester's manifold construction and attach a ``pseudo 2-handle", a $\mathbb{B}^4,$ along $K_i,$ the connected sum of $i$ trefoils in the boundary of $\mathbb{B}^4,$ to the curve $C$ in $\partial(S^1 \times \mathbb{B}^3).$ (See Figure \ref{pseudo}.) That is, %\ref{M(h)}
\[W_i = S^1 \times \mathbb{B}^3 \cup_{\Psi_i} H.\]
Here $\Psi_i$ is a homeomorphism from a tubular neighborhood $T_i$ of $K_i$ in $\partial \mathbb{B}^4$ to $T.$ 

We define the \emph{core of the pseudo handle} to be the cone of $K_i$ with cone point the center of $\mathbb{B}^4.$ The core is then a 2-disc whose interior lies in $\intr \mathbb{B}^4.$

\begin{figure}[!ht]
\vspace{-5.5in}
\includegraphics[height=7in]{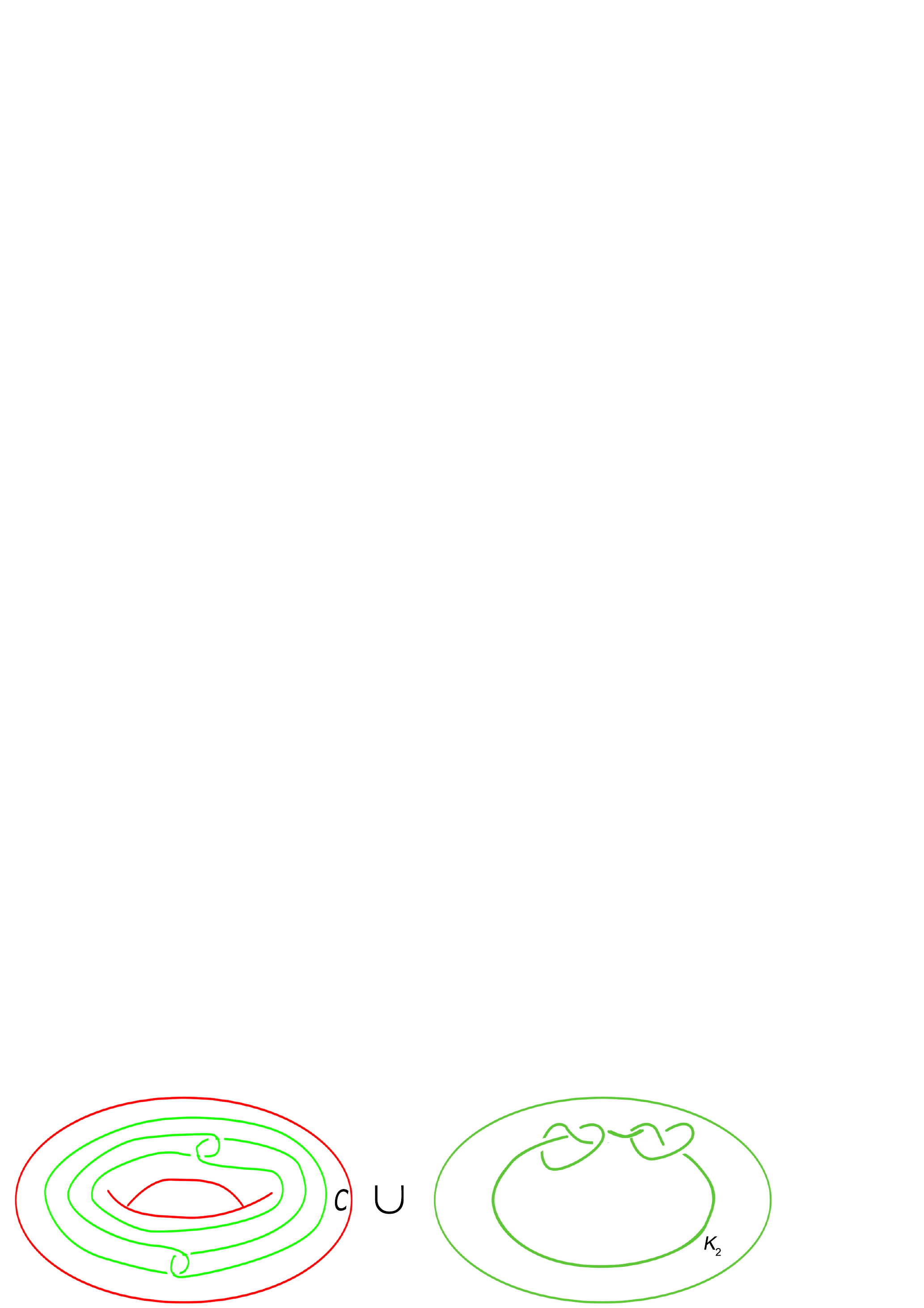} 
%\label{pseudo}                  %USE TO ADD FIGURES
\centering
\caption{$S^1 \times \mathbb{B}^3$ union a degree 2 pseudo $2$-handle}
\label{pseudo}
\end{figure}

\begin{prop} Each $W_i \searrow J.$
\end{prop}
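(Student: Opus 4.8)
The plan is to mimic, as closely as possible, the collapse $M_\Psi \searrow J$ established in Proposition~\ref{M collapses to J}, replacing the genuine $2$-handle $\mathbb{B}^2 \times \mathbb{B}^2$ by the pseudo $2$-handle $H \approx \mathbb{B}^4$ and checking that the relevant collapses still go through. The key observation is that $J$ was recovered as $M(h) \cup_{\Psi|^C} \mathbb{B}^2$, where $M(h)$ is the mapping cylinder of $h = g\circ j$, and this piece depends only on the curve $C$ in $\partial(S^1\times\mathbb{B}^3)$ and the shrinking map $g$, \emph{not} on how the handle interior is built. So I would first carry over verbatim the collapse $S^1\times\mathbb{B}^3 \searrow M(h)$ furnished by Whitehead's theorem, since the $S^1\times\mathbb{B}^3$ factor and the attaching curve $C$ are identical in the $W_i$ construction. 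What changes is only the handle half.

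The heart of the argument is therefore to show that the pseudo $2$-handle $H\approx\mathbb{B}^4$ collapses onto its core union its attaching tube, exactly as the genuine handle did when I wrote $\mathbb{B}^2\times\mathbb{B}^2 \searrow (\mathbb{B}^2\times\{0\})\cup(S^1\times\mathbb{B}^2)$. Here the core is the cone $cK_i$ on the knot $K_i$ (the connected sum of $i$ trefoils) from the center of $\mathbb{B}^4$, and the attaching region is the tubular neighborhood $T_i$ of $K_i$ in $\partial\mathbb{B}^4$, which $\Psi_i$ identifies with $T$. Concretely I would take a cone structure on $\mathbb{B}^4$ from its center and observe that $\mathbb{B}^4$ collapses onto the cone on any subpolyhedron of $\partial\mathbb{B}^4$ together with a collar; choosing that subpolyhedron to be $K_i$ gives the collapse of $H$ onto $cK_i \cup T_i$. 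Since $cK_i$ is a cone it is collapsible, and coning is precisely what makes the attaching map on $\partial(\text{core}) = K_i$ irrelevant to the homotopy/collapse type of the resulting spine: after collapsing, the core contributes a single $2$-cell attached along $C$ via $\Psi_i|^C$, reproducing the $\mathbb{B}^2$ of the $J = M(h)\cup\mathbb{B}^2$ description. Composing the handle collapse with the collapse $S^1\times\mathbb{B}^3\searrow M(h)$ yields
\[
W_i = S^1\times\mathbb{B}^3 \cup_{\Psi_i} H \;\searrow\; S^1\times\mathbb{B}^3 \cup_{\Psi_i} (cK_i\cup T_i) \;\searrow\; M(h)\cup_{\Psi_i|^C}\mathbb{B}^2 = J,
\]
where the last equality is the same identification of $M(h)\cup\mathbb{B}^2$ with the Jester's hat used in Proposition~\ref{M collapses to J}.

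The step I expect to be the main obstacle is making precise that collapsing $H\approx\mathbb{B}^4$ onto the cone on the \emph{knotted} curve $K_i$ plus its tube still produces, after the $M(g)\searrow M(h)$ collapse, a space genuinely homeomorphic (indeed PL-identical as a spine) to $J$ rather than merely homotopy equivalent to it. The subtlety is that the spine $J$ remembers only the attaching \emph{circle} $C$ and the way the core $2$-cell is glued along it, and all of that data lives in $\partial(S^1\times\mathbb{B}^3)$; the knotting of $K_i$ is entirely absorbed into the interior of the cone $cK_i$, which collapses away to a point once the surrounding handle is gone. I would therefore be careful to arrange the triangulations so that $cK_i$ is a subcomplex and its interior is genuinely free after the handle is collapsed, and to cite the PL collapsing results (the regular-neighborhood and mapping-cylinder theorems quoted earlier) to legitimize each elementary step. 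Once that bookkeeping is in place the conclusion $W_i\searrow J$ is immediate from transitivity of collapse.
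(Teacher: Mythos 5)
Your proposal is correct and takes essentially the same approach as the paper: rerun the proof of Proposition~\ref{M collapses to J}, with the pseudo $2$-handle $H$ collapsing onto its core $cK_i$ union the attaching tube $\Psi_i(T_i)$, then apply the Whitehead collapse $M(g)\searrow M(h)$ (carrying the tube onto the attaching circle $C$), so that the cone on $K_i$ survives as a $2$-disc attached along $C$ and the resulting spine is $J$. The only difference is one of detail: the paper simply asserts the collapse of $H$ onto core union attaching tube, whereas you sketch a justification via the cone structure on $\mathbb{B}^4$; that is extra care, not a different argument.
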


\begin{proof} The same proof as for every Jester's manifold collapses to $J$ (Proposition \ref{M collapses to J}) goes through with the pseudo 2-handle collapsing to its core. %, a  disc $B^2.$ 
$H$ collapses to its core union its attaching tube defined as $\Psi_i(T_i)$. $M(g)$ again collapses to $M(h)$ with the attaching tube collapsing to the attaching sphere: $\Psi_i(K_i)=C$. 

\end{proof}

\begin{cor} Each $W_i=\mathbb{B}^4 \cup_{\mathbb{B}^4} \mathbb{B}^4.$
\end{cor}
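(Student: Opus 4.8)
The plan is to deduce the corollary immediately from the proposition just proved together with the PL splitting criterion established earlier in the section on spines, so that no new geometry is required. By the preceding proposition, each $W_i \searrow J$, the Jester's hat. Recall from the discussion of the Jester's hat that $J$ admits a decomposition $J = A \cup_C B$ with $A,B,C \searrow 0$, where $C = A \cap B$. Since collapse is transitive, I would chain the collapse $W_i \searrow J$ with this identification to obtain $W_i \searrow A \cup_C B$, with all three of $A$, $B$, and $C = A \cap B$ collapsible.

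With this chain in hand I would invoke the earlier splitting Proposition: if $W$ is a PL manifold and $A,B \subset \intr W$ are simplicial complexes with $W \searrow A \cup B$ and $A,B,A\cap B \searrow 0$, then $W$ splits into closed balls. Each $W_i$ is a compact PL $4$-manifold, being $S^1 \times \mathbb{B}^3$ with a $\mathbb{B}^4$ attached along a tubular neighborhood $T_i$ of the knot $K_i$, and the explicit collapse of the previous proposition realizes the spine $A \cup_C B = J$ inside $\intr W_i$. Choosing a triangulation of $W_i$ in which $A$ and $B$ are subcomplexes, the Proposition produces regular neighborhoods $N_A$ and $N_B$ that are $4$-balls whose intersection $N_A \cap N_B$ is a regular neighborhood of $C$ and hence also a $4$-ball, with $N_A \cup N_B \approx W_i$. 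This yields $W_i = \mathbb{B}^4 \cup_{\mathbb{B}^4} \mathbb{B}^4$, as claimed.

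The argument is formally identical to the one already used to show that the genuine Jester's manifolds split into closed balls, so I do not expect a substantive obstacle. The only points needing any verification are that the pseudo-handle construction keeps us in the PL category, so that regular neighborhood theory applies, and that the collapse $W_i \searrow J$ exhibits $J$ as a subcomplex of $\intr W_i$ with a triangulation in which $A$ and $B$ are subcomplexes. Both follow directly from the definition of $W_i$ (a PL gluing of $\mathbb{B}^4$ to $S^1 \times \mathbb{B}^3$ along $\Psi_i$) and from the explicit collapse in the preceding proposition, so the content of the corollary is entirely carried by the two results it cites.
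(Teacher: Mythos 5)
Your proposal is correct and matches the paper's (implicit) argument exactly: the paper states this corollary without proof precisely because, as you observe, it follows by combining $W_i \searrow J$ with the decomposition $J = A \cup_C B$ ($A,B,C \searrow 0$) and the earlier splitting proposition for PL manifolds collapsing to such a union --- the same route already used for the corollary that the original Jester's manifolds split into closed $4$-balls. No discrepancies to report.
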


\subsection{A Theorem of Wright}
\label{Theorem of Wright}

Applying the following theorem will yield an infinite collection of distinct $W_i$. Before we state the theorem we'll need some definitions.

\begin{defn}

A $3$-manifold is \emph{irreducible} if every embedded $S^2$ bounds a $\mathbb{B}^3.$

\end{defn}

\begin{defn}

A torus $S$ in a $3$-manifold $X$ is said to be \emph{incompressible in $X$} if the homomorphism induced by inclusion $\pi_1(S) \rightarrow \pi_1(X)$ is injective.

\end{defn}

\begin{defn}

A group $G$ is \emph{indecomposable} if for all subgroups $A, B$ such that ${G \approx A\ast B}$, either $A=1$ or $B=1$. (That is, $G$ contains no nontrivial free factors.)

\end{defn}

\begin{thm} 
\label{Wright}
\cite{Wri} Suppose $X$ is a compact $4$-manifold obtained from the $4$-manifold $N$ by adding a $2$-handle $H$. If \emph{cl}$(\partial X-H)$ is an orientable irreducible $3$-manifold with incompressible boundary, then there exists a countably infinite collection of compact $4$-manifolds $M_i$ such that\\
(1) $\pi_1 (\partial M_i) \ncong \mathbb{Z}$ and is indecomposable\\
(2) $\pi_1(\partial M_i) \ncong \pi_1(\partial M_j)$ for $i\neq j$ and hence, \emph{int}$(M_i)$ is not homeomorphic to \emph{int}$(M_j).$
\end{thm}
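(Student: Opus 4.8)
The statement to be proved is Theorem~\ref{Wright}, which is attributed to Wright~\cite{Wri}. Since this is a cited external result rather than one the author proves, I will sketch the strategy one would use to establish such a theorem, drawing on the structure suggested by the hypotheses.

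\textbf{Overall approach.} The plan is to produce the countable family $\{M_i\}$ by varying the \emph{framing} with which the $2$-handle $H$ is attached to $N$, exactly as the paper varies framings $\Psi_i$ in constructing the $W_i$. First I would analyze the boundary $3$-manifold $\partial X$. Writing $Y = \text{cl}(\partial X - H)$, the boundary decomposes as $\partial X = Y \cup_{\partial} (\text{trace of the handle on the boundary})$, so that $\partial M_i$ is obtained from $Y$ by Dehn filling along the attaching torus, with the filling slope determined by the framing. The hypotheses — $Y$ orientable, irreducible, with incompressible boundary — are precisely the conditions needed to apply the theory of Dehn filling and to control the resulting fundamental groups via the van Kampen theorem and the theory of JSJ/graph-of-groups decompositions.

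\textbf{Key steps in order.} First I would set up $\pi_1(\partial M_i)$ as a quotient: by van Kampen, gluing the solid-torus filling of slope $\gamma_i$ onto $Y$ gives $\pi_1(\partial M_i) \cong \pi_1(Y) / \text{nc}\{\gamma_i\}$, where $\gamma_i \in \pi_1(\partial Y)$ is the filling curve and the incompressibility hypothesis guarantees $\pi_1(\partial Y) \hookrightarrow \pi_1(Y)$ injectively. Second, I would invoke Dehn filling technology (e.g. the Gromov–Thurston $2\pi$-theorem or Thurston's hyperbolic Dehn surgery theorem) to argue that for all but finitely many slopes $\gamma_i$ the filled manifold $\partial M_i$ is irreducible with infinite, non-cyclic, indecomposable fundamental group; irreducibility plus the absence of essential spheres yields indecomposability of $\pi_1$ by the Kneser–Milnor prime decomposition (a nontrivial free-product splitting of $\pi_1$ of an irreducible $3$-manifold would force an essential sphere, contradicting irreducibility). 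This gives conclusion~(1). Third, for conclusion~(2) I would show that infinitely many of these groups are pairwise non-isomorphic, extracting an infinite subfamily. One clean way is to arrange that the first homology $H_1(\partial M_i) \cong \pi_1(\partial M_i)^{\mathrm{ab}}$, or some other computable isomorphism invariant (such as the order of a torsion subgroup, or the image under a homomorphism to a fixed reference group as in the $\text{Isom}(\mathbb{H}^2)$ maps used in Subsection~\ref{sec:MazMan}), takes infinitely many distinct values as $i$ ranges over the slopes. Passing to a subsequence on which the invariant is strictly monotone then yields $\pi_1(\partial M_i) \ncong \pi_1(\partial M_j)$ for $i \neq j$. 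Finally, distinct boundaries force distinct interiors: since $\text{int}(M_i)$ is an open manifold whose end is collared by $\partial M_i$, the boundary fundamental group is recoverable from $\text{int}(M_i)$ (as a fundamental group at infinity / pro-$\pi_1$ invariant), so $\pi_1(\partial M_i) \ncong \pi_1(\partial M_j)$ implies $\text{int}(M_i) \not\approx \text{int}(M_j)$.

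\textbf{Main obstacle.} The hard part will be the uniform control over infinitely many fillings simultaneously: one must show not merely that a \emph{single} non-cyclic indecomposable group arises, but that infinitely many \emph{pairwise distinct} such groups do, and this requires either a genuinely infinite family of ``good'' slopes (handled by the finiteness-of-exceptional-slopes results) together with an invariant that genuinely separates them. The separation step is the genuinely delicate one, because different Dehn fillings can easily produce isomorphic fundamental groups; ruling this out forces one to identify an invariant that is both computable and provably unbounded along the family, which is where the specific geometry of the attaching curve $K_i$ (the connected sum of $i$ trefoils, in the paper's application) would enter to guarantee the requisite growth.
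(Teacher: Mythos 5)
Your proposal takes a genuinely different route from Wright's, and the route has a gap at its central step that the paper itself warns about. You fix the attaching curve and vary the \emph{framing}, so that the $\partial M_i$ become Dehn fillings of $Y=\text{cl}(\partial X-H)$ along a family of slopes, and you then invoke hyperbolic Dehn surgery or the $2\pi$-theorem to get irreducibility and indecomposability for all but finitely many slopes. But the hypotheses give only that $Y$ is orientable, irreducible, with incompressible boundary --- not that $Y$ is hyperbolic --- so neither theorem applies ($Y$ could be Seifert fibered or toroidal); one would have to substitute Gordon--Luecke/CGLS-type finiteness results, and even then the real crux remains: pairwise distinguishing infinitely many fillings. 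That separation step, which you acknowledge but do not resolve, fails for your proposed invariant in exactly the situation the theorem is built for: in the paper's application the attaching curve $C$ has algebraic index $1$ in $S^1\times S^2$, so \emph{every} framing yields a homology $3$-sphere boundary and $H_1(\partial M_i)$ is constant. The paper's Remark in Section 3 records that the author originally attempted precisely this framing-variation strategy and abandoned it because ``the group theoretic calculations proved too complicated.''

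Wright's construction \cite{Wri}, as described in the paper, varies the \emph{knot} rather than the slope: $M_i$ is obtained by attaching to $N$ a pseudo $2$-handle, i.e.\ a $\mathbb{B}^4$ glued along a tubular neighborhood of $K_i\subset\partial\mathbb{B}^4$, the connected sum of $i$ trefoils, identified with the same tube $T$. Then $\partial M_i = Y\cup_{T^2} E(K_i)$, where $E(K_i)$ is the knot exterior and the gluing torus is incompressible on both sides (incompressibility in $Y$ is the hypothesis of Theorem \ref{Wright}, and it is what Claim \ref{bdry T incompressible} verifies in the proof of Theorem \ref{infinite 4-splitters}). Van Kampen then exhibits $\pi_1(\partial M_i)$ as an amalgamated product $\pi_1(Y)\ast_{\mathbb{Z}^2}\pi_1(E(K_i))$, which contains $\mathbb{Z}^2$, hence is not $\mathbb{Z}$, and is indecomposable because $\partial M_i$ is irreducible; since the trefoil-exterior content grows with $i$, Wright can extract a subsequence whose boundary groups are pairwise non-isomorphic. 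Your closing intuition that the trefoils ``would enter'' is right, but in Wright's argument they are the varying parameter itself, not a device for certifying growth of an invariant along a family of slopes. The one step you do carry out correctly is the last one: distinct $\pi_1(\partial M_i)$ forces $\intr(M_i)\not\approx\intr(M_j)$ because the boundary group is recovered from the interior as its fundamental group at infinity.
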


Wright constructs the infinite collection of manifolds $\{M_i\}$ of the theorem as follows. For each $i=1,2,...$ he constructs a manifold by attaching to $N$ a psuedo 2-handle along $K_i$. From this sequence he exhibits a subsequence $\{M_{i_j}\}$ each term of which has a distict boundary.

For the proof of the following theorem we'll employ the Loop Theorem \cite[p.~101]{Rol}.

\begin{thm}\emph{(Loop Theorem)} If $X$ is a 3-manifold with boundary and the induced inclusion homomorphism $\pi_1(\partial X) \rightarrow \pi_1(X)$ has  nontrivial kernel, then there exists an embedding of a disc $D$ in $X$ such that $\partial D$ lies in $\partial X,$ and represents a nontrivial element of $\pi_1(\partial X)$. 
\end{thm}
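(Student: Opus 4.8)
The plan is to prove the Loop Theorem by Papakyriakopoulos's \emph{tower construction}, which upgrades an arbitrary singular disk to an embedded one. Since $\ker(\pi_1(\partial X) \to \pi_1(X))$ is nontrivial, there is a loop $\gamma \subset \partial X$ that is essential in $\partial X$ but bounds in $X$; hence there is a map $f_0 \colon (D^2, \partial D^2) \to (X, \partial X)$ with $f_0|_{\partial D^2}$ representing a nontrivial class in $\pi_1(\partial X)$. After a general-position perturbation I may assume the singular set of $f_0$ consists of finitely many double arcs and double circles together with isolated triple points, and that $f_0$ is an embedding near $\partial D^2$.

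First I would build the tower. Let $X_0$ be a regular neighborhood of $f_0(D^2)$ in $X$, so that $f_0$ factors through the compact $3$-manifold $X_0$. If $H_1(X_0; \mathbb{Z}/2) \neq 0$, choose a connected double cover $p_1 \colon \widetilde{X}_0 \to X_0$; since $D^2$ is simply connected, $f_0$ lifts to $\widetilde{f}_0 \colon D^2 \to \widetilde{X}_0$, and I set $X_1$ to be a regular neighborhood of $\widetilde{f}_0(D^2)$. Iterating produces a tower $X_0 \xleftarrow{p_1} X_1 \xleftarrow{p_2} \cdots \xleftarrow{p_n} X_n$ of (restricted) double covers, each carrying a lifted disk $f_i$. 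Finiteness is the first technical point: passing to a nontrivial double cover and restricting to a regular neighborhood of the connected lifted disk strictly drops a suitable complexity, such as $\dim_{\mathbb{Z}/2} H_1(X_i; \mathbb{Z}/2)$, so the process halts at some $X_n$ with $H_1(X_n; \mathbb{Z}/2) = 0$.

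At the top I would straighten the disk. Because $X_n$ admits no nontrivial connected double cover, the double-curve surgery argument runs cleanly: each double circle of $f_n$ pulls back to a pair of circles in $D^2$, and I cut and paste along an innermost one to reduce the singular set, while the vanishing of $H_1(X_n; \mathbb{Z}/2)$ is exactly what guarantees that resolving all double curves does not render the boundary loop inessential. This yields an embedded disk $g_n \subset X_n$ whose boundary is still essential in $\partial X_n$ (a Dehn's-lemma-type conclusion at the top level).

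The main obstacle, and the final step, is pushing the embedded disk back down the tower. Projecting $g_n$ by $p_n$ into $X_{n-1}$ in general reintroduces self-intersections, and I must remove them by further innermost cut-and-paste \emph{without} killing the boundary class in $\pi_1(\partial X)$; controlling this at every level, so that essentiality of the boundary is preserved under projection followed by resolution, is the delicate heart of the argument. Descending one cover at a time, I arrive at an embedded disk $D \subset X_0 \subset X$ with $\partial D \subset \partial X$ representing a nontrivial element of $\pi_1(\partial X)$, as required.
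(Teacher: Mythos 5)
The paper does not prove this statement at all: it is the classical Loop Theorem of Papakyriakopoulos (in Stallings' strengthened form), quoted from Rolfsen \cite[p.~101]{Rol} and used as a black box in the proof of Theorem \ref{infinite 4-splitters}. So there is no proof in the paper to compare against; what you have written is an outline of the standard tower-construction proof from the literature. The strategy is the right one, but as a proof it has genuine gaps.

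The most serious gap is at the very first step: you assume, after general position, that $f_0$ is an embedding near $\partial D^2$. That assumption is not available. General position for a map $S^1 \rightarrow \partial X$ gives an immersed curve with transverse double points, not an embedded one, and the difficulty separating the Loop Theorem from Dehn's Lemma is precisely that the essential null-homotopic loop cannot be assumed embedded --- producing an \emph{embedded} essential curve in $\partial X$ bounding in $X$ is the conclusion of the theorem, not a hypothesis one may impose. With your assumption in place you are sketching Dehn's Lemma, not the Loop Theorem. This gap propagates to the two steps you leave vague. At the top of the tower the real argument is not interior double-curve surgery: $H_1(X_n;\mathbb{Z}/2)=0$ forces $\partial X_n$ to consist of $2$-spheres, one takes an embedded boundary circle of a regular neighborhood of the (singular) curve $f_n(\partial D^2)$ in such a sphere which is still essential, and caps it with a disk in the sphere pushed into the interior. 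Making ``still essential'' survive both this step and the descent is exactly why Stallings proves the stronger statement with a normal subgroup $N \triangleleft \pi_1(\partial X)$, concluding $[\partial D]\notin N$; that is the form that actually inducts down the tower, because projecting an embedded disk reintroduces singularities of the \emph{boundary} curve, and the cut-and-paste removing them changes the boundary class, with normality of $N$ being what keeps it essential. You flag this descent as ``the delicate heart'' but do not supply it, and without the normal-subgroup strengthening the induction does not close. Finally, your termination argument (that $\dim_{\mathbb{Z}/2} H_1(X_i;\mathbb{Z}/2)$ strictly drops) is unjustified; the standard argument is that the number of double points of the lifted disk strictly decreases, since otherwise the connected double cover would restrict trivially over the image of the disk and hence over its regular neighborhood, a contradiction.
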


\begin{thm} 
There exists an infinite collection of closed 4-dimensional splitters. The fundamental groups of their boundaries are distinct, indecomposable, and noncyclic. 

\label{infinite 4-splitters}
\end{thm}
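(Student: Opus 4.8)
The plan is to apply Wright's theorem (Theorem \ref{Wright}) to the family $W_i$ constructed in this section, so that the main work is verifying that the hypotheses of that theorem are met for the specific manifold at hand, namely $N = S^1 \times \mathbb{B}^3$ with the $2$-handle (or pseudo $2$-handle) attached along the curve $C$. Once the hypotheses are checked, Wright's theorem immediately produces a countably infinite collection $\{M_i\}$ with pairwise non-isomorphic, indecomposable, noncyclic boundary fundamental groups. Since we have already shown (in the preceding corollary) that each $W_i = \mathbb{B}^4 \cup_{\mathbb{B}^4} \mathbb{B}^4$, and since Wright's $M_i$ are precisely the $W_i$ obtained by the pseudo-handle construction along $K_i$, the infinitely many distinct $M_i$ are all closed splitters, giving the theorem. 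The distinctness, indecomposability, and noncyclicity of the boundary groups are exactly conclusions (1) and (2) of Theorem \ref{Wright}.

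First I would identify the pieces: set $N = S^1 \times \mathbb{B}^3$ and let $X$ be the result of attaching the $2$-handle $H$ along $C$, so that $\mathrm{cl}(\partial X - H)$ is the complement of the tubular neighborhood $T$ of $C$ in $S^1 \times S^2$, i.e. $Y := (S^1 \times S^2) - \mathrm{int}(T)$. The three hypotheses to verify are that $Y$ is (i) orientable, (ii) irreducible, and (iii) has incompressible boundary. Orientability is immediate since $S^1 \times S^2$ is orientable and $Y$ is a codimension-zero submanifold. Irreducibility requires showing every embedded $2$-sphere in $Y$ bounds a $3$-ball; I would argue this using the structure of $C$ as the preimage of the Mazur curve $\Gamma$ under the double cover, reducing to known facts about $S^1 \times S^2$ and the fact that $C$ is not contained in a ball (so that drilling it out does not create an essential sphere).

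The main obstacle, and the step deserving the most care, is incompressibility of $\partial Y = \partial T$, the torus bounding the tubular neighborhood of $C$. Here is where I expect to invoke the Loop Theorem together with Proposition \ref{gamma nontrivial}. The strategy is the contrapositive: if the boundary torus were compressible, then the inclusion-induced map $\pi_1(\partial T) \rightarrow \pi_1(Y)$ would have nontrivial kernel, and the Loop Theorem would furnish an embedded compressing disc whose boundary is an essential curve on the torus. The meridian $m_\Gamma$ of $\partial T_\Gamma$ is the natural candidate for a curve that could bound such a disc, but Proposition \ref{gamma nontrivial} shows precisely that $m_\Gamma$ (represented by $x_5$) is nontrivial in $S^1 \times S^2 - \mathrm{int}(T_\Gamma)$, via the nontrivial homomorphism $h$ into $\mathrm{Isom}(\mathbb{H}^2)$. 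Since the double-cover relationship between $C$ and $\Gamma$ transfers this nontriviality to the $C$ setting, no essential curve on $\partial T$ can die in $\pi_1(Y)$, so the kernel is trivial and the boundary is incompressible. With all three hypotheses confirmed, Theorem \ref{Wright} applies and the conclusion follows; I would close by noting that the resulting boundary groups are noncyclic and indecomposable exactly as asserted in conclusion (1), so these infinitely many closed splitters are genuinely distinct and none is a ball.
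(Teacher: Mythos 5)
Your overall route is the same as the paper's: apply Theorem \ref{Wright} to the pseudo-handle manifolds $W_i$, reduce everything to incompressibility of the torus $\partial T$ in $S^1 \times S^2 - \text{int}(T)$, and attack that via the Loop Theorem, Proposition \ref{gamma nontrivial}, and the double-cover relationship between $C$ and the Mazur curve $\Gamma$. However, your incompressibility argument has a genuine gap at its central step. You call the meridian ``the natural candidate'' for the boundary of a compressing disc and then conclude that ``no essential curve on $\partial T$ can die in $\pi_1(Y)$.'' That inference does not follow: an essential simple closed curve on the torus $\partial T$ can represent $\mu^k\lambda^j$ for any coprime pair $(k,j)$, and the Loop Theorem only furnishes an embedded disc whose boundary is \emph{some} essential curve --- you do not get to assume it is the meridian. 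Nontriviality of the meridian alone is strictly weaker than incompressibility. The paper closes exactly this hole with a slope analysis: writing $[\partial D] = \mu^k\lambda^j$, (i) if $j \neq 0$ then, because $C$ has algebraic index $1$ in $S^1 \times S^2$, the class $[\partial D]$ is already (homologically, hence homotopically) nontrivial in $S^1 \times S^2 - \text{int}(T)$, so it cannot bound a disc there; (ii) if $j = 0$ and $|k| \geq 2$, the curve $\mu^k$ is not embedded on the torus; hence (iii) the only possibility is $[\partial D] = \mu^{\pm 1}$, and only at this last point do Proposition \ref{gamma nontrivial} and the covering argument ($p_*([m]) = [m_\Gamma] \neq 1$, so $[m] \neq 1$) yield the contradiction. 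Your proposal is missing steps (i) and (ii); without them you have shown only that the meridian itself is not a compressing slope.

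A secondary remark: you propose to verify irreducibility of $S^1 \times S^2 - \text{int}(T)$, a hypothesis of Theorem \ref{Wright} that the paper's proof does not explicitly address (it asserts that the incompressibility claim ``suffices''). Your instinct to check it is sound, but your sketch (``reducing to known facts'') is not yet an argument; the standard one is that an embedded $2$-sphere in the complement misses $C$, which has index $1$, so the sphere is null-homologous in $S^1 \times S^2$ and bounds a ball there, and that ball cannot contain the homologically essential curve $C$, so it lies in the complement.
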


\begin{proof}

We'll show $M$ meets the hypotheses of Theorem \ref{Wright}, thus yielding a subsequence of $\{W_i\}$ as our desired collection. Recall $T$ is the tubular neighborhood of the attaching sphere $C$ in the construction of the Jester's manifold so that $\partial T = \partial\text{cl}(\partial M - h^{(2)})$. It suffices to show

\begin{claim} 
\label{bdry T incompressible}
$\partial T$ is incompressible in cl$(\partial M - h^{(2)})= S^1 \times S^2- \text{int}(T).$
\end{claim}

We will show $\text{ker}(\pi_1(\partial T) \rightarrow \pi_1(S^1 \times S^2 - \text{int}(T)))=1.$ Recall $T_\Gamma$ is the tubular neighborhood of the Mazur curve $\Gamma$ in the $S^1 \times S^2$ in the construction of the Mazur manifold $M\!a^4$ (see Subsection \ref{sec:MazMan}). Recall further Proposition \ref{gamma nontrivial}:
Let $m_\Gamma$ be the meridian of the torus $\partial T_\Gamma.$ Then $m_\Gamma$ is nontrivial in $S^1 \times S^2 - \text{int}(T_\Gamma).$

By construction $
S^1 \times S^2 - \intr(T)$ 
is a double cover of $S^1 \times S^2 - \intr(T_\Gamma).$ Call the associated covering map $p$ and let $m$ be a lift of $m_\Gamma$ so $m$ is a meridian of $\partial T$. Then $p_*([m])=[m_\Gamma]\neq 1$ gives $[m] \neq 1.$ Suppose by way of contradiction that there exists an embedded disc $D$ in $S^1 \times S^2 - \intr(T)$ with $\partial D$ being a nontrivial loop in $\partial T.$ 
Choose a longitude $l$ on $\partial T$ and let $\mu= [m]$ and $\lambda=[l]$ in $\pi_1(\partial T)$ so that for some $k,j \in \mb{Z},$ $[\partial D] = \mu^k \lambda^j$ in $\pi_1(\partial T).$ 
As $C$ has algebraic index 1 in $S^1 \times S^2$ a nonzero $j$ would imply $[\partial D]$ nontrivial in $\pi_1(S^1 \times S^2 - \intr(T)).$ Thus $[\partial D]=\mu^k.$ But any loop going around meridianally more than once and longitudinally zero will not be embedded. See \cite[p.~25]{Spa} for an illustration. %Figure \ref{mu^2}. 
Then it must be that $[\partial D]=[m]^{\pm1}.$ Since $m$ is nontrivial in $S^1 \times S^2 - \intr T$ such a $D$ cannot exist and by the Loop Theorem $\text{ker}(\pi_1(\partial T) \rightarrow \pi_1(S^1 \times S^2 - \text{int}(T)))=1.$ 
\end{proof}

\begin{defn}
We call any $M_i$ as yielded by the theorem when applied to any $M_\Psi$ a \emph{Jester's manifold}. 
\end{defn}

Note that for a given knot $K_i$, different choices of framing homeomorphism potentially yield different manifolds. So the variety of distinct Jester's manifolds produced by this construction is potentially much greater than we have shown.

We conclude this section with a restatement of our first main result which we have now demonstrated.\\

\noindent
\textbf{Theorem \ref{infinite_closed_splitters}.} \emph{
There exists an infinite collection of topologically distinct splittable compact contractible 4-manifolds. The interiors of these are topologically distinct contractible splittable open 4-manifolds.}
%\end{thm}

\section{Sums of Splitters}

In this our concluding section, we will exhibit an uncountable collection of contractible open 4-dimensional splitters. We will do so by considering the interiors of infinite boundary connected sums of our Jester's manifolds. These open manifolds can also be constructed as the connected sum at infinity of the interiors of the same sequence of manifolds. Using the notion of the fundamental group at infinity we will be able to show that any two such sums where one Jester's manifold appears more often as a summand in one than the other are topologically distinct. We then demonstrate a splitting for such manifolds.

\subsection{Some Manifold Sums and the Fundamental Group at Infinity}

We describe what we mean by the \emph{induced orientation} of the boundary of an oriented manifold $X^n$. Given a collar neighborhood of $\partial X$ which we identify as $\partial X \times [0,1]$ $(\partial X$ identified with $\partial X \times \{0\} )$ and a map $h:\mathbb{B}^{n-1} \rightarrow \partial X$ we define $\bar h$ as \[\bar{h}:\mathbb{B}^n \rightarrow \partial X \times (0,1], \ \ \ \bar{h}(x_1,x_2,...,x_n)= \left(h(x_1,x_2,...,x_{n-1}), \frac{3+x_n}{4}\right).\] (To be precise the codomain of $\bar{h}$ should be $\intr X.$) %See 
If $h:\mathbb{B}^{n-1} \rightarrow \partial X$ and $\bar{h}$ is a representative of the orientation of $X$ then the ambient isotopy class of $h$ is the \emph{induced orientation} of $\partial X$. \cite[p.~45]{RoSa}.

\begin{defn} Let $M^n$ and $N^n$ be connected oriented manifolds with nonempty boundaries. Orient $\Bd M$ and $\Bd N$ with their induced orientations and let $B_M$ and $B_N$ be tame $(n-1)$-balls in $\partial M^n$ and $\partial N^n$, respectively. Let $\phi:B_M \rightarrow B_N$ be an orientation reversing homeomorphism. Then $M^n \cup_\phi N^n$ is called a \emph{boundary connected sum} (\emph{BCS}) and is denoted $M^n \bdrysum N^n.$ 
\end{defn}

\begin{prop}
\label{bdry sum well defined}
The boundary connected sum is a connected oriented manifold which, provided $\Bd M$ and $\Bd N$ are connected, does not depend on the choices of $B_i$ or $\phi_i.$ Furthermore the set of connected oriented $n$-dimensional manifolds with connected boundaries is, under the operation of connected sum, a commutative monoid (that is, associative and contains an identity) the identity being $\mathbb{B}^n$ \cite[p.~97]{Kos}. 
\end{prop}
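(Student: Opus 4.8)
The plan is to verify each asserted property of the boundary connected sum in turn: that $M^n \bdrysum N^n$ is a connected oriented manifold, that it is independent of the choices of ball and gluing homeomorphism (when the boundaries are connected), and finally that the operation endows the relevant set of manifolds with the structure of a commutative monoid with identity $\mathbb{B}^n$. Since this is a foundational result, the proof is largely a matter of careful bookkeeping with tame balls, collar neighborhoods, and ambient isotopies in the PL category, drawing on standard regular neighborhood theory from \cite{RoSa} and the treatment in \cite{Kos}.

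First I would establish the basic structural claims. That $M^n \cup_\phi N^n$ is a manifold follows because $\phi$ glues along tame $(n-1)$-balls $B_M \subset \partial M$ and $B_N \subset \partial N$: near an interior point of the gluing locus the union looks like two half-spaces glued along a common boundary $(n-1)$-disc, hence is locally Euclidean, and the boundary of the ball $B_M$ becomes a corner which can be smoothed (or, in the PL setting, handled directly) so that $M \bdrysum N$ is a genuine manifold with boundary $(\partial M - \intr B_M) \cup_{\partial B} (\partial N - \intr B_N)$. Connectedness is immediate since $M$ and $N$ are each connected and they share the glued disc. Orientability and the existence of a consistent orientation follow from the fact that $\phi$ is chosen orientation reversing: with the induced boundary orientations on $\Bd M$ and $\Bd N$, an orientation-reversing identification of the boundary balls is exactly the compatibility condition that lets the orientations of $M$ and $N$ agree across the seam, yielding a well-defined orientation on the sum.

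Next I would address well-definedness. The key tool is that any two tame $(n-1)$-balls in the boundary of a connected manifold are ambient isotopic; this uses the uniqueness of regular neighborhoods and the fact that $\partial M$ connected lets one slide any tame ball to any other. Hence the choice of $B_M$ (and likewise $B_N$) does not affect the homeomorphism type of the result. For the choice of $\phi$, I would invoke that any two orientation-reversing homeomorphisms between the two balls differ by an orientation-preserving self-homeomorphism of $B_N$, and such a self-homeomorphism of a ball is isotopic to the identity (the Alexander trick, or its PL analogue), so the glued manifolds are homeomorphic. These isotopy extension arguments are exactly where the connectedness hypothesis on $\partial M$ and $\partial N$ is used, and where I expect the \textbf{main obstacle} to lie: one must be careful that the ambient isotopies respect orientation and that ``sliding'' a ball around a connected boundary is genuinely available, which is where the tameness hypothesis and the regular neighborhood machinery of \cite{RoSa} do the real work.

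Finally I would establish the monoid structure. Commutativity is essentially definitional once well-definedness is known, since $M \bdrysum N$ and $N \bdrysum M$ are glued by inverse homeomorphisms along the same balls and are therefore homeomorphic via the orientation-respecting identity swap. Associativity, $(M \bdrysum N) \bdrysum P \approx M \bdrysum (N \bdrysum P)$, requires choosing the two gluing balls in $\partial N$ to be disjoint and then observing that both sides are obtained by gluing $M$, $N$, $P$ along a pair of disjoint tame balls in $\partial N$ in a manner independent of the order of operations, again appealing to well-definedness to absorb the choices. For the identity, I would check that $M \bdrysum \mathbb{B}^n \approx M$: gluing a ball $B_N \subset \partial \mathbb{B}^n = S^{n-1}$ to a tame ball in $\partial M$ and then absorbing the complementary collar of $\mathbb{B}^n$ back into $M$ produces a manifold homeomorphic to $M$, since attaching a half-ball along a boundary disc does not change the homeomorphism type. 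All three of these reduce to the well-definedness statement together with elementary collar and ball-absorption arguments, so once the isotopy uniqueness of tame balls is in hand the monoid axioms follow without further difficulty, and the full proposition, as noted, is recorded in \cite[p.~97]{Kos}.
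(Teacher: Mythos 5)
The paper offers no proof of this proposition at all---it simply cites \cite[p.~97]{Kos}---and your sketch reconstructs the standard argument given in that reference: local analysis plus orientation-reversing gluing for the manifold and orientation claims, disc-theorem/regular-neighborhood uniqueness of tame balls in a connected boundary together with an Alexander-type isotopy for independence of the choices, and collar/ball-absorption arguments for commutativity, associativity, and the identity $\mathbb{B}^n$. Your proposal is correct in outline and takes essentially the same approach as the cited source, so there is nothing to compare beyond noting that the paper delegates the entire proof to the reference.
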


\begin{defn} Let $\{M^n_i\}_{i=1}^m$ ($m$ possibly $\infty$) be oriented manifolds with  \\ nonempty connected boundaries and for each $i=1,2,...$ let $B_{i,L}$ and $B_{i,R}$ be disjoint tame $(n-1)$-balls in $\partial M^n_i$. For $i>1$ let $\phi_i:B_{i,L} \rightarrow B_{i-1,R}$ be an orientation reversing homeomorphism. Let $\phi:\sqcup_{i>1} B_{i,L} \rightarrow \sqcup_{i\geq 1} B_{i,R}$ with $\phi|_{B_{i,L}}=\phi_i.$ Then $\left(\sqcup M_i\right)/\phi$ is called a \emph{boundary connected sum} (\emph{BCS}) and is denoted $M_1 \bdrysum M_2 \bdrysum \cdots \bdrysum M_m$ (or $M_1 \bdrysum M_2 \bdrysum \cdots$ when $m=\infty).$  
\end{defn}

We next prepare a description of an analogous sum for open manifolds. But first we need a proposition ensuring the existence of the desired attaching maps. 

\begin{defn} By a \emph{proper map} $p$ between spaces $Y$ and $X$ we mean a map $p:Y \rightarrow X$ such that for any compact $C \subset X$ we have $p^{-1}(C)$ is compact. 
A \emph{ray} is a proper embedding $[0,\infty)\rightarrow X.$
\end{defn}

\begin{note} Unless otherwise stated all rays will be piecewise linearly embedded. We will abuse our notation for rays (as well as for some other maps) by using our symbol for the map to also mean its image.

\end{note}

\begin{prop}
\label{nhd of proper ray}
 Suppose $N$ is a regular neighborhood of a ray $r$ in an open $n$-manifold $M (n\geq 4).$ Then $(N, \partial N) \approx (\mathbb{R}^n_+,\mathbb{R}^{n-1}).$
\end{prop}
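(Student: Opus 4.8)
The plan is to reduce, via the uniqueness of regular neighborhoods, to a single conveniently triangulated model, and then to build a PL homeomorphism onto the half-space one ball at a time. By the regular neighborhood theory of Whitehead quoted above, any two regular neighborhoods of the closed polyhedron $r$ in $M$ are PL homeomorphic by a homeomorphism carrying boundary to boundary; hence it suffices to exhibit \emph{one} regular neighborhood $N$ of $r$ with $(N,\partial N)\approx(\mathbb{R}^n_+,\mathbb{R}^{n-1})$. First I would triangulate $M$ so that $r$ is a subcomplex. Since $r$ is a proper embedding of $[0,\infty)$, it is an infinite edge-path; writing $A_k=r([0,k])$ for the initial arc of $k$ edges, I take $N$ to be a derived neighborhood of $r$ and let $B_k\subset N$ be the corresponding derived neighborhood of $A_k$, so that $B_1\subset B_2\subset\cdots$ and $N=\bigcup_k B_k$.

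The heart of the argument is to record the combinatorics of this exhaustion. Each $A_k$ is an arc, hence collapsible, so each $B_k$ is a PL $n$-ball by the cited fact that a regular neighborhood of a collapsible polyhedron is a ball. Moreover $A_{k+1}\searrow A_k$ (collapse the free terminal edge), so $B_{k+1}\searrow B_k$; the complementary piece $R_k:=\cl(B_{k+1}\setminus \intr B_k)$ is again a PL $n$-ball, and it meets $B_k$ in a single face $F_k$, an $(n-1)$-ball contained in $\partial B_k$. Thus $N$ is a linear chain of balls glued successively along faces, and the faces of the $R_k$ not used in these gluings assemble to form $\partial N$. Because the ray has codimension $n-1\ge 3$, it is locally unknotted, which is what guarantees that each $R_k$ and each gluing face is standard; this is the point at which the hypothesis $n\ge 4$ is used.

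On the model side I would present $\mathbb{R}^n_+$ with an identical decomposition: write $\mathbb{R}^n_+=\bigcup_k H_k$ where $H_k$ is the half-ball of radius $k$ about a fixed point of $\partial\mathbb{R}^n_+$. Each $H_k$ is a ball, each $\cl(H_{k+1}\setminus \intr H_k)$ is a ball meeting $H_k$ in a hemispherical face, and the exposed hemispherical pieces accumulate to all of $\mathbb{R}^{n-1}=\partial\mathbb{R}^n_+$---combinatorially the same chain as for $N$. I then build $h\colon N\to\mathbb{R}^n_+$ inductively: choose a PL homeomorphism $h_1\colon B_1\to H_1$ of balls taking $F_1$ to the corresponding face and $B_1\cap\partial N$ into $\mathbb{R}^{n-1}$; given $h_k\colon B_k\to H_1\cup\cdots\cup H_k$, extend across $R_k$ using the standard PL fact that a homeomorphism prescribed on a face of an $n$-ball extends to a homeomorphism onto another $n$-ball carrying boundary to boundary, arranged so that $R_k\cap\partial N$ maps to the corresponding hemispherical face. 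As the decomposition is locally finite and successive maps agree on overlaps, $h=\bigcup_k h_k$ is a well-defined PL homeomorphism with $h(\partial N)=\mathbb{R}^{n-1}$, which is the assertion.

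I expect the main obstacle to be the structural step of the second paragraph: rigorously extracting the ``chain of balls glued along faces'' description of the derived neighborhood and verifying that the pieces $R_k$, the gluing faces $F_k$, and the boundary faces are all standard---this is exactly where codimension $\ge 3$ (hence $n\ge 4$) is essential and where one must appeal to unknotting. The inductive extension also requires care, since the boundary $\partial N$ must be kept tracking $\mathbb{R}^{n-1}$ at every stage, not merely matched in the limit.
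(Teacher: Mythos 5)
The paper never gives its own proof of this proposition---it defers entirely to the author's thesis \cite[p.~28]{Spa}---so your proposal can only be judged on its own terms. Its skeleton (exhaust a derived neighborhood $N$ by ball neighborhoods $B_k$ of the arcs $r([0,k])$, observe that $R_k=\cl(B_{k+1}\setminus\intr B_k)$ is a ball meeting $B_k$ in a face $F_k$, and match this chain inductively against the half-ball exhaustion of $\mathbb{R}^n_+$) is the natural strategy and can be completed. But there is a genuine gap precisely at the step you flag in your last paragraph, and it is not a minor one: the ``standard PL fact'' you invoke (a homeomorphism prescribed on one face of an $n$-ball extends to a homeomorphism of the whole ball) gives no control over where the \emph{rest} of $\partial R_k$ is sent. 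Your induction needs the extension to carry the next gluing face $F_{k+1}$ onto the hemispherical face of the next model piece \emph{and} to carry $R_k\cap\partial N$ onto the flat annular region of $\partial\mathbb{R}^n_+$; coning/Alexander-type extension cannot be ``arranged'' to do this. What is actually required is a homeomorphism of triads $(R_k;\,F_k,\,F_{k+1},\,R_k\cap\partial N)\approx(F_k\times I;\,F_k\times\{0\},\,F_k\times\{1\},\,\partial F_k\times I)$, i.e.\ a product (collar) structure on each $R_k$ compatible with its frontier decomposition; with that in hand one extends $h_k$ fiberwise and the induction closes. This triad structure is the real content of the proposition and comes from relative regular neighborhood theory (Cohen \cite{Coh69}), not from the extension fact you cite. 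Relatedly, your opening reduction ``any two regular neighborhoods of $r$ are equivalent'' cannot rest on the Whitehead results quoted in the paper, which are for compact polyhedra; since $r$ is noncompact you need Cohen's uniqueness theorem, exactly the reference the paper itself uses for its CSI construction.

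A second inaccuracy is your diagnosis of where $n\geq4$ enters. Codimension-$(n-1)\geq 3$ unknotting does no work here: a PL embedded $1$-complex is locally flat in \emph{every} codimension (the link of a point of the ray is a point or point pair in a sphere, and such pairs never knot), a regular neighborhood of an arc is a PL ball in every codimension, and knotting of arcs or rays---including codimension-two knotting when $n=3$---lives in the complement of a neighborhood, never in its intrinsic topology. So the appeal to unknotting neither establishes that the $R_k$ and $F_k$ are ``standard'' (that is the triad/collar statement above) nor explains the dimension hypothesis: once the collar structure is supplied, your argument is dimension-independent, and the restriction $n\geq4$ in the statement reflects the paper's applications and supporting lemmas rather than anything in the chain-of-balls construction.
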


A proof of this proposition can be found in \cite[p.~28]{Spa}.

\begin{defn} For oriented, piecewise linear, open $n$-manifolds $X$ and $Y$, and rays $\alpha_X \subset X$ and $\alpha_Y \subset Y$ we define the \emph{connected sum at infinity (CSI)} of $(X,\alpha_X)$ and $(Y,\alpha_Y)$ as follows. Choose regular neighborhoods $N_X$ and $N_Y$ of $\alpha_X$ and $\alpha_Y$, respectively. Orient $\partial N_X$  with the induced orientation from the given orientation of $X-\intr N_X$ and orient $\partial N_Y$ from the given orientation of $Y-\intr N_Y.$ Then the CSI of $(X,\alpha_X)$ and $(Y,\alpha_Y)$ is
\[(X,\alpha_X)\natural (Y,\alpha_Y)= (X-\text{int}N_X)\cup_f (Y-\text{int}N_Y)\]
where $f$ is an orientation reversing p.l. homeomorphism $f: \partial N_X \rightarrow \partial N_Y$. 
\end{defn}

\begin{figure}[!ht]
\vspace{-4in}
\includegraphics[height=6in]{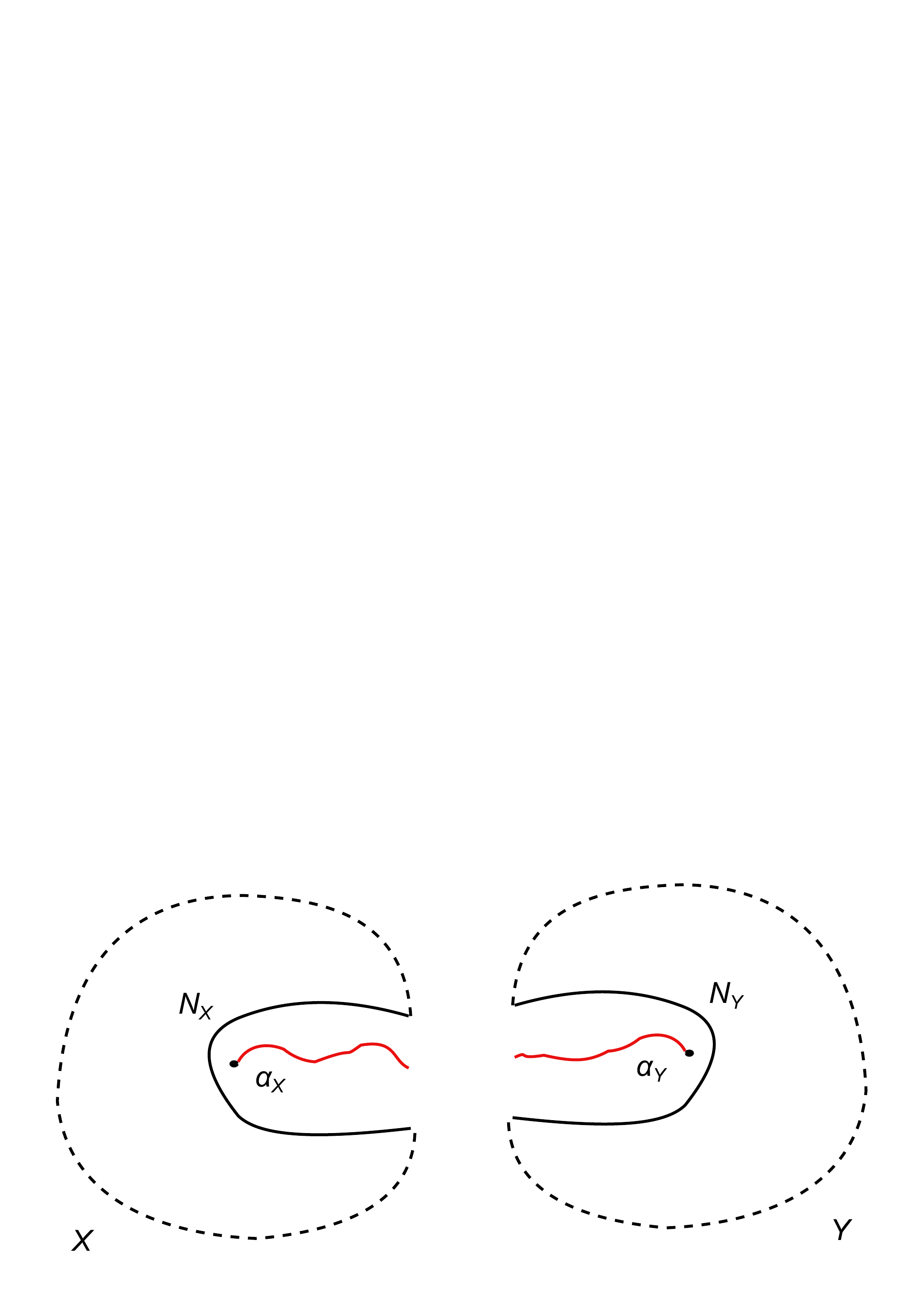} 
\centering                 %USE TO ADD FIGURES
\caption{$(X,\alpha_X)\natural (Y,\alpha_Y)$}
\end{figure}

Note that we are considering regular neighborhoods of \emph{noncompact} manifolds and by the uniqueness theorem for regular neighborhoods (see \cite[p.~196]{Coh69})\\
 $(X,\alpha_X)\natural (Y,\alpha_Y)$ is independent of the choices of neighborhoods $N_X$ and $N_Y.$

We note that (for our conditions on the summands) our definition of $X \natural Y$ is equivalent to both Gompf's definition of \textit{end sum} \cite{Gom} and Calcut, King, and Siebenmann's definition of connected sum at infinity \cite{CKS}. %(ARE THEIRS DEPENDENT ON RAYS?)

\begin{defn} Let $\{X_i\}_{i=1}^m$ ($m$ possibly $\infty),$ be oriented, piecewise linear, open $n$-manifolds and for $i=1,2,...$ and $x=L,R$ choose disjoint rays $\alpha_{i,x} \subset X_i.$ Further choose regular neighborhoods $N_{i,x}$ of $\alpha_{i,x}$ so that $N_{i,L}\cap N_{i,R}=\emptyset.$ Orient $\partial N_{i,x}$ with the induced orientation from the given orientation of $X_i-\intr(N_{i,L}\cup N_{i,R})$ and choose orientation reversing homeomorphisms ${\phi_i:\partial N_{i,R} \rightarrow \partial N_{i+1,L}.}$ Let $\phi: \sqcup_{i\geq 1} \partial N_{i,R} \rightarrow \sqcup_{i> 1} \partial N_{i,L}$ with $\phi|_{N_{i,R}}=\phi_i.$ Let $\check{X}_1=X_1-\intr N_{1,R}$ and for $i=2,3...$ let $\check{X}_i= X_i-\intr(N_{i,L} \cup N_{i,R}).$ Then ${(\sqcup \check{X_i})/\phi}$ is called the \emph{connected sum at infinity} (\emph{CSI}) of $\{(X_i,\alpha_{i,L}, \alpha_{i,R})\}.$ We denote this sum as $(X_1,\alpha_{1,L},\alpha_{1,R})\natural ... \natural(X_m, \alpha_{m,L}, \alpha_{m,R})$ (or $(X_1,\alpha_{1,L},\alpha_{1,R})\natural(X_2,\alpha_{2,L}, \alpha_{2,R})\natural ...$ when $m$ is $\infty.$) %See Fig. \ref{Infinite CSI}. 
\end{defn}

\begin{remark}
A flexible theory of connected sum at infinity is presented in \cite{CKS}. Among other things the order of the summands does not effect the homeomorphism type of the resulting manifold.
\end{remark}

\begin{remark}

The connected sum at infinity of the interiors of manifolds with connected boundary is homeomorphic to the interior of their boundary connected sum.  
For a CSI of open manifolds which are not the interiors of compact manifolds (Whitehead's exotic open 3-manifold or Davis manifolds, for example \cite[p.~6]{Gui}) we do not have the luxury of utilizing this result.  
\end{remark}

We'll now prepare the definition of the fundamental group at infinity of a 1-ended topological space. This is an invariant of spaces which are 1-ended and satisfy the condition that any pair of proper rays can be joined by a proper homotopy. (See \cite{Gui} for a much more thorough treatment of this topic.) Let $\{G_j,\varphi_j\}$ be an inverse sequence of groups:
\begin{diagram}
%\cdots & \lTo^{f_s} &
G_1 &  \lTo^{\varphi_2} &
G_2 & \lTo^{\varphi_3}  &
G_3 & \lTo^{\varphi_4}  &
\cdots .
\end{diagram}
For an increasing sequence of positive integers $\{j_i\}_{i=1}^\infty$, let \[f_i=\varphi_{j_{i+1}}...\varphi_{j_i+1}\varphi_{j_i}:G_{j_i}\rightarrow G_{j_{i+1}}\] 
and call the inverse sequence $\{G_{j_i},f_i\}$ a \emph{subsequence} of the inverse sequence $\{G_j,\varphi_j\}.$

We say the inverse sequences $\{G_j,\varphi_j\}$ and  $\{H_k,\psi_k\}$ are \emph{pro-isomorphic} if there exists subsequences $\{G_{j_i},f_i\}$ and $\{H_{j_i},g_i\}$ that may be fit into a commuting ladder diagram of the form

\begin{diagram}
%\cdots & \lTo^{f_s} &
G_{j_{1}} & & \lTo^{f_{2}} & &
G_{j_{2}} & & \lTo^{f_{3}} & &
G_{j_{3}} & & \lTo^{f_{4}} & &
\cdots\\
& \luTo^{u_{1}} & & \ldTo^{d_{2}} & & \luTo^{u_{2}} & & \ldTo^{d_{3}}  & & \luTo^{u_{3}} \\
& & H_{k_1} & & \lTo^{g_{2}} & & H_{k_{2}}
& & \lTo^{g_{3}}  & & H_{k_{3}} & & \lTo^{g_{4}} & \cdots 
\end{diagram}
Pro-isomorphism is an equivalence relation on the set of inverse sequences of groups.

\begin{defn} We say the inverse sequence of groups $\{G_j,\varphi_j\}$ is 
%\begin{itemize}
%\item 
\emph{stable} if it is pro-isomorphic to a constant sequence $\{H,\text{id}_H\}$, and we say $\{G_j,\varphi_j\}$ is
%\item 
\emph{semistable} if it is pro-isomorphic to an $\{H_k,\psi_k\},$ where each $\psi_k$ is an epimorphism.
%\end{itemize}
 
\end{defn}

We call $A\subset X$ a \emph{bounded set} (in $X$) if $\cl(X-A)$ is compact. We define a \emph{neighborhood of infinity} of  a topological space $X$ to be the complement of a bounded subset of $X.$ A \emph{closed (open) neighborhood of infinity} in $X$ is one that is closed (open) as a subset of $X.$ A closed neighborhood of infinity $N$ of a manifold $M$ with compact boundary is \emph{clean} if it is a codimension 0 submanifold disjoint from $\partial M$ and $\partial N=\Bd_M N$ has a bicollared neighborhood in $M.$ Here we are using the notation $\Bd_M N$ in the following sense. For $A$ a subset of a topological space $Z,$ $\Bd_Z A$ will denote the (topological) boundary (also known as the frontier) of $A$ in $Z$ (not to be confused with the notion of manifold boundary). We say $X$ is \emph{k-ended} if $k<\infty$ and $k$ is the least upper bound of the set of cardinalities of unbounded components of neighborhoods of infinity of $X.$ That is,
\[k=\sup\{|\{\text{unbounded components of }N\}|:N \text{ a neighborhood of infinity of }X\}.\]
In the case the above supremum is infinite, we say $X$ is \emph{infinite ended}.

By a \emph{cofinal} sequence $\{U_j\}$ of subsets of $X$ we mean $U_j \supset U_{j+1}$ and $\bigcap U_j= \emptyset.$ Now let $X$ be a 1-ended space and choose a cofinal sequence $\{U_j\}$ of connected neighborhoods of infinity of $X.$ Choose a ray (called a \emph{base ray}) $r$ in $X$ and base points $x_j \in r \cap U_j$ such that $r([r^{-1}(x_j),\infty)) \subset U_j.$ Let $G_j=\pi_1(U_j,x_j)$ and $\tau_j:G_j \rightarrow G_{j-1}$ be the homomorphism (called a \emph{bonding homomorphism}) defined as follows. Let $\iota_j: \pi_1(U_j,x_j)\rightarrow \pi_1(U_{j-1},x_j)$ be the homomorphism induced by the inclusion $U_j\hookrightarrow U_{j-1}$ and $\rho_j$ be the canonical basepoint change isomorphism. This isomorphism is induced by the map that generates a loop $\alpha'$ based at $x_{j-1}$ from a loop $\alpha$ at $x_{j}$ by starting at $x_{j-1}$ following $r$ to $x_{j}$ traversing $\alpha$ and returning along $r$ to $x_{j-1}.$ Then $\tau_j$ is defined as $\tau_j=\rho_j \circ \iota_j$ and $\{G_j,\tau_j\}$ is a inverse sequence of groups. We then define the \emph{fundamental group of infinity (based at r)} of $X$ (denoted $\text{pro-}\pi_1(\epsilon(X),r))$ to be the pro-isomorphism class of $\{G_j,\varphi_j\}.$ It can be shown that this class is independent of the choice of $\{U_j\}.$

The following theorem can be found in \cite[pp.~29-31]{Gui}.
\begin{thm}Let $X$ be a $1$-ended space.  If $\text{pro-}\pi_1(\epsilon(X),s)$ is semistable for some ray $s$ then any two rays in $X$ are properly homotopic and conversely. Further in any such space $\text{pro-}\pi_1(\epsilon(X),r)$ is independent of base ray $r.$
\label{semistable}
\end{thm}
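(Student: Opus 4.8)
The plan is to prove the three assertions — semistability forces all rays to be properly homotopic, the converse, and base‑ray independence — by organizing everything around the (non‑abelian) derived limit $\varprojlim^1$ of the inverse sequence $\{G_j,\tau_j\}$, which is the classical obstruction to the uniqueness of proper ray classes. First I would fix once and for all a cofinal sequence $\{U_j\}$ of connected neighborhoods of infinity, and record the consequence of $1$‑endedness that every proper ray is eventually contained in each $U_j$ and hence converges to the unique end; in particular, for any two rays $r,s$ and each $j$ there is a parameter $t_j$ with $r([t_j,\infty)),\,s([t_j,\infty))\subset U_j$. The whole argument then reduces to the dictionary: proper homotopy classes of rays converging to the end are classified by a pointed set naturally identified with $\varprojlim^1\{G_j,\tau_j\}$, and semistability is equivalent to this set being trivial.

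For the forward implication I would build a proper homotopy $H\colon[0,\infty)\times[0,1]\to X$ between two given rays $r,s$ rectangle by rectangle. Choosing the $t_j$ as above and connecting arcs $c_j$ from $r(t_j)$ to $s(t_j)$ inside the connected set $U_j$, the boundary of each rectangle $[t_j,t_{j+1}]\times[0,1]$ maps to a loop $\ell_j=r_j\cdot c_{j+1}\cdot s_j^{-1}\cdot c_j^{-1}$ in $U_j$, where $r_j=r|_{[t_j,t_{j+1}]}$ and $s_j=s|_{[t_j,t_{j+1}]}$. The homotopy is proper exactly when the filling of each rectangle can be pushed into progressively deeper neighborhoods, which forces $\ell_j$ to die there. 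This is where semistability does the work: using the invariance of semistability under pro‑isomorphism together with the Mittag–Leffler lemma that semistability is equivalent to the eventual stabilization of the images $\operatorname{im}(G_k\to G_j)$, I would pass to a subsequence on which the bonding maps carry each group onto its stable image, and then choose the arcs $c_j$ inductively so that each $\ell_j$ becomes null‑homotopic in $U_{j-1}$. Filling the rectangles with disks into $U_{j-1}$, whose index tends to infinity, produces a proper homotopy from $r$ to $s$.

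For the converse I would argue contrapositively: if $\{G_j,\tau_j\}$ is not semistable then the Mittag–Leffler condition fails at some index, so the images $\operatorname{im}(G_k\to G_j)$ do not stabilize; the standard mapping‑telescope construction then converts a non‑threadable tower of elements into two rays whose formal difference represents a nontrivial class of $\varprojlim^1\{G_j,\tau_j\}$, and such rays cannot be properly homotopic because a proper homotopy would force them into the same $\varprojlim^1$‑class. For base‑ray independence I would observe that a proper homotopy between base rays $r$ and $r'$ restricts, level by level, to basepoint‑change data compatible with the bonding homomorphisms, and these assemble into a pro‑isomorphism $\text{pro-}\pi_1(\epsilon(X),r)\cong\text{pro-}\pi_1(\epsilon(X),r')$; since the hypotheses guarantee that all rays are properly homotopic, the pro‑isomorphism class is independent of the base ray.

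I expect the main obstacle to be the forward implication — concretely, making the inductive choice of connecting arcs $c_j$ so that the boundary loops $\ell_j$ become null‑homotopic in the correct $U_{j-1}$ while the assembled fillings remain proper. The delicate bookkeeping lies in translating the abstract stabilization of the images into null‑homotopies realized geometrically inside the $U_j$, and in keeping track of basepoints and the coupling between consecutive rectangles (changing $c_{j+1}$ simultaneously alters $\ell_j$ and $\ell_{j+1}$). Once this $\varprojlim^1$/Mittag–Leffler dictionary is in place, the converse and the base‑ray independence follow far more formally.
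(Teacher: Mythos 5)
First, a point of comparison: the paper does not actually prove this theorem --- it is quoted from \cite[pp.~29--31]{Gui}, whose treatment (following Geoghegan) is precisely the $\varprojlim^1$/Mittag--Leffler dictionary you propose, so your overall route is the intended one. Your forward direction is sound as sketched: semistability is equivalent to the Mittag--Leffler condition for any tower, ML implies the $\varprojlim^1$-orbit is trivial without any countability hypothesis, and your rectangle-by-rectangle filling works --- after passing to a subsequence so that $\operatorname{im}(G_{j+1}\to G_{j-1})=\operatorname{im}(G_j\to G_{j-1})$, the obstruction loop $\ell_j$ can be killed in $U_{j-1}$ by adjusting only $c_{j+1}$, and the coupling you worry about resolves because rectangle $j+1$ is then repaired by $c_{j+2}$. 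The base-ray independence argument (a proper homotopy of rays induces a pro-isomorphism of the associated towers) is also the standard one and is fine.

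The genuine gap is in the converse. You claim that failure of semistability is converted, by a ``standard'' construction, into a tuple representing a nontrivial class of $\varprojlim^1\{G_j,\tau_j\}$, hence into two non-properly-homotopic rays. But failure of Mittag--Leffler does not formally produce such a tuple: if $\operatorname{im}(G_k\to G_j)$ strictly decreases and you pick $g_k\in\operatorname{im}(G_k\to G_j)\setminus\operatorname{im}(G_{k+1}\to G_j)$, the resulting ray can perfectly well satisfy $g_k=h_k\,\tau_{k+1}(h_{k+1})^{-1}$, since both factors on the right lie in $\operatorname{im}(G_k\to G_j)$; no contradiction arises from your choice. The implication ``not ML $\Rightarrow$ $\varprojlim^1\neq\ast$'' is Gray's theorem, proved by a counting/diagonal argument, and it genuinely requires the groups $G_j$ to be countable --- it is false for arbitrary towers of groups. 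So the crux of the converse is concealed in the word ``standard'': you must invoke Gray's theorem and justify countability of the $\pi_1$ of neighborhoods of infinity (automatic for manifolds and locally finite complexes, which is all the paper needs, but not for the bare ``$1$-ended space'' of the statement), or else supply that counting argument yourself. A smaller symptom of the same confusion: ``non-threadable'' refers to $\varprojlim$ (existence of threads), whereas the obstruction you need is nontriviality of an orbit of the $\prod G_j$-action defining $\varprojlim^1$; these are different conditions, and conflating them is exactly where the proposed converse would break down.
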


We call any 1-ended manifold $X$ that meets either of the equivalent conditions of Theorem \ref{semistable} \emph{semistable}. A \emph{stable} 1-ended manifold $X$ is one for which \\
$\text{pro-}\pi_1(\epsilon(X),r)$ is stable (hence semistable and thus independent of $r$). If $M$ is a compact manifold with connected boundary (for example any Jester's manifold) then the interior of $M$ is 1-ended and stable \cite[p.~33]{Spa}.

\subsection{CSI's of Semistable Manifolds}

We'll next show that the CSI of a collection of semistable manifolds is independent of the choice of rays.

\begin{defn}
We say $N$ is a \emph{half space} of a manifold $M^n$ if $N$ is the image of an embedding $h:\mathbb{R}_+^n\rightarrow M.$
We say such an $N$ is a \emph{proper} half space if the embedding $h$ is proper. We say $N$ is a \emph{tame} half space if $h(\partial \mathbb{R}^n_+)$ is bicollared in $M^n.$
\end{defn}

\begin{prop} 
\label{open minus half sp}
If $M$ is an open, contractible manifold and $N$ is a proper and tame half space of $M$, then $M-N \approx M.$
\end{prop}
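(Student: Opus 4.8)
The plan is to reduce the statement to a standard ``collar-swallowing'' homeomorphism, using the tameness and properness hypotheses only to identify the complement as the interior of a manifold with collared boundary. Writing $h:\mathbb{R}^n_+ \to M$ for the defining proper embedding, properness makes $N=h(\mathbb{R}^n_+)$ a closed subset, so $M-N$ is open; tameness makes the frontier hypersurface $C:=h(\partial \mathbb{R}^n_+)\approx \mathbb{R}^{n-1}$ bicollared in $M$. The first step is therefore to record that $M^-:=\cl(M-N)$ is a PL manifold whose boundary is the collared hypersurface $C$, that $\intr M^- = M-N$, and that $M = M^- \cup_C N$ with $M^-\cap N = C$.

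The second step is to exploit the product structure of a half-space. Since $N\approx \mathbb{R}^n_+ \approx \mathbb{R}^{n-1}\times[0,\infty) \approx C\times[0,\infty)$ by a homeomorphism carrying $\partial N$ to $C\times\{0\}$, the decomposition $M=M^-\cup_C N$ presents $M$ as the manifold obtained from $M^-$ by attaching an external half-infinite collar $C\times[0,\infty)$ along $\partial M^-=C$. Thus it suffices to prove the general fact that for any PL manifold-with-boundary $W$ one has
\[
W\cup_{\partial W}\bigl(\partial W \times [0,\infty)\bigr)\;\approx\; \intr W,
\]
applied to $W=M^-$; combined with $M-N=\intr M^-$ this yields $M-N\approx M$.

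For the third step I would construct this homeomorphism explicitly. Choose a PL collar $c:\partial W\times[0,1)\hookrightarrow W$ of $\partial W$, and splice it to the external collar to obtain a bicollar neighborhood $V\approx \partial W\times(-\infty,1)$ of $\partial W$ in $W\cup_{\partial W}(\partial W\times[0,\infty))$, where the external parameter $s\in[0,\infty)$ is sent to $-s\in(-\infty,0]$ and the internal parameter $t\in[0,1)$ to $t$. Fixing an increasing PL homeomorphism $g:(-\infty,1)\to(0,1)$ with $g=\mathrm{id}$ on $[\tfrac12,1)$, I would define the map to be the identity off $V$ and to send the point with bicollar coordinates $(x,u)$ to $c(x,g(u))$ on $V$. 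Because $g$ is the identity near $u=1$, the two definitions agree along the inner face of the collar, giving a well-defined PL homeomorphism onto $\intr W$ (its inverse is built the same way, and a coordinate count shows the image is exactly $c(\partial W\times(0,1))$ together with the non-collar interior).

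The main obstacle -- really the only place that needs care -- is verifying that $\cl(M-N)$ is genuinely a manifold with collared boundary $C$ rather than merely a closed set, i.e.\ that the frontier of $M-N$ is exactly $C$ and that the given bicollar restricts to a one-sided collar on the $M^-$ side. This is where tameness (the bicollar) and properness (closedness of $N$, so that manifold-interior points of $N$ are interior points of $M$) are both used. I note that neither contractibility nor $1$-endedness of $M$ enters the argument -- they are features of the ambient setting in which the proposition is applied rather than ingredients of the proof -- and that working PL causes no difficulty, since PL collars exist and may be spliced and reparametrized PL-ly.
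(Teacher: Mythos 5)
The paper itself gives no argument for this proposition---it simply defers to the author's thesis \cite[pp.~33--34]{Spa}---so there is no in-paper proof to compare against, and your proposal must stand on its own. In outline it does: the reduction (complement $=$ interior of a manifold-with-boundary $M^-$, half space $=$ external collar attached along $\partial M^-$, then collar absorption $W\cup_{\partial W}(\partial W\times[0,\infty))\approx \intr W$ via a reparametrization supported in a spliced bicollar) is the natural collar-squeezing argument, and your explicit homeomorphism is sound: since $g=\mathrm{id}$ near $1$, the two piecewise definitions agree on an open overlap, and the image is exactly $c(\partial W\times(0,1))$ together with $W$ minus the collar, i.e.\ $\intr W$. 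Your closing observation that contractibility of $M$ is never used is also correct.

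However, the step you explicitly set aside---that $\cl(M-N)$ is a manifold whose boundary is exactly $C$, collared from the $M-N$ side---is where essentially all the content of the proposition lives, and your proposal only names the ingredients (properness, tameness) without assembling them. A priori, $N$ might fail to fill up a full one-sided neighborhood of $C$, in which case $\cl(M-N)$ would not be a manifold along $C$ and your second step would have nothing to apply to. The gap closes as follows. Properness makes $N$ closed in $M$; invariance of domain makes $N-C=h(\intr \mathbb{R}^n_+)$ open in $M$; hence $M-C=(N-C)\sqcup(M-N)$ is a partition of $M-C$ into two disjoint open sets. Writing the bicollar as $b:C\times(-1,1)\to M$, each side $b(C\times(0,1))$, $b(C\times(-1,0))$ is connected (because $C\approx\mathbb{R}^{n-1}$ is), so each side lies entirely in $N-C$ or entirely in $M-N$. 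Both sides cannot lie in $N-C$: then $N$ would be clopen, hence a union of components of $M$, hence a boundaryless open manifold, contradicting $N\approx\mathbb{R}^n_+$ by invariance of domain at points of $C$. Both sides cannot lie in $M-N$: points of $N-C$ accumulate at every point of $C$ (take images of $x+(0,\dots,0,1/k)$). So exactly one side, say $b(C\times(0,1))$, lies in $N$; then $(M-N)\cap b(C\times(-1,1))=b(C\times(-1,0))$, and since $N-C$ is open and disjoint from $M-N$, no point of $N-C$ lies in $\cl(M-N)$. Therefore $\cl(M-N)=(M-N)\cup C$, and near $C$ it equals $b(C\times(-1,0])$---precisely the statement that $\cl(M-N)$ is a manifold with collared boundary $C$ and interior $M-N$. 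With that argument inserted, your proof is complete.
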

See \cite[pp.~33-34]{Spa} for a proof.

\begin{note}
Proposition \ref{nhd of proper ray} along with Proposition \ref{open minus half sp} imply when 
\[(X,\alpha_X)\approx(Y,\alpha_Y) \approx (\mb{R}^n,\{0\}^{n-1}\times [0,\infty))\]
 we have $(X,\alpha_X)\natural (Y,\alpha_Y)\approx \mb{R}^n.$
\label{csi of nspaces} 
\end{note}

\begin{lemma} Suppose $M^n$ $(n \geq 4)$ is a contractible, oriented, piecewise linear, semistable, open manifold. If $r$ and $r'$ are PL rays in $M^n$ and $N$ and $N'$ are regular neighborhoods of $r$ and $r'$, respectively, then there exists an orientation preserving self homeomorphism of $M^n$ taking $N$ to $N'$. 
\label{indep of rays}
\end{lemma}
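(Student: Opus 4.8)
The plan is to reduce the statement to the ambient isotopy uniqueness of the two rays and then invoke a codimension-three unknotting principle, the hypothesis $n \geq 4$ being exactly the condition that the ray $r$ has codimension $n-1 \geq 3$ in $M$.

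First I would record that, by hypothesis, $M$ is $1$-ended and semistable, so Theorem \ref{semistable} applies and guarantees that the PL rays $r$ and $r'$ are properly homotopic. (Contractibility is not needed for this step, but it is consistent with the ambient setting in which the lemma will be used.) After a preliminary general-position adjustment, legitimate since $\dim r + \dim r' = 2 < n$, I may assume $r \cap r' = \emptyset$.

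The heart of the argument is to promote this proper homotopy to a proper ambient isotopy $H \colon M \times [0,1] \to M$ with $H_0 = \mathrm{id}_M$ and $H_1(r) = r'$. Here is where $n \geq 4$ enters: the ray has codimension $n-1 \geq 3$, and in codimension at least three two properly homotopic proper PL embeddings of a $1$-manifold are properly ambient isotopic (the proper PL analogue of the Zeeman--Hudson homotopy-implies-isotopy theorem; see \cite{RoSa}, and, for the proper refinement appropriate to rays, \cite{CKS}). I expect this to be the main obstacle, and it is genuinely delicate precisely when $n = 4$: the homotopy track is a surface $[0,\infty)\times[0,1]$ mapped into a $4$-manifold, which in general position carries isolated double points and cannot be embedded outright, so one cannot simply read off the isotopy from an embedded strip as one does for $n \geq 5$. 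The codimension-three result circumvents this by working with the $1$-dimensional level rays rather than the $2$-dimensional track, and care must be taken that every step is carried out properly, so that the resulting ambient isotopy is proper and the noncompact, ray-valued form of the theorem genuinely applies.

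Finally I would upgrade the conclusion from rays to their regular neighborhoods. The time-one map $H_1$ is orientation preserving, being isotopic to the identity, and $H_1(N)$ is a regular neighborhood of $r' = H_1(r)$. By the uniqueness theorem for (noncompact) regular neighborhoods \cite[p.~196]{Coh69}, the same tool already invoked to see that the connected sum at infinity is well defined, there is a further ambient isotopy of $M$, fixing $r'$, carrying $H_1(N)$ onto $N'$; its time-one map $G_1$ is likewise orientation preserving. The composition $G_1 \circ H_1$ is then an orientation-preserving self-homeomorphism of $M$ taking $N$ to $N'$, as required.
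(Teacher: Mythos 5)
Your proof is correct and follows essentially the same route as the proof the paper defers to \cite[pp.~34--37]{Spa}: semistability yields a proper homotopy between the rays (Theorem \ref{semistable}), this is promoted to an ambient isotopy via codimension-three ray unknotting in dimension $n \geq 4$ (for which \cite{CKS} is indeed the right reference for the proper, noncompact version, including the delicate $n=4$ case), and uniqueness of noncompact regular neighborhoods \cite{Coh69} then upgrades the conclusion from rays to their neighborhoods, with orientation preserved because both homeomorphisms arise as time-one maps of isotopies starting at the identity. The only caveat is that the entire difficulty of the lemma---that a proper homotopy of rays in an open $4$-manifold can be replaced by an ambient isotopy, exactly the point where the singular $2$-dimensional homotopy track obstructs the naive argument---is absorbed into the citation of \cite{CKS}, which is legitimate since the needed statement is proved there, but it is worth recognizing that your argument assembles references precisely where the paper's cited proof carries out the construction.
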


A proof of this lemma can be found in \cite[pp.~34-37]{Spa}.

\begin{cor} For $n \geq 4$ and 1-ended semistable manifolds $X^n,Y^n,X_1^n, X_2^n,...$  $(X,\alpha_X) \natural (Y,\alpha_Y),$ $(X_1,\alpha_{1,L},\alpha_{1,R})\natural...\natural (X_m,\alpha_{m,L}, \alpha_{m,R})$ and \\
${(X_1,\alpha_{1,L},\alpha_{1,R})\natural (X_2,\alpha_{2,L}, \alpha_{2,R})\natural...}$ are independent of choices of rays $\alpha_X$, $\alpha_Y,\alpha_{1,L},$\\
$\alpha_{1,R},\alpha_{2,L}, \alpha_{2,R},....$
\end{cor}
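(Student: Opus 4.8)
The plan is to treat Lemma~\ref{indep of rays} as the engine and to reduce the independence statement to the following assembly principle: two CSIs that differ only in their choices of rays are homeomorphic because one can realize each ray change by an orientation preserving self-homeomorphism of the relevant summand and then absorb the resulting discrepancy along the gluing loci into the freedom one has in selecting the gluing homeomorphisms. That latter freedom is exactly the independence of the sum from the gluing map, supplied by the uniqueness of regular neighborhoods and by the flexible theory of \cite{CKS}. Thus it suffices to compare, one summand at a time, a sum built from rays $\alpha_{i,x}$ with one built from rays $\alpha_{i,x}'$.

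I would first do the two-summand model case $(X,\alpha_X)\natural(Y,\alpha_Y)$. Let $N_X,N_X'$ be regular neighborhoods of $\alpha_X,\alpha_X'$. By Lemma~\ref{indep of rays}, applied to $X$ (which meets its hypotheses), there is an orientation preserving self-homeomorphism $\Phi\colon X\to X$ with $\Phi(N_X)=N_X'$; its restriction $\Phi\colon X-\intr N_X\to X-\intr N_X'$ is an orientation preserving homeomorphism carrying the induced boundary orientation of $\partial N_X$ to that of $\partial N_X'$. Keeping $Y-\intr N_Y$ fixed and replacing the gluing map $f\colon\partial N_X\to\partial N_Y$ by $f'=f\circ(\Phi|_{\partial N_X})^{-1}\colon\partial N_X'\to\partial N_Y$ (still orientation reversing, since $f$ is orientation reversing and $\Phi$ is orientation preserving), the map $\Phi\cup\mathrm{id}_{Y-\intr N_Y}$ descends to a homeomorphism of $(X,\alpha_X)\natural(Y,\alpha_Y)$ onto $(X,\alpha_X')\natural(Y,\alpha_Y)$. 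Since the sum does not depend on the gluing map, this proves independence in $\alpha_X$; by symmetry it is independent of $\alpha_Y$ as well.

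For the finite and infinite sums the same template applies, the only new feature being that an internal summand $X_i$ carries two disjoint rays $\alpha_{i,L},\alpha_{i,R}$. Here I would produce an orientation preserving self-homeomorphism $\Phi_i\colon X_i\to X_i$ carrying the disjoint pair $(N_{i,L},N_{i,R})$ to $(N_{i,L}',N_{i,R}')$, restrict it to $\check X_i$, and assemble the $\Phi_i$ into a global map, defining each interface gluing map by $\phi_i'=\Phi_{i+1}\circ\phi_i\circ\Phi_i^{-1}$ (again orientation reversing). On the infinite quotient the assembled map is a homeomorphism: the truncated pieces $\check X_i$ form a locally finite closed cover, the maps agree on the shared boundary copies of $\mb{R}^{n-1}$ by construction, and each $\Phi_i$ and its inverse are continuous, so the glued bijection and its inverse are continuous.

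The main obstacle is the two-ray input just used: an orientation preserving self-homeomorphism of a single summand carrying one disjoint pair of proper rays, together with their regular neighborhoods, to another such pair. I expect to obtain this by the mechanism behind Lemma~\ref{indep of rays}: semistability makes any two proper rays properly homotopic (Theorem~\ref{semistable}), and in dimension $n\geq 4$ PL general position and engulfing upgrade such a proper homotopy to a proper ambient isotopy, whose time-one homeomorphism carries regular neighborhoods to regular neighborhoods by their uniqueness for noncompact polyhedra (Proposition~\ref{nhd of proper ray}). The delicate point is to perform this isotopy for the two rays simultaneously while keeping them disjoint and the isotopy proper, so that $(N_{i,L},N_{i,R})$ is moved to $(N_{i,L}',N_{i,R}')$ by a single orientation preserving homeomorphism rather than by two conflicting ones.
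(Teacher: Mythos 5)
Your proposal takes essentially the same route as the paper: the corollary appears there with no separate proof, as an immediate consequence of Lemma~\ref{indep of rays}, and the intended mechanism is exactly your first two paragraphs --- move each summand's regular neighborhood(s) by an orientation preserving self-homeomorphism supplied by the lemma, conjugate the gluing maps, and assemble the pieces over the locally finite closed cover. The one point you flag as delicate (a single self-homeomorphism carrying a disjoint pair of ray--neighborhood pairs to another such pair, which the one-ray statement of Lemma~\ref{indep of rays} does not formally supply, since applying it twice in succession disturbs the first neighborhood) is a genuine subtlety, but the paper passes over it silently, so your attempt is, if anything, more explicit than the text about what the lemma must actually deliver.
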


As a result of the corollary, when considering 1-ended semistable $n$-manifolds $(n\geq 4)$ $X,$ $Y,$ $X_1,X_2,...$ we will use the notations $X \natural Y,$ $X_1\natural...\natural X_m,$ and $X_1\natural X_2 \natural ...$  for the unique CSI's of $X$ and $Y,$ $X_1,X_2,...,X_m,$ and $X_1, X_2,....$

The following proposition can be justified by an application of Van Kampen's Theorem. 
\begin{prop}
Let $X$ and $Y$ be 1-ended semistable open $n$-manifolds $(n\geq 4).$ Then $\pi_1(X\natural Y)\cong \pi_1(X)*\pi_1(Y).$
\label{pi_1 CSI}
\end{prop}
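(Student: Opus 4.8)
The plan is to compute $\pi_1$ directly from the definition of the connected sum at infinity $X \natural Y = (X - \intr N_X) \cup_f (Y - \intr N_Y)$ and to apply Van Kampen's Theorem to this open cover. First I would set $A = X - \intr N_X$ and $B = Y - \intr N_Y$, so that $A \cap B$ is a neighborhood of the common boundary $\partial N_X \approx \partial N_Y$ along which the two pieces are glued. To put this in the Van Kampen framework I would thicken slightly: using the bicollar of the gluing hypersurface guaranteed by the tameness in Proposition \ref{nhd of proper ray} (which gives $(N,\partial N)\approx(\mb{R}^n_+,\mb{R}^{n-1})$), I would take open sets $U \supset A$ and $V \supset B$ whose intersection $U \cap V$ is an open collar $\partial N_X \times (-1,1) \approx \mb{R}^{n-1} \times (-1,1)$.

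The three homotopy-type inputs are the heart of the argument. First, $U \cap V \simeq \mb{R}^{n-1}$ is simply connected since $n-1 \geq 3$, so Van Kampen yields a free product with amalgamation over the trivial group, i.e.\ an honest free product $\pi_1(U)*\pi_1(V)$. Second, I must identify $\pi_1(U)\cong\pi_1(X)$ and $\pi_1(V)\cong\pi_1(Y)$. This is where I expect the main obstacle to lie: removing the open regular neighborhood $N_X$ of the ray $\alpha_X$ from $X$ should not change the fundamental group, but this requires a small argument rather than a citation. The clean way is to observe that $N_X$ is a proper tame half space of $X$ (by Proposition \ref{nhd of proper ray} it is homeomorphic to $\mb{R}^n_+$ with bicollared boundary), and then to invoke general position: since $\dim N_X = n$ but a loop and a nullhomotopy are of dimension $\leq 2$, and $n \geq 4$ forces $n-1 \geq 3 > 2$, any loop in $X - \intr N_X$ and any disc in $X$ bounding it can be pushed off the codimension-0 half space $N_X$ by transversality, so the inclusion $X - \intr N_X \hookrightarrow X$ induces a $\pi_1$-isomorphism.

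Assembling these, Van Kampen gives
\[
\pi_1(X \natural Y) \cong \pi_1(U) *_{\pi_1(U\cap V)} \pi_1(V) \cong \pi_1(X) * \pi_1(Y),
\]
which is the claim. I would take care to fix a single basepoint in the collar $U \cap V$ and to route the base ray through this region so the identifications are compatible; because all the spaces involved are semistable and $1$-ended, Theorem \ref{semistable} ensures the fundamental group is well defined independently of these choices, so no ambiguity arises. The one genuinely nontrivial point to write out carefully is the half-space excision $\pi_1(X - \intr N_X) \cong \pi_1(X)$; everything else is a direct application of Van Kampen once the cover is set up, and the dimension hypothesis $n \geq 4$ is precisely what makes both the amalgamating subgroup trivial and the general-position argument go through.
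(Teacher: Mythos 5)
Your overall strategy---Van Kampen applied to the pieces $X - \intr N_X$ and $Y - \intr N_Y$ glued along a simply connected collar---is exactly the justification the paper intends (the paper offers nothing beyond the remark that the proposition "can be justified by an application of Van Kampen's Theorem"), and you have correctly isolated the crux: the excision isomorphism $\pi_1(X - \intr N_X) \cong \pi_1(X)$. However, your justification of that crux fails as stated. One cannot push a loop or a disc off a \emph{codimension-zero} submanifold by transversality: transversality to a codimension-zero set is an empty condition, a generic $2$-disc in $X$ still meets $N_X$ in a $2$-dimensional set, and the inequality you invoke ($n-1 \geq 3 > 2$) is not the relevant count for any such pushing. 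What general position does give, since $2 + 1 < n$ when $n \geq 4$, is that a loop or disc can be made disjoint from the $1$-dimensional \emph{ray} $\alpha_X$; but that alone is not enough, since the disc may still run through $N_X - \alpha_X$.

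There are two standard ways to close this gap. One is to finish your general-position argument: after pushing the disc off $\alpha_X$, use the fact that the regular neighborhood $N_X$ is a mapping cylinder of a proper map $\partial N_X \rightarrow \alpha_X$ (this is the structure regular neighborhood theory provides, compact or not), so that $N_X - \alpha_X$ deformation retracts to $\partial N_X$; extending by the identity outside $N_X$ gives a deformation retraction of $X - \alpha_X$ onto $X - \intr N_X$, which carries the disc where you need it. The second, cleaner route avoids general position entirely: apply Van Kampen a second time, to the decomposition $X = (X - \intr N_X) \cup_{\partial N_X} N_X$ (thickened along the bicollar exactly as in your main decomposition). By Proposition \ref{nhd of proper ray}, $N_X \approx \mb{R}^n_+$ is contractible and $\partial N_X \approx \mb{R}^{n-1}$ is simply connected, so $\pi_1(X) \cong \pi_1(X - \intr N_X) \ast \pi_1(N_X) \cong \pi_1(X - \intr N_X)$. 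With either repair, the rest of your argument (simply connected overlap $\mb{R}^{n-1} \times (-1,1)$, hence amalgamation over the trivial group) goes through. One small side correction: the role of semistability in this proposition is to make $X \natural Y$ itself well defined independently of the chosen rays (via Lemma \ref{indep of rays} and its corollary), not to resolve basepoint ambiguity; basepoint independence of $\pi_1$ requires only path connectedness, so Theorem \ref{semistable} is not what is doing that work.
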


\subsection{Some Combinatorial Group Theory and Uncountable Jester's Manifold Sums}

The primary goal of this subsection is the following. 

\begin{thm}
The set of homeomorphism classes of all possible CSI's of interiors of Jester's manifolds is uncountable.
\label{uncountable Jester sums}
\end{thm}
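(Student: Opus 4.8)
The plan is to attach to each contractible open $4$-manifold built as such a CSI the pro-isomorphism type of its fundamental group at infinity, and to read off from that invariant exactly how many times each summand was used.

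Let $J_1,J_2,\dots$ denote the countable collection of topologically distinct compact contractible splitters produced in Theorem \ref{infinite 4-splitters}, and set $G_k=\pi_1(\partial J_k)$. By that theorem the $G_k$ are pairwise non-isomorphic, indecomposable, and noncyclic; they are also finitely generated, and since each $\partial J_k$ is a homology $3$-sphere they are perfect. For a function $\epsilon:\mathbb{N}\to\{1,2\}$ let $U_\epsilon$ be the CSI of a sequence of copies of the interiors $\intr J_k$ in which $\intr J_k$ occurs exactly $\epsilon(k)$ times. Each interior is $1$-ended and stable, so by Lemma \ref{indep of rays} and its corollary $U_\epsilon$ is a well-defined (ray-independent) contractible, oriented, $1$-ended, semistable open $4$-manifold. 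There are $2^{\aleph_0}$ such functions $\epsilon$, so it suffices to prove that $\epsilon\mapsto U_\epsilon$ is injective on homeomorphism classes; this I would do by showing that the multiset $\{\!\{G_k^{\epsilon(k)}\}\!\}$ is recoverable from $\text{pro-}\pi_1(\epsilon(U_\epsilon))$, which (being defined for semistable $1$-ended spaces, Theorem \ref{semistable}) is a homeomorphism invariant.

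First I would compute the invariant. Writing $H_1,H_2,\dots$ for the list of groups $G_k$ repeated with multiplicity $\epsilon(k)$, take the cofinal sequence of neighborhoods of infinity $V_n$ obtained by deleting compact cores of the first $n$ summands together with the gluing tubes joining them. The complementary tail is the interior of an infinite boundary connected sum of contractible manifolds and so is itself contractible, while $V_n$ retracts onto a punctured connected sum of the relevant boundaries; a van Kampen argument exactly as in Proposition \ref{pi_1 CSI} then gives $\pi_1(V_n)\cong H_1*\cdots*H_n$. Because each such boundary bounds the contractible summand behind it, every generator coming from $H_{n+1}$ dies once that summand's core is reinstated, so the bonding homomorphism $\pi_1(V_{n+1})\to\pi_1(V_n)$ is the retraction that is the identity on $H_1*\cdots*H_n$ and kills $H_{n+1}$. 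Thus $\text{pro-}\pi_1(\epsilon(U_\epsilon))$ is represented by the ``free-product--projection'' tower
\[ H_1 \longleftarrow H_1*H_2 \longleftarrow H_1*H_2*H_3 \longleftarrow \cdots \]
with these split surjections as bonds.

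It then remains to prove the purely group-theoretic statement that the multiset of factors of such a tower is a pro-isomorphism invariant; equivalently, that if two such towers (with indecomposable, noncyclic, perfect, residually finite $3$-manifold-group factors) are pro-isomorphic then their factor multisets coincide. Here is the approach. In the commuting ladder witnessing a pro-isomorphism, each bond of either tower is a retraction, so for a fixed surviving factor $G$ the relevant composite of ladder maps restricts to the inclusion of $G$; hence $G$ maps injectively into the other tower's groups, and by the Kurosh subgroup theorem its image (being indecomposable and non-free) is conjugate into a single factor $H'$, realizing $G$ as a retract of $H'$. Running this symmetrically produces a chain of retractions among the fixed countable family $\{G_k\}$; I would close it up to genuine isomorphisms, and thereby match multiplicities via the Grushko--Kurosh uniqueness of indecomposable free factorizations, using the Hopf property and residual finiteness of finitely generated $3$-manifold groups to rule out proper retractions within the family. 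Granting this, distinct $\epsilon$ yield towers with distinct factor multisets, hence non-pro-isomorphic ends and non-homeomorphic manifolds, producing uncountably many homeomorphism classes.

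The main obstacle is precisely this last rigidity input: upgrading ``$G$ is a retract of $H'$'' to ``$G\cong H'$'' for the boundary groups at hand, i.e.\ showing that no $G_k$ is a proper retract of another member of the family. Everything else --- the computation of the end invariant and the reduction to a multiset comparison --- is routine assembly, but this antichain-type property is where the specific structure of the $\partial J_k$ (their residual finiteness and Hopficity, together with the independence built into Wright's construction) must be brought to bear.
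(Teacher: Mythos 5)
Your overall architecture matches the paper's: the invariant is $\text{pro-}\pi_1$ at infinity, the end of the infinite CSI is computed as the tower of free products $H_1 \leftarrow H_1\ast H_2 \leftarrow H_1\ast H_2\ast H_3\leftarrow\cdots$ with projection bonds (this is the paper's Theorem \ref{uncountable CSI's}, fed through Theorem \ref{Craig's no.2}), and the theorem is reduced to a multiset-rigidity statement about such towers. The genuine gap is in the group-theoretic core, and you have flagged it yourself: from the pro-isomorphism ladder you extract only that each surviving factor $G$ of one tower embeds, via the Kurosh subgroup theorem, as a \emph{retract} of a conjugate of a single factor $H'$ of the other tower, and you then need the unproven ``antichain'' property that no $\pi_1(\partial J_k)$ is a proper retract of another member of the family. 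The tools you propose for this do not obviously suffice: Hopficity forbids non-injective surjections of a group onto \emph{itself}, and residual finiteness is inherited by subgroups, but neither says anything against a proper retraction $H'\to G$ between two non-isomorphic members of the family; moreover, your ``chain of retractions'' runs through an infinite family and need never close into a loop that could be contradicted. As written, the multiset-matching step does not go through.

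The paper's Theorem \ref{CuKw type} shows that no such rigidity input about the specific groups is needed; the idea you are missing is to use the commuting ladder in \emph{both} directions and apply Kurosh \emph{twice}, producing a sandwich rather than a mere retraction. In the paper's notation: for $l\le j_s$, Kurosh gives $d_{s+2}(A_l)\leq \beta B_t\beta^{-1}$ for a single factor $B_t$, and commutativity forces $t\le k_s$; a second application of Kurosh gives $u_{s+1}(\beta B_t\beta^{-1})\leq \gamma A_r\gamma^{-1}$ for some factor $A_r$. Commutativity of the ladder then yields
\[
A_l=f_{s+2}(A_l)=u_{s+1}d_{s+2}(A_l)\leq u_{s+1}(\beta B_t\beta^{-1})\leq \gamma A_r\gamma^{-1},
\]
and normal-form facts about free products (no nontrivial factor lies in a conjugate of a different factor; $A_l\le\gamma A_l\gamma^{-1}$ forces $\gamma\in A_l$) collapse the sandwich: $r=l$ and $u_{s+1}(\beta B_t\beta^{-1})=A_l$ exactly. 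The second observation your proposal lacks is that $u_{s+1}$ is injective on $B_t$, precisely because $t\le k_s$ and $g_{s+1}=d_{s+1}\circ u_{s+1}$ restricts to the identity on $H_{k_s}$; hence $B_t\cong\beta B_t\beta^{-1}\cong u_{s+1}(\beta B_t\beta^{-1})=A_l$ is an honest isomorphism, not just a retraction. This produces the injective matching of factors (the paper's Claims 1 and 2) using nothing about the groups beyond indecomposability and not being infinite cyclic --- no Hopficity, residual finiteness, or perfectness --- so the ``main obstacle'' you identify simply does not arise in the paper's argument.
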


This Theorem can be obtained from Theorem \ref{infinite 4-splitters} by an application of Curtis and Kwun's Theorem (4.1)
 of \cite{CuKw}. Since the approach used there is a bit outdated, we
will supply an alternate version of their theorem. 
The essence of our proof is the same as theirs,
but ours will take advantage of the rigorous development of the
fundamental group at infinity that has
taken place in the intervening years. The new approach is also more direct in
that it compares open manifolds directly, without reference to some discarded
boundaries. We will demonstrate shortly the following more general result, for which Theorem \ref{uncountable Jester sums} will be a corollary.

\begin{thm}
Let $\mathcal{G}$ be a collection of distinct
indecomposable groups, none of which are infinite cyclic and let $\left\{  X_{i}^{n}\right\}  $and $\left\{
Y_{j}^{n}\right\}  $ be countably infinite collections of simply connected, 1-ended
open $n$-manifolds with each $\operatorname*{pro}$-$\pi_{1}\left(
\varepsilon\left(  X_{i}\right)  \right)  $ and $\operatorname*{pro}$-$\pi
_{1}\left(  \varepsilon\left(  Y_{j}\right)  \right)  $ being stable and
pro-isomorphic to an element of $\mathcal{G}$. Then $X_1 \natural X_2 \natural ...$ and $Y_1 \natural Y_2 \natural...$ are 1-ended and semistable and if any element of $\mathcal{G}$
appears more times in one of the sequences, $\left\{  \operatorname*{pro}%
\text{-}\pi_{1}\left(  \varepsilon\left(  X_{i}\right)  \right)  \right\}  $
and $\left\{  \operatorname*{pro}\text{-}\pi_{1}\left(  \varepsilon\left(
Y_{j}\right)  \right)  \right\}  $, than it does in the other, then
$\operatorname*{pro}$-$\pi_{1}\left(  \varepsilon\left(  X_{1}\overset{}{\natural}X_{2}\overset{}{\natural}X_{3}\overset{}{\natural}\cdots\right)  \right)  $is not pro-isomorphic to $\operatorname*{pro}$-$\pi_{1}\left(  \varepsilon\left(  Y_{1}\overset{}{\natural}Y_{2}\overset{}{\natural}Y_{3}\overset{}{\natural}\cdots\right)  \right)  .$\bigskip
\label{uncountable CSI's}
\end{thm}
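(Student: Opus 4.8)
\emph{Proof proposal.}
The plan is to first identify $\operatorname{pro}\text{-}\pi_1$ of an infinite CSI of such summands with an explicit ``free‑product telescope,'' and then to reduce the non–pro-isomorphism assertion to a purely group‑theoretic statement about such telescopes. For the first step I would use stability of each summand to choose, for every $i$, a cofinal sequence of connected clean neighborhoods of infinity $V_i^1 \supset V_i^2 \supset \cdots$ in $X_i$ with $\pi_1(V_i^m)\cong G_i$ and with all inclusion-induced bonds isomorphisms. Since $n\ge 4$, the rays $\alpha_{i,L},\alpha_{i,R}$ have half-space regular neighborhoods $N_{i,x}\approx \mathbb{R}^n_+$ (Proposition \ref{nhd of proper ray}) whose boundaries $\partial N_{i,x}\approx\mathbb{R}^{n-1}$ are connected and simply connected; a Van Kampen argument then shows that deleting these half-spaces changes neither connectivity nor $\pi_1$ of the $V_i^m$, so each $\check V_i^m:=V_i^m-\operatorname{int}(N_{i,L}\cup N_{i,R})$ still has $\pi_1\cong G_i$, and each $\check X_i$ is simply connected (using that $X_i$ is simply connected). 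The essential geometric observation is that each gluing surface $\partial N_{i,R}=\partial N_{i+1,L}\approx \mathbb{R}^{n-1}$ runs out to infinity and joins the end of $X_i$ to the end of $X_{i+1}$, so the sum is one-ended.

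Next I would build cofinal neighborhoods of infinity of $W:=X_1\natural X_2\natural\cdots$ by setting $U_k=\bigl(\bigcup_{i\le k}\check V_i^{\,k}\bigr)\cup\bigl(\bigcup_{i>k}\check X_i\bigr)$, glued along the surviving surfaces. Each $U_k$ is connected and the family is cofinal, so $W$ is one-ended. Applying Van Kampen over this tree-like gluing pattern (any loop is compact, hence meets only finitely many pieces, and all gluing surfaces are simply connected), the summands with index $>k$ are included ``whole'' and contribute trivially, giving $\pi_1(U_k)\cong G_1*\cdots*G_k$. The inclusion $U_k\hookrightarrow U_{k-1}$ enlarges the $k$-th summand from $\check V_k^{\,k}$ to the simply connected $\check X_k$, and is the stable isomorphism on the remaining factors, so the bonding map is exactly the retraction $G_1*\cdots*G_k\to G_1*\cdots*G_{k-1}$ killing $G_k$. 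Thus
\[
\operatorname{pro}\text{-}\pi_1(\varepsilon(W))\;\cong\;\{\,G_1*\cdots*G_k,\ \text{retraction}\,\}:\qquad
G_1\longleftarrow G_1*G_2\longleftarrow G_1*G_2*G_3\longleftarrow\cdots,
\]
and since the retractions are epimorphisms this inverse sequence is semistable, proving that $W$ (and likewise $Y_1\natural Y_2\natural\cdots$) is one-ended and semistable.

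It then remains to prove the group-theoretic lemma: if the telescopes $\mathbf{A}=\{G_1*\cdots*G_k\}$ and $\mathbf{B}=\{H_1*\cdots*H_l\}$ (all factors in $\mathcal{G}$, with retraction bonds) are pro-isomorphic, then for every $G\in\mathcal{G}$ the multiplicities $\mu^{\mathbf A}(G)=\#\{i:G_i\cong G\}$ and $\mu^{\mathbf B}(G)$ agree as cardinals; the theorem is the contrapositive. To access this I would unwind the pro-isomorphism ladder: passing to subsequences gives maps $u_i\colon B_{l_i}\to A_{k_i}$ and $d_i\colon A_{k_i}\to B_{l_{i-1}}$ with $u_{i-1}d_i$ and $d_iu_i$ equal to the (surjective) composite bonds. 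Because the composites are surjective retractions onto honest free factors $A_{k_{i-1}}\subset A_{k_i}$ and $B_{l_{i-1}}\subset B_{l_i}$, restriction shows each $u_i,d_i$ is surjective and yields a ``sandwich of retracts''
\[
A_{k_{i-1}}\ \prec\ B_{l_{i-1}}\ \prec\ A_{k_i}\ \prec\ B_{l_i}\ \prec\ \cdots,
\]
where $X\prec Z$ means $X$ is a retract of $Z$. Here $\mu^{\mathbf A}_{k_i}(G)$ and $\mu^{\mathbf B}_{l_i}(G)$ are the Grushko $G$-factor counts of $A_{k_i}$ and $B_{l_i}$, and these increase to $\mu^{\mathbf A}(G)$ and $\mu^{\mathbf B}(G)$.

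The hard part will be turning this sandwich into a multiplicity-preserving correspondence of Grushko factors. The naive hope, that the $G$-factor count is monotone under retracts, is false: for instance $G*G$ is a retract of the freely indecomposable group $(G*G)\times\mathbb{Z}$, so a single indecomposable factor can ``carry'' several $G$-factors because a homomorphic image of a freely indecomposable group may be freely decomposable. The plan to defeat this is to exploit both directions of the sandwich simultaneously together with the Kurosh subgroup theorem: for a section $s\colon A_{k_{i-1}}\hookrightarrow B_{l_{i-1}}$ with retraction $r$, each freely indecomposable non-cyclic Grushko factor $G_c$ of $A_{k_{i-1}}$ embeds (via $s$) as a Grushko factor of some Kurosh factor lying in a single conjugate of an $H$-factor, and one uses $r$ (which restricts to the inverse isomorphism on $s(A_{k_{i-1}})$) to prevent two distinct factors from collapsing together, thereby matching factors with their isomorphism type preserved; this is the place where the hypotheses that the $G\in\mathcal{G}$ are pairwise non-isomorphic, freely indecomposable, and not infinite cyclic are all essential, and it is where the argument of Curtis and Kwun \cite{CuKw} is reproduced in the present language. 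Combining the resulting two-sided inequalities along the sandwich and passing to the limit would force $\mu^{\mathbf A}(G)=\mu^{\mathbf B}(G)$ for every $G$, contradicting the hypothesis that some element of $\mathcal{G}$ appears strictly more often in one sequence, and completing the proof.
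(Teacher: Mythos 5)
Your overall architecture is the same as the paper's: identify $\text{pro-}\pi_1$ of the infinite CSI with the free-product telescope $G_1 \leftarrow G_1\ast G_2 \leftarrow G_1\ast G_2\ast G_3 \leftarrow \cdots$ by building cofinal neighborhoods of infinity from neighborhoods of infinity of the summands with ray neighborhoods deleted, applying Van Kampen along the simply connected gluing surfaces, and noting the bonds are surjective (hence semistability); then reduce to a Kurosh-based factor-matching statement for such telescopes. That group-theoretic statement is precisely the paper's Theorem \ref{CuKw type}, and your sketch of its proof (use both directions of the pro-isomorphism ladder, apply Kurosh twice, invoke indecomposability and non-cyclicity to pin down a unique matching factor) is the paper's argument, modeled as you say on Curtis--Kwun \cite{CuKw}.

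There is, however, a genuine gap in your first step. You claim that stability lets you choose cofinal neighborhoods $V_i^1 \supset V_i^2 \supset \cdots$ with $\pi_1(V_i^m) \cong G_i$ and \emph{all inclusion-induced bonds isomorphisms}. That is strictly stronger than stability and does not follow from it: stability only says the inverse sequence $\{\pi_1(U_{i,j})\}$ is pro-isomorphic to the constant sequence $\{G_i\}$, and the actual groups can be much larger with only their images stabilizing. For instance, the sequence $G\ast\mathbb{Z} \leftarrow G\ast\mathbb{Z} \leftarrow \cdots$, with each bond restricting to the identity on $G$ and killing the $\mathbb{Z}$ factor, is pro-isomorphic to the constant sequence $\{G\}$ although no term is isomorphic to $G$; nothing prevents a summand $X_i$ from having ends of exactly this sort. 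Upgrading neighborhoods of infinity so that their genuine fundamental groups equal the stable group is a Siebenmann-type ``1-neighborhood'' theorem requiring handle trading in high dimensions, is unavailable in dimension $4$, and is not among the hypotheses (it does hold for the intended application, interiors of compact manifolds, where $\partial C \times (0,1]$ serves, but the theorem is stated for arbitrary stable summands). Consequently your assertions $\pi_1(U_k)\cong G_1\ast\cdots\ast G_k$ and ``the bonding map is exactly the retraction'' do not literally follow. The repair is exactly what the paper does: keep the honest groups $\pi_1(U_{i,j})$ together with the ladder diagrams witnessing pro-isomorphism with $\{G_i,\mathrm{id}\}$; Van Kampen then gives $\pi_1(W_k)\cong \pi_1(U_{1,k})\ast\cdots\ast\pi_1(U_{k,k})$ with bonds the free products of the original bonds (killing the last factor), and taking levelwise free products of the up and down maps of the individual ladders produces a commuting ladder between $\{\pi_1(W_k)\}$ and the telescope $\{G_1\ast\cdots\ast G_k\}$. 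With that bookkeeping substituted for your unjustified normalization, your argument goes through and coincides with the paper's proof.
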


First we'll state and prove a theorem about certain types of inverse sequences of groups that will help us determine when two infinite CSI's of our Jester's manifolds are distinct. This theorem (or its discovery) and its proof are motivated by Theorem (4.1) (and its proof) in \cite{CuKw}. 

\begin{thm}
Let $A_1,A_2,...$ and $B_1,B_2,...$ be indecomposable groups none of which are infinite cyclic, and for all positive integers $j$ and $k$ let $G_j$ and $H_k$ be the free products 
\[G_j=A_1\ast A_2\ast ... \ast A_j\] \[H_k=B_1\ast B_2 \ast... \ast B_k.\] Further let 
$\varphi_j:G_j\rightarrow G_{j-1}$ and
$\psi_k:H_k\rightarrow H_{k-1}$
be the obvious projections 
such that \[\varphi_j|_{G_{j-1}}=id_{G_{j-1}}\mbox{,             } \varphi_j(A_j)=1,\] \[\psi_k|_{H_{k-1}}=id_{H_{k-1}} \mbox {   and,   }\psi_k(B_k)=1.\]
Suppose the inverse sequences $\{G_j,\varphi_j\}$ and $\{H_k, \psi_k\}$ are pro-isomorphic. That is, there exists a commutative ladder diagram as below.

\[
\begin{diagram}
%\cdots & \lTo^{f_s} &
G_{j_{1}} & & \lTo^{f_{2}} & &
G_{j_{2}} & & \lTo^{f_{3}} & &
G_{j_{3}} & & \lTo^{f_{4}} & &
\cdots\\
& \luTo^{u_{1}} & & \ldTo^{d_{2}} & & \luTo^{u_{2}} & & \ldTo^{d_{3}}  & & \luTo^{u_{3}} \\
& & H_{k_1} & & \lTo^{g_{2}} & & H_{k_{2}}
& & \lTo^{g_{3}}  & & H_{k_{3}} & & \lTo^{g_{4}} & \cdots
\end{diagram}
\]
\\
Here the bonding homomorphisms are the compositions
\[f_i=\varphi_{j_{i+1}}...\varphi_{j_i+1}\varphi_{j_i}
\mbox{ and } 
g_i=\psi_{k_{i+1}}...\psi_{k_i+1}\psi_{k_i}.\] Then there exists a self bijection $\Phi$ of $\mathbb{Z}_+$
such that $A_j\cong B_{\Phi(j)}.$
\label{CuKw type}
\end{thm}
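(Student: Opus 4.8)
My plan is to reduce the statement to an equality of multiplicities and then extract a factor-by-factor matching from the commuting ladder using the Kurosh subgroup theorem. Producing a bijection $\Phi$ with $A_j\cong B_{\Phi(j)}$ is equivalent to showing that for every isomorphism class $Q$ of groups the multiplicities $a(Q)=\#\{j:A_j\cong Q\}$ and $b(Q)=\#\{k:B_k\cong Q\}$ (each an element of $\{0,1,2,\dots\}\cup\{\infty\}$) coincide. The two facts I will lean on are the standard consequences of the Kurosh subgroup theorem: (K1) a nontrivial, freely indecomposable, non-infinite-cyclic subgroup of a free product is conjugate into exactly one of the free factors; and (K2) distinct free factors are non-conjugate, and a factor contained in a conjugate of a factor forces the two factors to be equal with the conjugator lying in the factor. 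I will also record the one algebraic identity that the ladder forces, namely
\[
f_i\circ u_i \;=\; u_{i-1}\circ d_i\circ u_i \;=\; u_{i-1}\circ g_i,
\]
obtained from the two triangle relations $u_{i-1}\circ d_i=f_i$ and $d_i\circ u_i=g_i$.

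Next I would build the correspondence. Fix a rung $i$ and a factor $A_n$ with $n\le j_{i-1}$. Because $u_{i-1}\circ d_i=f_i$ is the identity on $G_{j_{i-1}}$, the map $d_i$ is injective on $G_{j_{i-1}}$, so $d_i(A_n)\cong A_n$ is indecomposable and non-cyclic; by (K1) it is conjugate into a unique factor $B_{p(n)}$ of $H_{k_{i-1}}$, with $p(n)\le k_{i-1}$. This immediately gives an embedding $A_n\hookrightarrow B_{p(n)}$, and the plan is to promote this to an isomorphism and then to check that $n\mapsto p(n)$ is injective and isomorphism-type preserving.

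The hard part, and the step I expect to be the main obstacle, is upgrading the embedding $A_n\hookrightarrow B_{p(n)}$ to a genuine isomorphism $A_n\cong B_{p(n)}$ (mutual embeddings are not enough, as there is no Schr\"oder--Bernstein for groups). The natural way is to feed $B_{p(n)}$ back through a left inverse of $d_i$; the only up-map that recovers $A_n$ is $u_{i-1}$ (since $u_{i-1}\circ d_i=f_i$ fixes $G_{j_{i-1}}$), yet $u_{i-1}$ is only guaranteed injective on $H_{k_{i-2}}$, whereas $B_{p(n)}$ merely lies in $H_{k_{i-1}}$. This one-step index gap is genuine and is \emph{not} removable by passing to a sub-ladder. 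I resolve it with the identity above: $u_i$ is injective on $H_{k_{i-1}}$ (left inverse $d_i$), so $u_i(B_{p(n)})\cong B_{p(n)}$ is conjugate into a factor $A_\ell$ by (K1). Since $u_{i-1}(B_{p(n)})=f_i(u_i(B_{p(n)}))$ and $u_{i-1}(B_{p(n)})$ contains a conjugate of the nontrivial group $A_n=u_{i-1}(d_i(A_n))$, it is nontrivial; hence $f_i$ does not annihilate $u_i(B_{p(n)})$, which forces $\ell\le j_{i-1}$. On conjugates of such a factor $f_i$ is injective, so $u_{i-1}$ is in fact injective on $B_{p(n)}$, and $u_{i-1}(B_{p(n)})\cong B_{p(n)}$ is conjugate into $A_\ell$. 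A (K2) sandwich then pins down $u_{i-1}(B_{p(n)})$ as exactly a conjugate of $A_n$ (so $\ell=n$), giving $B_{p(n)}\cong u_{i-1}(B_{p(n)})\cong A_n$.

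Finally I would assemble the count. Injectivity of $n\mapsto p(n)$ follows from the sandwich conclusion: if $p(n)=p(n')$ then the single subgroup $u_{i-1}(B_{p(n)})$ is simultaneously a conjugate of $A_n$ and of $A_{n'}$, whence $A_n$ and $A_{n'}$ are conjugate factors and $n=n'$ by (K2). Thus $n\mapsto p(n)$ is an isomorphism-type-preserving injection $\{1,\dots,j_{i-1}\}\hookrightarrow\{1,\dots,k_{i-1}\}$, giving $\#\{n\le j_{i-1}:A_n\cong Q\}\le\#\{p\le k_{i-1}:B_p\cong Q\}$ for every $Q$; letting $i\to\infty$ yields $a(Q)\le b(Q)$. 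The symmetric argument, interchanging the two sequences and using the mirror identity $d_i\circ f_{i+1}=g_i\circ d_{i+1}$ in place of the one above, gives $b(Q)\le a(Q)$. Hence $a(Q)=b(Q)$ for all $Q$, and matching index sets class by class produces the required bijection $\Phi$ with $A_j\cong B_{\Phi(j)}$.
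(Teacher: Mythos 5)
Your proposal is correct and follows essentially the same route as the paper's proof: both reduce the theorem to finite-stage, isomorphism-type-preserving injections between the factor index sets (hence multiplicity inequalities in both directions, then symmetry), and both obtain these injections by applying the Kurosh subgroup theorem twice---once going down the ladder, once going back up---followed by a conjugate-factor ``sandwich'' using the standard free-product facts. The only difference is bookkeeping: the paper works in a three-rung window and first pins the target factor index down a rung (its ``survival'' argument, $t\le k_s$) so that the up map is injective on it, whereas you factor $u_{i-1}=f_i\circ u_i$ on $H_{k_{i-1}}$ and establish the needed injectivity after the second Kurosh application; these are the same idea in slightly different packaging.
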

 
\begin{proof}
It suffices to show the following two claims.

{\bf Claim 1}: For each positive integer pair $(l,s)$ with $l \leq j_s$ and $s>1$ %there exists an integer $t$ so that 
there exists at least as many isomorphic copies of $A_l$ among $B_1,..., B_{k_s}$ as there are among $A_1,..., A_{j_s}$.

{\bf Claim 2}: For each positive integer pair $(r,m)$ with $r \leq k_m$  
there exists at least as many isomorphic copies of $B_r$ among $A_1,..., A_{l_m}$ as there are among $B_1,..., B_{k_m}$.

We prove claim 1 and by a similar argument one can prove claim 2. We will use the following facts: in a group $C=C_1 \ast C_2 \ast ...\ast C_n$ (1) no nontrivial free factor $C_i$ is a subgroup of a conjugate of some other free factor $C_j$ and (2) every conjugate of $C_i$ meets every other factor $C_j$, $j\neq i$ trivially. These facts can be verified using normal forms  
\cite[p.~175]{LySc}.
Consider the following commutative ladder diagram.

\[
\begin{diagram}
%\cdots & \lTo^{f_s} &
G_{j_{s}} & & \lTo^{f_{s+1}} & &
G_{j_{s+1}} & & \lTo^{f_{s+2}} & &
G_{j_{s+2}} & & \lTo^{f_{s+3}} & &
\cdots\\
& \luTo^{u_{s}} & & \ldTo^{d_{s+1}} & & \luTo^{u_{s+1}} & & \ldTo^{d_{s+2}}  & & \luTo^{u_{s+2}} \\
& & H_{k_s} & & \lTo^{g_{s+1}} & & H_{k_{s+1}}
& & \lTo^{g_{s+2}}  & & H_{k_{s+2}} & & \lTo^{g_{s+3}} & \cdots
\end{diagram}
\]

We observe that for $i>1$,  $d_i|_{G_{j_{i-1}}}:=d_i\circ (G_{j_{i-1}} \hookrightarrow G_{j_i})$ and $u_i|_{H_{k_{i-1}}}$ are monomorphisms since $f_i|_{G_{j_{i-1}}}$ and $g_i|_{H_{k_{i-1}}}$ are. Thus $A_i \cong d_{s+1}(A_i)$ and $B_k \cong u_{s+1}(B_k)$ for $i \leq j_s$ and $k \leq k_s$. 

Choose $l\leq j_s.$  We'll show there exists $t\leq k_{s}$ such that $u_{s+1}(B_t)$ is a conjugate of $A_l$ thus exhibiting $B_t$ as an isomorphic copy of $A_l.$ Since $d_{s+2}(A_l) \cong A_l$ is indecomposable and not infinite cyclic the Kurosh Subgroup Theorem \cite[p.~ 219]{Mas} gives $d_{s+2}(A_l) \leq \beta B_t \beta^{-1}$ for some $t \leq k_{s+1}$ and $\beta \in H_{k_{s+1}}.$ Moreover, since $A_l$ survives into $G_{j_s}$ we have $t\leq k_s.$ Then the restriction $u_{s+1}|_{B_t}$ is injective and thus so is $u_{s+1}|_{\beta B_t \beta^{-1}}$ and we know $u_{s+1}(\beta B_t \beta^{-1})$ is indecomposable and not infinite cyclic. We again apply Kurosh yielding $u_{s+1}(\beta B_t \beta^{-1})$ is a subgroup of a conjugate of some $A_r.$ Thus in $G_{j_{s+1}}$ we have   $A_l=f_{s+2}(A_l)=u_{s+1}d_{s+2}(A_l)\leq u_{s+1}(\beta B_t \beta^{-1}) \leq$ conjugate of $A_r.$ By our facts $l=r$ and we have $A_l = \beta B_t \beta^{-1} \cong B_t.$ More specifically, $t$ is the unique integer less than or equal to $k_s$ for which $u_{s+1}(B_t)$ is conjugate to $A_l.$

Thus we have shown the map 
\[
\Psi:\{1,2,...,j_s\} \rightarrow \{1,2,...,k_s\} \text{;   }  l\mapsto t
\]
is injective and $B_{\Psi(i)} \cong A_{i}$. This completes the proof of claim 1 and the proof of the proposition.

\end{proof}

We now apply Theorem \ref{CuKw type} to prove Theorem \ref{uncountable CSI's}.

\begin{proof}[Proof of Theorem \ref{uncountable CSI's}.]
Let $A_i$ and $B_j$ be groups such that $\text{pro-}\pi_1(\varepsilon(X_i))$ and \\ $\text{pro-}\pi_1(\varepsilon(Y_j))$ are pro-isomorphic to the constant sequences $\{A_i,id_{A_i}\}$ and $\{B_j, id_{B_j}\}.$ Then the hypothesis ``an element of $\mathcal{G}$
appears more times in one of the sequences, $\left\{  \operatorname*{pro}%
\text{-}\pi_{1}\left(  \varepsilon\left(  X_{i}\right)  \right)  \right\}  $
and $\left\{  \operatorname*{pro}\text{-}\pi_{1}\left(  \varepsilon\left(
Y_{j}\right)  \right)  \right\}  $, than it does in the other," translates as there does not exist the bijection $\Phi$ as in the conclusion of Theorem \ref{CuKw type}. Thus if we can show that $X_1\natural X_2\natural...$ and $Y_1\natural Y_2\natural...$ are 1-ended and semistable and also that $\text{pro-}\pi_1(\varepsilon(X_1\natural X_2\natural...))$ and $\text{pro-}\pi_1(\varepsilon(Y_1\natural Y_2\natural...))$ are of the forms $\{G_j,\varphi_j\}$ and $\{H_k,\psi_k\}$ in the statement of Theorem \ref{CuKw type} we will have the desired result.

For $i=1,2,...$ let $U_{i,1}\supset U_{i,2} \supset ...$ be a cofinal sequence of clean neighborhoods of infinity in $X_i$ so that $\{\pi_1(U_{i,j}), \tau_{i,j}\}\in \mbox{ pro-}\pi_1(\varepsilon(X_i))$ can be fit into a commuting ladder diagram with $\{A_i,id_{A_i}\}$

\begin{equation}
%\[
\begin{diagram}
%\begin{eqn}
%\cdots & \lTo^{f_s} &
\pi_1(U_{i,1}) & & \lTo^{\tau_{i,2}} & &
\pi_1(U_{i,2}) & & \lTo^{\tau_{i,3}} & &
\pi_1(U_{i,3}) & & \lTo^{\tau_{i,4}} & &
\cdots\\
& \luTo^{u_{i,1}} & & \ldTo^{d_{i,2}} & & \luTo^{u_{i,2}} & & \ldTo^{d_{i,3}}  & & \luTo^{u_{i,3}} \\
& & A_i & & \lTo^{id} & & A_i 
& & \lTo^{id}  & & A_i  & & \lTo^{id} & \cdots
\end{diagram}
\label{Ladder diagram of Xi}
\end{equation}
Here $\tau_{i,j}$ is the bonding homomorphism 
discussed in the definition of the fundamental group at infinity.

As in the definition of $X_1\natural X_2 \natural ...,$ for $i=1,2,...$ choose disjoint rays $r_{i,L},r_{i,R} \subset X_i$ and disjoint regular neighborhoods $N_{i,L},N_{i,R}\subset X_i$ of said rays with the additional property that for each $j$, $r_{i,x}$ meets $\Bd_{X_i} U_{i,j}$ transversely in a single point.

For $i=2,3,...$ and for $j=1,2,...$ let \[\hat{U}_{1,j}=U_{1,j}-\intr N_{1,R}\ 
 \ \text{and }\ \hat{U}_{i,j}=U_{i,j}-\intr (N_{i,L}\cup N_{i,R}).\]

We claim $\pi_1(\hat{U}_{i,j})\cong \pi_1(U_{i,j}).$ For $i,j=1,2,...$ and $x=L,R$ let $N_{i,x,j}= U_{i,j} \cap N_{i,x}$ which is homeomorphic to $r_{i,x}((a,\infty)) \times \mathbb{B}^{n-1}$ for some $a>0$ since $r_{i,x}$ meets $\Bd_{X_i}U_{i,j}$ transversely in a single point. We see that $\hat{U}_{i,j}\cap N_{i,x,j} \approx r_{i,x}((a,\infty))\times S^{n-2}$ which is simply connected as $n \geq 4.$ Thus \[\pi_1(U_{i,j})=\pi_1(\hat{U}_{i,j}\cup N_{i,L,j}\cup N_{i,R,j})\approx \pi_1(\hat{U}_{i,j}).\]

For $i=1,2,..,$ let $\hat{X}_i=X_i-N_{i,L}\approx X_i$ and \begin{eqnarray}
W_i &=& \hat{U}_{1,i}\cup_\phi \hat{U}_{2,i}\cup_\phi... \cup_\phi \hat{U}_{i,i}\cup_\phi \hat{X}_{i+1}\natural X_{i+2} \natural X_{i+3}... \nonumber 
\end{eqnarray}

Observe that $W_1,W_2,...$ form a cofinal sequence of connected neighborhoods of infinity in $X_1\natural X_2 \natural ...$ and thus if $U$ is a neighborhood of infinity in $X_1\natural X_2 \natural ...$ then $U \supset W_i$ for some $i.$ This shows $X_1\natural X_2 \natural ...$ is 1-ended. 
Then as \[\hat{U}_{i,j} \cap \hat{U}_{i+1,j}=\partial N_{i,L,j}=\partial N_{i+1,R,j} \approx (a,\infty)\times S^{n-2},\]
\[\hat{U}_{i,i}\cap_\varphi \hat{X}_{i+1}=\partial N_{i,R,j},\]
and the $X_i$ are all simply connected we have
\[\pi_1(W_j)\cong \pi_1(U_{1,j})\ast \pi_1(U_{2,j}) \ast ... \ast \pi_1(U_{j,j}).\]

We will show $\{\pi_1(W_j),\tau_{1,j}\}$ is pro-isomorphic to $\{G_j,\varphi_j\}.$ For our base ray we choose $r_1$ the chosen base ray for $X_1.$ Let
\[1_{i,j}:\pi_1(U_{i,j})\rightarrow 1,\]
\[d_j'=d_{1,j}\ast d_{2,j} \ast ... \ast d_{j,j-1}*1_{j,j},\]
\[d_j':\pi_1(U_{1,j}) \ast \pi_1(U_{2,j}) \ast...\ast \pi_1(U_{j,j})\rightarrow A_1\ast A_2\ast...\ast A_{j-1},\]
\[u_j'=u_{1,j}\ast u_{2,j} \ast ... \ast u_{j,j-1}*u_{j,j}, \]
\[u_j':\pi_1(U_{1,j}) \ast \pi_1(U_{2,j}) \ast...\ast \pi_1(U_{j,j})\rightarrow A_1\ast A_2\ast...\ast A_{j}, \mbox{ and }\]
\[\tau_j'=\tau_{1,j}\ast\tau_{2,j}\ast...\ast\tau_{j-1,j}\ast1_{j,j}\]
\[ \tau_j':\pi_1(U_{1,j}) \ast \pi_1(U_{2,j}) \ast...\ast \pi_1(U_{j,j})\rightarrow \pi_1(U_{1,j}) \ast \pi_1(U_{2,j}) \ast...\ast \pi_1(U_{j-1,j})\]
where $d_{i,j},$ $u_{i,j},$ and $\tau_{i,j}$ are the ``up",``down", and bonding homomorphisms of the previous ladder diagram (\ref{Ladder diagram of Xi}). We then have the following commutative diagram:
\begin{diagram}
%\begin{eqn}
%\cdots & \lTo^{f_s} &
\pi_1(U_{1,1}) & & \lTo^{\tau_2'} & &
\pi_1(U_{1,2})\ast \pi_1(U_{2,2}) & & \lTo^{\tau_3'} & &
%*_{i=1}^3\pi_1(U_{i,3})&&
%\pi_1(U_{1,2})\ast \pi_1(U_{2,2})\ast\pi_1(U_{3,3}) & & 
%\lTo^{\tau_{4}'} & &
\cdots\\
& \luTo^{u_1'} & & \ldTo^{d_2'} & & \luTo^{u_2'}\\ %& & \ldTo^{d_3'}  & & \luTo^{u_3'} \\
& & A_1 & & \lTo^{\varphi_2} & & A_1\ast A_2 
& & \lTo^{\varphi_3}  & & \cdots
% A_1\ast A_2\ast A_3  & & \lTo^{\varphi_4} & \cdots
\end{diagram}

Thus $X_1\natural X_2 \natural ...$ is semistable and $\{\pi_1(W_j),\tau_j'\}$ is pro-isomorphic to $\{G_j,\varphi_j\}.$ Similarly, one can show pro-$\pi_1(\varepsilon(Y))$ is of the form $\{H_k,\psi_k\}.$
\end{proof}

\begin{thm}
Let $\mathcal{G}$ be a collection of distinct indecomposable groups none of which are infinite cyclic and let $\left\{  C_{i}^{n}\right\}  $and $\left\{
D_{j}^{n}\right\}  $ be countably infinite collections of compact simply connected
$n$-manifolds with connected boundaries that have fundamental groups lying in
$\mathcal{G}$. If any element of $\mathcal{G}$ appears more times in one of
the sequences, $\left\{  \pi_{1}\left(  \partial C_{i}^{n}\right)  \right\}  $
and $\left\{  \pi_{1}\left(  \partial D_{j}^{n}\right)  \right\}  $, than it
does in the other, then
\[
\operatorname*{int}\left(  C_{1}\overset{\partial}{\#}C_{2}\overset{\partial
}{\#}C_{3}\overset{\partial}{\#}\cdots\right)  \not \approx \operatorname*{int}\left(
D_{1}\overset{\partial}{\#}D_{2}\overset{\partial}{\#}D_{3}\overset{\partial
}{\#}\cdots\right)  .
\]
%\bigskip
\label{Craig's no.2}
\end{thm}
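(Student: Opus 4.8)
The plan is to deduce this as a corollary of the more general Theorem \ref{uncountable CSI's}, taking the open manifolds there to be the interiors of the given compact ones. First I would set $X_i=\intr C_i$ and $Y_j=\intr D_j$ and regard each as an oriented PL open $n$-manifold (a simply connected manifold is orientable; for the CSI constructions to behave we work with $n\geq 4$). The boundary collar makes the inclusion $\intr C_i\hookrightarrow C_i$ a homotopy equivalence, so each $X_i$ is simply connected, and since $C_i$ is compact with connected boundary its interior is $1$-ended and stable, as noted after Theorem \ref{semistable}. The one identification requiring attention is the value of the end invariant: a collar $\partial C_i\times[0,1)$ supplies a cofinal sequence of connected neighborhoods of infinity $\partial C_i\times(1-\tfrac1k,1)$ in $\intr C_i$, each with fundamental group $\pi_1(\partial C_i)$ and with every bonding homomorphism the identity. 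Hence $\text{pro-}\pi_1(\varepsilon(X_i))$ is stable and pro-isomorphic to the constant sequence on $\pi_1(\partial C_i)\in\mathcal{G}$, and likewise $\text{pro-}\pi_1(\varepsilon(Y_j))$ is pro-isomorphic to the constant sequence on $\pi_1(\partial D_j)$.

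With these identifications the multiplicity hypothesis of the present theorem -- that some element of $\mathcal{G}$ occurs more often among $\{\pi_1(\partial C_i)\}$ than among $\{\pi_1(\partial D_j)\}$ (or conversely) -- becomes verbatim the multiplicity hypothesis of Theorem \ref{uncountable CSI's} applied to the collections $\{\text{pro-}\pi_1(\varepsilon(X_i))\}$ and $\{\text{pro-}\pi_1(\varepsilon(Y_j))\}$. I would therefore invoke Theorem \ref{uncountable CSI's} to conclude that $X_1\natural X_2\natural\cdots$ and $Y_1\natural Y_2\natural\cdots$ are $1$-ended and semistable, and that their fundamental pro-groups at infinity are not pro-isomorphic.

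It then remains to transport this back to boundary connected sums. Here I would use the earlier remark that the connected sum at infinity of interiors of manifolds with connected boundary is homeomorphic to the interior of their boundary connected sum, giving
\[\intr(C_1\bdrysum C_2\bdrysum\cdots)\approx X_1\natural X_2\natural\cdots \quad\text{and}\quad \intr(D_1\bdrysum D_2\bdrysum\cdots)\approx Y_1\natural Y_2\natural\cdots.\]
Because both open manifolds are $1$-ended and semistable, their fundamental pro-group at infinity is independent of base ray (Theorem \ref{semistable}) and is a topological invariant, so a homeomorphism between the two interiors would induce a pro-isomorphism of their end pro-groups, contradicting the previous paragraph. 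This yields $\intr(C_1\bdrysum\cdots)\not\approx\intr(D_1\bdrysum\cdots)$.

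The routine bookkeeping -- orientations and PL choices of rays, collars, and regular neighborhoods -- is harmless, and the corresponding independence is already packaged in the corollary to Lemma \ref{indep of rays}. The two points genuinely deserving care are the passage from homeomorphism to pro-isomorphism of end invariants, which is precisely why semistability (furnished by Theorem \ref{uncountable CSI's}) is needed, and the verification that the cited remark identifying the CSI of interiors with the interior of the BCS is being applied in the countably infinite case rather than merely for finite sums; I expect the latter to be the main obstacle, handled by checking that the infinite constructions in the respective definitions use compatible collar/ray data.
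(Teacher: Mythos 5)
Your proposal is correct and follows essentially the same route as the paper: the paper's proof also sets $X_i=\intr C_i$, $Y_j=\intr D_j$, observes these are $1$-ended and stable so that Theorem \ref{uncountable CSI's} applies, and then invokes the identification of the CSI of interiors with the interior of the BCS. Your write-up merely makes explicit two steps the paper leaves implicit (the collar computation identifying $\text{pro-}\pi_1(\varepsilon(\intr C_i))$ with the constant sequence on $\pi_1(\partial C_i)$, and the use of semistability to make the end invariant a ray-independent topological invariant), which is harmless elaboration rather than a different argument.
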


\begin{proof}
Since $C_i$ and $D_i$ are compact with connected boundaries $X_i=\intr C_i$ and $Y_i=\intr D_i$ are 1-ended and stable and thus meet the hypotheses of Theorem \ref{uncountable CSI's}. Since the CSI's of the interiors are homeomorphic to the interiors of the BCS's we have the desired result.
\end{proof}

Theorem \ref{uncountable Jester sums}, which we repeat below, can now be seen to be a corollary to Theorem \ref{Craig's no.2}.\\
\textbf{Theorem \ref{uncountable Jester sums}.} \emph{The set of homeomorphism classes of all possible infinite CSI's of interiors of Jester's manifolds is uncountable.}
%\end{thm*}

In the next subsection we will show that these manifolds split.

\subsection{Sums of Splitters Split} \label{Sums of Splitters Split}
In this subsection we demonstrate our main result: \\

\noindent
\textbf{Theorem \ref{uncountable open 4-splitters}.} \emph{There exists an uncountable collection of contractible open 4-manifolds which split as $\mathbb{R}^4 \cup_{\mathbb{R}^4} \mathbb{R}^4.$} \\

We'll demonstrate the above result by showing that the infinite CSI $X_1\natural X_2 \natural ...$ of certain types of splitters $X_i\approx \mathbb{R}^n \cup_{\mathbb{R}^n} \mathbb{R}^n$ $(n\geq4)$ also splits. Our argument consists of choosing our ray, regular neighborhood pairs in the definition of the CSI to lie in the intersections (the $C_i$'s) of the splittings $A_i \cup_{C_i} B_i\approx \mathbb{R}^n \cup_{\mathbb{R}^n} \mathbb{R}^n.$ This will yield the CSI to be of the form 
\begin{equation}
(A_1\natural A_2\natural...)\cup_{C_1\natural C_2\natural...} (B_1\natural B_2\natural ...)
\label{CSI splitting}
\end{equation}
which is itself an open splitting. We apply this result to our infinite sums of Jester's manifolds, an uncountable collection. The work comes in showing the existence of the desired ray, regular neighborhood pair mentioned above. We desire, for all $i$, that our ray not only lies in $C_i$ but also that it is proper in both $A_i$ and $B_i$ thus ensuring we obtain a splitting of the form (\ref{CSI splitting}).

\begin{prop}
If $\Sigma$ is a smooth properly embedded line in $\mathbb{R}^n$ and $M^{n-1}$ is a closed smooth submanifold of $\mathbb{R}^n$ intersecting $\Sigma$ transversely then $\left|\Sigma \cap M^{n-1} \right|$ is even. 
\end{prop}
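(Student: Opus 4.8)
The plan is to exhibit the intersection points as the boundary of a compact $1$-manifold and then invoke the classification of compact $1$-manifolds, whose boundary always has even cardinality. First I would record that the intersection is finite: since $\Sigma\colon\mathbb{R}\to\mathbb{R}^n$ is a proper embedding and $M^{n-1}$ is compact, the preimage $\Sigma^{-1}(M^{n-1})$ is compact, and transversality makes it a compact $0$-manifold, hence finite; because $\Sigma$ is injective this set is in bijection with $\Sigma\cap M^{n-1}$, so it suffices to count $\Sigma^{-1}(M^{n-1})$.

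The key geometric input is that a closed hypersurface in Euclidean space bounds. By the (smooth, generalized) Jordan--Brouwer separation theorem, a connected closed hypersurface $M^{n-1}\subset\mathbb{R}^n$ separates $\mathbb{R}^n$ and is the boundary of a compact codimension-$0$ submanifold-with-boundary $W\subset\mathbb{R}^n$, i.e. $\partial W=M^{n-1}$. Note that no orientability hypothesis is needed for this counting argument, only that $M^{n-1}$ bounds.

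With $W$ in hand I would apply the transverse-preimage theorem for manifolds with boundary. Since $\Sigma$ is transverse to $M^{n-1}=\partial W$ (transversality to $\intr W$ is automatic, as $W$ has codimension $0$), the set $\Sigma^{-1}(W)=\{t\in\mathbb{R}:\Sigma(t)\in W\}$ is a $1$-dimensional submanifold of $\mathbb{R}$ with boundary $\partial\bigl(\Sigma^{-1}(W)\bigr)=\Sigma^{-1}(M^{n-1})$. Properness of $\Sigma$ together with compactness of $W$ forces $\Sigma^{-1}(W)$ to be compact, so it is a finite disjoint union of closed intervals (no circles can occur, since it lies in $\mathbb{R}$). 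Such a union has an even number of endpoints, and these endpoints are precisely $\Sigma^{-1}(M^{n-1})$, so $\left|\Sigma\cap M^{n-1}\right|$ is even. If $M^{n-1}$ is disconnected I would run this argument on each connected component $M_j$ separately---each connected closed hypersurface bounds---and sum, using $\left|\Sigma\cap M^{n-1}\right|=\sum_j\left|\Sigma\cap M_j\right|$, a sum of even numbers.

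The step I expect to be the main obstacle is securing that $M^{n-1}$ bounds a compact region $W$ with $\partial W=M^{n-1}$. This is where the ambient $\mathbb{R}^n$ is essential (equivalently, $H_{n-1}(\mathbb{R}^n)=0$ forces the hypersurface to be null-homologous and hence to separate), and it is the only place where the hypothesis that $M^{n-1}$ sits inside Euclidean space, rather than in an arbitrary ambient manifold, is genuinely used. Once bounding is established, the remaining ingredients---the transversal-preimage theorem and the classification of compact $1$-manifolds---are entirely standard.
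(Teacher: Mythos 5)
Your proof is correct and takes essentially the same approach as the paper: both arguments rest on the Jordan--Brouwer separation theorem applied to the closed hypersurface $M^{n-1}$, followed by a parity count of the crossings of the proper line. Your formalization via the transverse-preimage theorem and the classification of compact $1$-manifolds is simply a rigorous rendering of the paper's observation that at each transverse intersection $\Sigma$ passes from the inside of $M$ to the outside or vice versa, while properness forces both ends of $\Sigma$ to lie outside.
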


\begin{proof}
As $M^{n-1}$ is a codimension $1$, closed submanifold of Euclidean space, the Jordan-Brouwer separation theorem gives that it has an inside and an outside \cite{Ale}. Since at each intersection point of $\Sigma$ with $M,$ $\Sigma$ meets $M$ transversely,  $\Sigma$ passes from $M$'s inside to $M$'s outside or vice versa. 
\end{proof}

\begin{lemma} Suppose $M^n$ is a contractible $n$-manifold which splits as $M^n=A \cup_{C} B$, $A,B,C \approx \mathbb{R}^n.$ Then there exists a ray $r$ in $C$ which is also proper in both $A$ and $B.$
\end{lemma}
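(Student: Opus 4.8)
The plan is to first recast the requirement in a more workable form. I claim a ray $r\subset C$ is proper in both $A$ and $B$ if and only if it is proper in $M$. One direction is immediate: compacta of $A$ (or of $B$) are compacta of $M$, so properness in $M$ forces properness in $A$ and in $B$. Conversely, suppose $r$ is proper in $A$ and $B$ but some compact $K\subset M$ has $r^{-1}(K)$ noncompact; then there is $t_k\to\infty$ with $r(t_k)\to p\in M=A\cup B$. Taking (say) $p\in A$ and a compact neighborhood $K'\subset A$ of $p$, the points $r(t_k)\in C\subset A$ eventually lie in $K'$, contradicting properness of $r$ in $A$. So it suffices to produce a ray in $C$ that is proper in $M$.

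Next I would fix the geometry. Write $P=\cl_M(A)\setminus A$ and $Q=\cl_M(B)\setminus B$ for the two frontier walls; then $P\subset B$, $Q\subset A$, $P\cap C=Q\cap C=\emptyset$, and the frontier of $C$ in $M$ is $P\cup Q$ (in the PL/smooth setting these are bicollared $(n-1)$-submanifolds). Two point-set facts drive the argument. First, $M$ is $1$-ended: since $M$ is a contractible open $n$-manifold ($n\ge 2$), Poincar\'e--Lefschetz duality gives $H^1_c(M)\cong H_{n-1}(M)=0$, whence $M$ has a single end. Second, $C$ is unbounded in $M$: if $\cl_M C$ were compact then $M\setminus\cl_M C=(A\setminus\cl_M C)\sqcup(B\setminus\cl_M C)$ would be a separation into two pieces, each unbounded in $M$ (otherwise $A$ or $B$ would be compact), contradicting $1$-endedness. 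Thus $C$ genuinely reaches the end of $M$, so a ray in $C$ proper in $M$ is not excluded for any trivial reason.

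The heart of the argument is to upgrade the escaping sequence guaranteed by unboundedness into an escaping \emph{ray}, that is, to build a proper ray in $C$ whose only accumulation in (the end-compactification of) $M$ is the single end of $M$, avoiding $P\cup Q$ entirely. Starting from a proper ray $\lambda$ in $C$ (available since $C\approx\mathbb R^n$), its $M$-accumulation set lies in $P\cup Q$ whenever $\lambda$ fails to be proper in $M$. Using the bicollars of $P$ and $Q$ I would push $\lambda$ off each wall, rerouting it through the collars so that it runs essentially parallel to both walls and out toward the end of $M$. The Proposition on even intersection numbers is the device that keeps this controlled: after capping a bounded portion of a wall to a sphere $S$ inside the Euclidean piece $A$ (or $B$), a properly embedded line in that piece meets $S$ an even number of times, so the rerouted ray cannot be trapped on the bounded side of the wall; after finitely many corrections it lies in the common exterior of the relevant spheres, and its $M$-distance to a fixed basepoint increases without bound.

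The main obstacle is precisely this last construction: controlling the limiting behavior at the two walls \emph{simultaneously}. Forcing the ray off $P$ tends to drive it toward $Q$ and conversely, so one must thread it between the two walls all the way to the end of $M$ without ever being pushed back into a compact region. The parity Proposition supplies exactly the no-entrapment mechanism needed here (a proper line crosses a separating compact hypersurface an even number of times, hence cannot remain on its bounded side). Making the rerouting rigorous---in particular organizing it along a cofinal exhaustion of $M$ so that the resulting concatenation is a single proper embedded ray, and verifying at each stage that the corrections stay inside $C$---is the technical core, using the collar structure of $P$ and $Q$ together with the $1$-endedness of $M$ repeatedly.
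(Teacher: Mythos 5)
Your preliminary reductions are all correct and cleanly argued: the equivalence ``proper in $A$ and $B$ $\Leftrightarrow$ proper in $M$'' holds (both directions as you state them), the frontier of $C$ in $M$ is indeed the two disjoint walls $P\cup Q$, $M$ is $1$-ended, and $\cl_M(C)$ is noncompact. But none of this is the hard part, and the step that actually produces the ray is missing. Your plan---start with a proper ray $\lambda$ in $C$, then ``push it off'' $P$ and $Q$ through bicollars with the parity proposition as a ``no-entrapment mechanism''---is not a construction: the rerouting is never defined, ``capping a bounded portion of a wall to a sphere inside $A$'' is not justified (the walls are noncompact hypersurfaces, and it is unclear what the cap is or why it lies in $A$), the parity proposition applies to a \emph{closed} hypersurface met transversely by a properly embedded \emph{line}, not to the objects you describe, and the claim that ``finitely many corrections'' suffice is unsupported---a ray accumulating on unbounded portions of both walls may require infinitely many modifications, and arranging that an infinite sequence of reroutings converges to a single embedded proper ray is exactly the difficulty you yourself flag as unresolved. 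Identifying an obstacle is not the same as overcoming it, so as written this is a genuine gap.

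The paper avoids the two-wall threading problem entirely by a different mechanism: build a smooth map $h:\cl_M(C)\rightarrow[-1,1]$ with $h^{-1}(-1)=S$ and $h^{-1}(1)=T$ (the two walls), arranged so that a fixed arc $\alpha$ running from $S$ to $T$ is met transversely by the level sets. Pick a regular value $v\in(-1,1)$ and let $V$ be the component of $h^{-1}(v)$ meeting $\alpha$; then $V$ is a hypersurface, closed as a subset of $C$, meeting the properly embedded line $\intr\alpha$ exactly once transversely. Here is where the parity proposition does its (single, clean) job: a \emph{compact} such hypersurface would meet $\intr\alpha$ an even number of times, so $V$ is noncompact, hence contains a ray $r$ proper in $V$ and therefore proper in $C$. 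Properness in $A$ and $B$ then comes for free from the slab trick you never get to: $r$ lies in $h^{-1}([-1+\epsilon_1,1-\epsilon_2])$, which is closed in $M$ and disjoint from both walls, so for any compact $K\subset A$ the set $K\cap h^{-1}([-1+\epsilon_1,1-\epsilon_2])$ is a compact subset of $C$, which $r$ eventually leaves. In short, instead of correcting a badly behaved ray, one finds a level set that is forced to go to infinity while staying a definite ``$h$-distance'' from both walls; that one idea replaces the entire rerouting scheme your proposal leaves unfinished.
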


\begin{proof}
We will describe a proof that uses differential topology. Analogous proofs are possible in the PL or topological categories. Let $S=A \cap \Bd _{M^n} (C)$ and $T= B \cap \Bd _{M^n} (C)$ so that $\Bd_{M^n}(C)=S \sqcup T.$
Let $\overline{C}=\cl _{M^n}(C)$ so $\overline{C}=C\cup S \cup T.$ Note $S$ and $T$ are closed in $\overline{C}.$ Let $\alpha=[-1,1]$ be an arc in $\overline{C}$ so that $\alpha \cap S = \{-1\}$ and $\alpha \cap T = \{1\}.$ Choose $N \approx \intr \alpha \times \mathbb{B}^{n-1}$ a tapered product neighborhood of $\intr \alpha$ in $C.$ That is, $\Bd_{M^n} (N)-N= \partial \alpha.$

Let $f:S \cup N \cup T \rightarrow \alpha$ be a retraction so that $f^{-1}(-1)=S$, $f^{-1}(1)=T$ and for $x \in \intr \alpha,$ $f({x} \times \mathbb{B}^{n-1})=\{x\}.$ That is, $f$ collapses $N$ along product lines. Note that for $x \in \intr \alpha,$ $f^{-1}(x)$ intersects $\alpha$ transversely precisely at $x.$ We then apply the Tietze extension theorem to get a retraction $f:\overline{C} \rightarrow \alpha.$ We choose such an $f$ that is smooth. We will now adjust $f$ with the aim that $C$ maps to $\intr \alpha.$ Let $W=f^{-1}([-1,0]) \cup N \cup T$ and $b \in C-W.$ Via Urysohn's Lemma choose $\eta:\overline{C} \rightarrow [0,1]$ such that $\eta^{-1}(0) =W$ and $\eta^{-1}(1)=\{b\}.$ Let $g=f-\eta$ so $g|_W=f|_W.$ If $x \notin W$ then $\eta(x)>0$ and $g(x)=f(x)-\eta(x)<f(x)-0\leq 1.$ Thus $g^{-1}(1)=T.$ Similarly we can adjust $g$ to get, say $h$, so $h^{-1}(1)=T,$ $h^{-1}(-1)=S,$ and $h|_{S\cup N \cup T}=f|_{S\cup N \cup T}.$

Via Sard's Theorem we can choose a regular value $v$ of $h$ in $\intr \alpha$ and let $V$ be the component of $h^{-1}(v)$ containing $v$ \cite[p.~227]{Kos}. We observe that $V$ is a smooth $(n-1)$-submanifold of $C$ without boundary which is closed 
in $C$ and intersects $\alpha$ (transversely) precisely at $v$. 
If $V$ were compact, the previous proposition would yield that the number of intersections of the $C$ properly embedded line $\intr \alpha$ with $V$ would be even. Thus $V$ is noncompact and hence is $C$ unbounded. %[Heine-Borel] 
We claim $V$ is embedded properly in $C.$ For suppose $K$ is a compactum in $C$ and let $\iota:V \hookrightarrow C$ be the inclusion map. Then $V\cap K=\iota^{-1}(K)$ is a closed subset of $K$ and is hence compact thus showing $\iota$ is proper. There then exists a ray $r$ in $V$ which is proper in $C.$

We now show $r$ is proper in both $A$ and $B.$ Let $K$ be a compact subset of $A$. We claim the end of $r$ lies outside of $K.$ Again by Sard, there exists $\epsilon_1$ and $\epsilon_2$ sufficiently small so that $-1+\epsilon_1<v<1-\epsilon_2$ are regular values of $h.$ Let $T'=h^{-1}([-1+\epsilon_1,1-\epsilon_2]),$ a closed subset of $C.$ Then $K'=K\cap T'$ is a compact subset of $C.$ Therefore, $r$ eventually stays outside of $K'$. But since $r$ lives in $T',$ when it leaves $K'$ it also leaves $K.$
Thus $r$ is proper in $A$ and a similar argument can be made to show $r$ is proper in $B.$
\end{proof}

Recall Proposition \ref{ints of splitters split} which says that the interior of a closed splitter is an open splitter.

\begin{cor} Suppose $M^n$ is a compact contractible $n$-manifold such that \\
${M=A \cup_C B,}$ with $A,B,C \approx \mb{B}^n.$ Then there exists a ray $r$ in \emph{int}$C$ which is also proper in both \emph{int}$A$ and \emph{int}$B.$
\end{cor}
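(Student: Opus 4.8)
The plan is to deduce the corollary from the immediately preceding Lemma by passing to manifold interiors, with Proposition \ref{ints of splitters split} supplying the requisite open splitting. First I would note that since $M$ is a compact contractible $n$-manifold, its interior $\intr M$ is a contractible open $n$-manifold (it is homotopy equivalent to $M$ via a boundary collar). The goal is then to produce an open splitting of $\intr M$ whose three pieces are precisely $\intr A$, $\intr B$, and $\intr C$, so that the Lemma applies verbatim.

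Invoking Proposition \ref{ints of splitters split}, the closed splitting $M = A \cup_C B$ with $A,B,C \approx \mathbb{B}^n$ yields that $\intr M$ is an open splitter. The one point requiring care is that the resulting open splitting carries the pieces $\intr A$, $\intr B$, $\intr C$ (the manifold interiors of the closed balls) rather than some unrelated triple of open balls. I would verify the three facts $\intr A \cup \intr B = \intr M$, $\intr A \cap \intr B = \intr C$, and $\intr A, \intr B, \intr C \approx \mathbb{R}^n$. The last is immediate, since the manifold interior of a closed $n$-ball is $\mathbb{R}^n$. For the first two the content is a local analysis at points of $\intr M$ lying on the frontier $\Bd_M C$: such a point lies on the boundary sphere of exactly one of $A$, $B$ and in the manifold interior of the other, hence is covered by $\intr A \cup \intr B$; and any point of $\intr A \cap \intr B$ is an interior point of $M$ lying in $A \cap B = C$ and off both boundary spheres, hence in $\intr C$. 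Since this is exactly the identification underlying Proposition \ref{ints of splitters split}, I would either cite \cite[Prop~1.2.1]{Spa} for it or record the short verification above.

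With the open splitting $\intr M = \intr A \cup_{\intr C} \intr B$ in hand, the proof concludes by a single application of the Lemma to the contractible open $n$-manifold $\intr M$: it produces a ray $r$ in $\intr C$ that is proper in both $\intr A$ and $\intr B$, which is precisely the assertion of the corollary.

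I expect the only genuine obstacle to be the bookkeeping in the second paragraph, namely confirming that the open splitting inherited from $M = A \cup_C B$ really has $\intr A$, $\intr B$, $\intr C$ as its pieces (in particular that $\intr A \cap \intr B$ picks up no stray frontier points of $C$ and that the union exhausts $\intr M$ near $\partial M$ and near $\Bd_M C$). Everything else is a direct appeal to the two preceding results.
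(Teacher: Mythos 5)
Your proposal is correct and is essentially the paper's own (implicit) argument: the corollary is stated immediately after the remark recalling Proposition \ref{ints of splitters split} precisely because it follows by applying that proposition to get the open splitting $\intr M = \intr A \cup_{\intr C} \intr B$ and then applying the preceding Lemma to the contractible open manifold $\intr M$. The identification of the pieces of the open splitting with the manifold interiors $\intr A$, $\intr B$, $\intr C$ (the only point you flag as needing care, and which is genuinely more than bookkeeping at possible ``corner'' points of $\partial A \cap \partial B$) is exactly the content of \cite[Prop~1.2.1]{Spa}, so deferring to that citation, as you propose, is what the paper does as well.
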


\begin{prop} Let $M_1$ and $M_2$ be contractible, piecewise linear, open  \\ $n$-manifolds $(n\geq4)$ which split as $M_i=A_i \cup_{C_i} B_i$, with $A_i,B_i,C_i \approx \mathbb{R}^n.$ Further let $r_i\subset C_i$ be a ray in $C_i$ which is also proper in both $A_i$ and $B_i.$  Then the connected sum at infinity of $(M_1,r_1)$ and $(M_2,r_2)$ also splits: $(M_1,r_1)\natural (M_2,r_2) =A \cup_{C} B$ with $A,B,C \approx \mathbb{R}^n$.
\label{CSI of splitters}
\end{prop}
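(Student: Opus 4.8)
The plan is to build the connected sum at infinity out of regular neighborhoods of $r_1$ and $r_2$ that have been pushed entirely inside the overlap pieces $C_1$ and $C_2$, so that the single gluing wall of the CSI lies in the $C$-part of each summand. Then the three sums $A_1\natural A_2$, $B_1\natural B_2$, and $C_1\natural C_2$ get carried out along one and the same identification, producing a splitting of the form (\ref{CSI splitting}).

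First I would fix the regular neighborhoods. Since $r_i\subset C_i$ and $r_i$ is proper in $A_i$ and $B_i$, it is in particular proper in $C_i=A_i\cap B_i$, so I choose a regular neighborhood $N_i$ of $r_i$ in the open $n$-manifold $C_i$; by Proposition~\ref{nhd of proper ray} we have $(N_i,\partial N_i)\approx(\mathbb{R}^n_+,\mathbb{R}^{n-1})$. The key observation is that because $C_i$ is open in $M_i$ (hence open in $A_i$ and in $B_i$), this single $N_i$ is simultaneously a regular neighborhood of $r_i$ in $C_i$, in $A_i$, in $B_i$, and in $M_i$, and both $N_i$ and its frontier $\partial N_i$ sit inside $C_i$. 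Since the CSI is independent of the choice of regular neighborhoods, I may form $(M_1,r_1)\natural(M_2,r_2)=(M_1-\intr N_1)\cup_f(M_2-\intr N_2)$ using these particular $N_i$, with $f:\partial N_1\to\partial N_2$ an orientation-reversing homeomorphism.

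Next I would set $A=(A_1-\intr N_1)\cup_f(A_2-\intr N_2)$, $B=(B_1-\intr N_1)\cup_f(B_2-\intr N_2)$, and $C=(C_1-\intr N_1)\cup_f(C_2-\intr N_2)$, using the restriction of the one map $f$ in each case (legitimate precisely because $\partial N_i\subset C_i\subset A_i\cap B_i$). As $N_i\subset C_i$, we have $(A_i-\intr N_i)\cup(B_i-\intr N_i)=M_i-\intr N_i$ and $(A_i-\intr N_i)\cap(B_i-\intr N_i)=C_i-\intr N_i$; gluing by $f$ then yields $A\cup B=(M_1,r_1)\natural(M_2,r_2)$ and $A\cap B=C$. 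Moreover each of $A$, $B$, $C$ is visibly the CSI of the corresponding summands, e.g. $A=(A_1,r_1)\natural(A_2,r_2)$, a sum that is defined exactly because $r_i$ is proper in $A_i$.

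Finally I would pin down the homeomorphism types. Each $A_i,B_i,C_i\approx\mathbb{R}^n$ is contractible, $1$-ended, and semistable, so by the corollary to Lemma~\ref{indep of rays} the sums $A$, $B$, $C$ do not depend on the rays used; replacing $r_i$ by a standard ray and invoking Note~\ref{csi of nspaces} ($\mathbb{R}^n\natural\mathbb{R}^n\approx\mathbb{R}^n$) gives $A,B,C\approx\mathbb{R}^n$, whence $(M_1,r_1)\natural(M_2,r_2)=A\cup_C B$ is the desired splitting. The hard part will be the bookkeeping of the first two steps: verifying that $N_i$ and $\partial N_i$ can genuinely be trapped inside $C_i$ so that a \emph{single} gluing map $f$ effects all three sums at once — equivalently, that the frontier walls $S_i,T_i$ of the summands stay clear of the sum region — and that such an $N_i$ really serves as a regular neighborhood in each of $A_i,B_i,M_i$ simultaneously. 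Once this compatibility is established, the identification of homeomorphism types is immediate from the cited results.
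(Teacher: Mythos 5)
Your proposal is correct and is essentially the paper's own proof: both arguments choose, for each $i$, one neighborhood $N_i\subset C_i$ of $r_i$ serving simultaneously as a regular neighborhood in $A_i$, $B_i$, $C_i$ (and $M_i$), glue along a single orientation-reversing homeomorphism $f:\partial N_1\rightarrow \partial N_2$, identify $A$, $B$, $C$ as the CSIs $(A_1,r_1)\natural(A_2,r_2)$, etc., and conclude via Note \ref{csi of nspaces} that each is $\mathbb{R}^n$. One small caution: openness of $C_i$ alone does not make an arbitrary regular neighborhood taken in $C_i$ work (it need not be closed in $M_i$); it is the properness of $r_i$ in $A_i$ and $B_i$ that lets you choose a sufficiently tapered $N_i$ -- a point you flag as ``the hard part'' and which the paper likewise leaves implicit.
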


An immediate corollary is:

\begin{cor} Let $M_1$ and $M_2$ be contractible, piecewise linear, semistable, open $n$-manifolds $(n\geq4)$ which split as $M_i=A_i \cup_{C_i} B_i$, $A_i,B_i,C_i \approx \mathbb{R}^n.$  Then the connected sum at infinity of $M_1$ and $M_2$ also splits: $M_1\natural M_2 =A \cup_{C} B$ with $A,B,C \approx \mathbb{R}^n$.
\end{cor}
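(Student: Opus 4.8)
The plan is to run the connected-sum-at-infinity construction entirely inside the common pieces $C_1$ and $C_2$, so that the given splittings $M_i=A_i\cup_{C_i}B_i$ are inherited by the sum summand-by-summand. First I would record that the hypotheses already force $r_i$ to be proper in $M_i$: if a compact $K\subset M_i=A_i\cup B_i$ met the image of $r_i$ in an unbounded parameter set, a convergent subsequence would have its limit in $A_i$ or in $B_i$, contradicting properness of $r_i$ there; hence the sum $(M_1,r_1)\natural(M_2,r_2)$ is defined. Next I would choose a (closed) regular neighborhood $N_i$ of $r_i$ computed in $C_i$, with $N_i\subset C_i$. Because $B_i$ is open in $M_i$, the set $C_i=A_i\cap B_i$ is open in $A_i$, and symmetrically open in $B_i$ and in $M_i$; a regular neighborhood lying in the open set $C_i$ is therefore simultaneously a regular neighborhood of $r_i$ in each of $A_i$, $B_i$, $C_i$, and $M_i$, and its frontier $\partial N_i$ is the same set in all four. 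Since $r_i$ is proper in each of $A_i$, $B_i$, $C_i$, Proposition \ref{nhd of proper ray} gives $(N_i,\partial N_i)\approx(\mb{R}^n_+,\mb{R}^{n-1})$, so $N_i$ is a proper, tame half space of each ambient manifold and $\partial N_i\subset C_i$.

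With these choices I would form
\[
(M_1,r_1)\natural(M_2,r_2)=(M_1-\intr N_1)\cup_f(M_2-\intr N_2),\qquad f:\partial N_1\to\partial N_2,
\]
and set $\hat A_i=A_i-\intr N_i$, $\hat B_i=B_i-\intr N_i$, $\hat C_i=C_i-\intr N_i$. As $N_i\subset C_i=A_i\cap B_i$, deleting $\intr N_i$ deletes it from both $A_i$ and $B_i$, so $M_i-\intr N_i=\hat A_i\cup\hat B_i$ with $\hat A_i\cap\hat B_i=\hat C_i$. The gluing seam $\partial N_i$ lies in $C_i$, hence in $\hat A_i\cap\hat B_i$, so $f$ restricts to the identifications gluing $\hat A_1$ to $\hat A_2$, $\hat B_1$ to $\hat B_2$, and $\hat C_1$ to $\hat C_2$. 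A direct set-theoretic check then gives $(M_1,r_1)\natural(M_2,r_2)=A\cup_C B$ with
\[
A=\hat A_1\cup_f\hat A_2,\quad B=\hat B_1\cup_f\hat B_2,\quad C=\hat C_1\cup_f\hat C_2,\quad A\cap B=C,
\]
and since $N_i$ is a regular neighborhood of $r_i$ in $A_i$ (resp. $B_i$, $C_i$) with the same frontier and gluing map, these are exactly $A=(A_1,r_1)\natural(A_2,r_2)$, $B=(B_1,r_1)\natural(B_2,r_2)$, and $C=(C_1,r_1)\natural(C_2,r_2)$.

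Finally I would identify each piece with $\mb{R}^n$. Each $A_i\approx\mb{R}^n$ is contractible, oriented, PL, open, and semistable, so by the corollary to Lemma \ref{indep of rays} the sum $(A_1,r_1)\natural(A_2,r_2)$ does not depend on the proper PL rays used and therefore coincides with the sum along standard rays; Note \ref{csi of nspaces} then gives $A\approx\mb{R}^n$, and the identical argument yields $B,C\approx\mb{R}^n$. This exhibits $(M_1,r_1)\natural(M_2,r_2)=A\cup_C B$ with $A,B,C\approx\mb{R}^n$, as desired.

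I expect the main obstacle to be the second step's bookkeeping rather than any deep topology: one must verify carefully that removing $\intr N_i$ from $M_i$ respects the decomposition (which is where $N_i\subset A_i\cap B_i$ is essential), that the single seam $\partial N_i\subset\hat C_i$ forces $f$ to split into the three sub-gluings, and that the intersection of the two reassembled halves is exactly $C$ and acquires no spurious identifications along the seam. The one genuinely structural point to state cleanly is the transfer of the regular-neighborhood property across the open inclusions $C_i\hookrightarrow A_i,B_i,M_i$, since it is precisely what allows a single $N_i$ to serve as the neighborhood data for all four connected sums at infinity simultaneously.
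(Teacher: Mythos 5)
Your proposal has a genuine gap at its very first step: you treat the rays $r_i\subset C_i$ that are proper in both $A_i$ and $B_i$ as if they were given (``the hypotheses already force $r_i$ to be proper in $M_i$\ldots contradicting properness of $r_i$ there''), but the corollary supplies no rays at all --- its only data are the splittings $M_i=A_i\cup_{C_i}B_i$. The existence of a ray lying in $C_i$ that is \emph{simultaneously} proper in $A_i$ and in $B_i$ is not automatic and cannot simply be posited: a ray of $C_i$ (proper in $C_i$) can fail badly to be proper in $A_i$, since $C_i$ is merely an open subset of $A_i$ and the ray may accumulate on a frontier point of $C_i$, hence remain inside a compact subset of $A_i$. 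This existence statement is precisely where the paper does its real work: the subsection's unlabeled lemma (proved just before Proposition \ref{CSI of splitters}) shows that any splitting $M=A\cup_C B$ with $A,B,C\approx\mathbb{R}^n$ admits a ray $r\subset C$ proper in both $A$ and $B$, via a differential-topology argument --- a smooth retraction $h:\cl_M(C)\to\alpha$ onto an arc joining the two frontier components, Sard's theorem to choose a regular value, the Jordan--Brouwer even-intersection-count proposition to conclude that the relevant component $V$ of the preimage is noncompact and properly embedded in $C$, and a final argument (using $T'=h^{-1}([-1+\epsilon_1,1-\epsilon_2])$) that a proper ray in $V$ is proper in both $A$ and $B$. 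The paper itself flags this as the crux: ``The work comes in showing the existence of the desired ray, regular neighborhood pair.'' Without this lemma or a substitute for it, your construction never gets off the ground.

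Everything after that point in your proposal is sound and is essentially the paper's own route: the paper derives this corollary immediately from Proposition \ref{CSI of splitters}, whose proof is exactly your bookkeeping step (delete $\intr N_i$ with $N_i\subset C_i$ a regular neighborhood of $r_i$ in $A_i$, $B_i$, and $C_i$ at once, glue along $\partial N_i$, and observe $A=(A_1,r_1)\natural(A_2,r_2)$, etc.), combined with Lemma \ref{indep of rays} (semistability makes the sum ray-independent, which is what lets one write $M_1\natural M_2$ without reference to the special rays) and Note \ref{csi of nspaces} (a CSI of two $\mathbb{R}^n$'s is $\mathbb{R}^n$). Your explicit observation that a single $N_i\subset C_i$ serves as the regular-neighborhood data for all four ambient manifolds is a nice clarification of what the paper compresses into the phrase ``a ($A_i$, $B_i$, and $C_i$) regular neighborhood.'' So the repair is localized: prove, or invoke, the ray-existence lemma; with it in hand, the rest of your argument completes correctly.
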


\begin{proof}[Proof of Proposition \ref{CSI of splitters}] For $i=1,2,$ let $N_i$ be a ($A_i, B_i,$ and $C_i$) regular neighborhood of $r_i$. For $X_i = M_i,A_i,B_i,C_i$, let $\hat{X}_i = X_i-\intr(N_i).$ Given an orientation reversing homeomorphism $f:\partial N_1 \rightarrow \partial N_2$ we have $(M_1,r_1) \natural (M_2,r_2)=\hat{M}_1 \cup_{f} \hat{M}_2.$ Let $A=\hat{A}_1 \cup_{f} \hat{A}_2$ and observe that $A=(A_1,r_1) \natural (A_2,r_2).$ Likewise we let ${B=\hat{B}_1 \cup_f \hat{B}_2=(B_1,r_1) \natural (B_2,r_2)}$ and $C=\hat{C}_1 \cup_f \hat{C}_2=(C_1,r_1) \natural (C_2,r_2)$ and we see that $(M_1,r_1) \natural (M_2,r_2) = A \cup_C B$. From Note \ref{csi of nspaces} we know each of $A,B,$ and $C$ are $\mb{R}^n$'s as they are each the connected sum at infinity of two $\mb{R}^n$'s. %See Figure \ref{nat sum splits}.
\end{proof}

\begin{prop} For $i=1,2,...,$ let $M_i$ be a contractible, open $n$-manifold ($n\geq4$) such that $M_i=A_i \cup_{C_i} B_i$ with $A_i,B_i,C_i \approx \mathbb{R}^n$ for all $i.$ Further let $r_{i,L}$ and $r_{i,R}$ be disjoint rays in $C_i$ that are also proper in both $A_i$ and $B_i.$ Then
	\[M:=\natural_{i=1}^\infty (M_i,r_{i,L},r_{i,R}) \approx A\cup_C B \]
with $A,B,C \approx \mb{R}^n.$
\label{infinite csi splits}
\end{prop}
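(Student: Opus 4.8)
The plan is to mirror the two–summand argument of Proposition \ref{CSI of splitters}, carried out over the infinite indexing set, and then to reduce the final identification to the single new fact that an infinite connected sum at infinity of copies of $\mathbb{R}^n$ is again $\mathbb{R}^n$. Throughout I work with the explicit neighborhoods defining the sum, so that no appeal to well-definedness of $\natural$ is needed.

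First I would arrange the neighborhoods compatibly. For each $i$ and each $x\in\{L,R\}$ the ray $r_{i,x}$ lies in $C_i$, and I may assume it lies in the interior $\intr_{M_i} C_i$ (as is the case for the rays produced in the preceding lemma). Since $C_i$ is a codimension-$0$ submanifold of $M_i$ contained in both $A_i$ and $B_i$, a sufficiently thin regular neighborhood $N_{i,x}$ of $r_{i,x}$ is contained in $\intr_{M_i} C_i$ and is therefore simultaneously a regular neighborhood of $r_{i,x}$ in each of $M_i$, $A_i$, $B_i$, and $C_i$ — exactly the $(A_i,B_i,C_i)$ regular neighborhood used in Proposition \ref{CSI of splitters}. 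Shrinking if necessary I arrange $N_{i,L}\cap N_{i,R}=\emptyset$. By Proposition \ref{nhd of proper ray} each $N_{i,x}$ is a tame proper half space, and each frontier $\partial N_{i,x}\approx\mathbb{R}^{n-1}$ lies entirely inside $\intr_{M_i} C_i$.

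Next I would assemble the sum. Writing $\check X_i$ for $X_i$ with the interiors of the relevant neighborhoods deleted (for $X=M,A,B,C$; only $N_{1,R}$ from the first summand, both $N_{i,L},N_{i,R}$ thereafter), the gluing maps $\phi_i:\partial N_{i,R}\to\partial N_{i+1,L}$ identify frontiers that sit inside $\intr C_i$ and $\intr C_{i+1}$. Consequently each gluing identifies only $C$-frontier with $C$-frontier, so the decompositions $\check M_i=\check A_i\cup_{\check C_i}\check B_i$ are respected by the identifications. Setting
\[
A=\natural_{i=1}^\infty (A_i,r_{i,L},r_{i,R}),\quad B=\natural_{i=1}^\infty (B_i,r_{i,L},r_{i,R}),\quad C=\natural_{i=1}^\infty (C_i,r_{i,L},r_{i,R}),
\]
the same bookkeeping as in the two-summand case gives $A\cup B=M$ and $A\cap B=C$, i.e. $M=A\cup_C B$.

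Finally, since each $A_i,B_i,C_i\approx\mathbb{R}^n$, each of $A,B,C$ is an infinite connected sum at infinity of copies of $\mathbb{R}^n$, and it remains to verify that such a sum is homeomorphic to $\mathbb{R}^n$. This is the only step that genuinely exceeds Proposition \ref{CSI of splitters}, since Note \ref{csi of nspaces} is stated only for a single $\natural$; I expect it to be the main obstacle. The route I would take is this: the finite partial sums $\natural_{i=1}^k\mathbb{R}^n$ are $\mathbb{R}^n$ by an immediate induction from Note \ref{csi of nspaces}, and the infinite sum is the union of a nested, cofinal family of such partial sums, each obtained from the previous by attaching one more copy of $\mathbb{R}^n$ along a tame half space, with the attaching frontiers $\partial N_{k,R}$ escaping the single end. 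A Brown-type monotone-union argument — equivalently the infinite-summand CSI calculus of \cite{CKS} that already underlies our definition of $\natural$ and Proposition \ref{open minus half sp} — then identifies the union with $\mathbb{R}^n$. Applying this to each of $A$, $B$, and $C$ yields $M=A\cup_C B$ with $A,B,C\approx\mathbb{R}^n$, as desired.
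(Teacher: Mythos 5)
Your proposal is correct and follows essentially the same route as the paper's proof: choose regular neighborhoods of the rays lying in the $C_i$'s so the gluings respect the decompositions, obtain $M=A\cup_C B$ with $A,B,C$ infinite CSI's of $\mathbb{R}^n$'s, and identify each with $\mathbb{R}^n$ by exhibiting it as an ascending union of finite partial sums (each $\approx\mathbb{R}^n$ via Note \ref{csi of nspaces} and Proposition \ref{open minus half sp}) and applying Brown's monotone union theorem \cite{Bro}. The only cosmetic difference is that the paper writes down the nested sets $\check{A}_j,\check{B}_j,\check{C}_j$ explicitly, whereas you describe them; the mathematical content is identical.
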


\begin{cor} For $i=1,2,...,$ let $M_i$ be a contractible, semistable, open $n$-manifold ($n\geq4$). If $M_i=A_i \cup_{C_i} B_i$ with $A_i,B_i,C_i \approx \mathbb{R}^n$ for all $i$ then
	\[M:=\natural_{i=1}^\infty M_i \approx A\cup_C B \]
with $A,B,C \approx \mb{R}^n.$
\end{cor}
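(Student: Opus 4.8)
The plan is to deduce this Corollary directly from Proposition~\ref{infinite csi splits}, using semistability to remove the dependence on a prescribed choice of rays. The only gap between the two statements is that here we are handed the \emph{abstract} infinite sum $\natural_{i=1}^\infty M_i$ (well defined precisely because each $M_i$ is 1-ended and semistable, so by the Corollary to Lemma~\ref{indep of rays} the construction does not depend on ray choices), whereas Proposition~\ref{infinite csi splits} requires the summands to be summed along rays that lie in the $C_i$ and are proper in both halves $A_i$ and $B_i$. So I would manufacture such geometrically special rays, apply Proposition~\ref{infinite csi splits} to obtain a splitting, and then invoke invariance to identify that splitting with the given abstract sum.

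First I would produce, for each $i$, a pair of \emph{disjoint} rays $r_{i,L}, r_{i,R} \subset C_i$ that are proper in both $A_i$ and $B_i$. The Lemma preceding Proposition~\ref{CSI of splitters} already supplies one such ray; to get two disjoint ones I would rerun its construction with two distinct regular values $v_L < v_R$ of the smoothed retraction $h\colon \overline{C_i}\to\alpha$. The corresponding level-set components are disjoint (they sit over different values of $h$), each is a noncompact properly embedded codimension-one submanifold of $C_i$ by the parity argument, and each carries a proper ray. The ``it eventually leaves $h^{-1}([-1+\epsilon_1,\,1-\epsilon_2])$'' argument from that Lemma then shows both rays are proper in $A_i$ and in $B_i$.

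With these rays in hand, Proposition~\ref{infinite csi splits} gives
\[
\natural_{i=1}^\infty (M_i,r_{i,L},r_{i,R}) \approx A \cup_C B, \qquad A,B,C \approx \mb{R}^n,
\]
where $A=\natural A_i$, $B=\natural B_i$, and $C=\natural C_i$ are each infinite connected sums at infinity of Euclidean spaces (and hence copies of $\mb{R}^n$ in the spirit of Note~\ref{csi of nspaces} and Proposition~\ref{open minus half sp}). Finally, since each $M_i$ is 1-ended and semistable and $n\geq 4$, the Corollary to Lemma~\ref{indep of rays} guarantees that the abstract sum $\natural_{i=1}^\infty M_i$ is independent of all ray choices, so it is homeomorphic to $\natural_{i=1}^\infty (M_i,r_{i,L},r_{i,R})$; composing the two homeomorphisms yields the claimed splitting.

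The substantive mathematics has already been discharged upstream (in Proposition~\ref{CSI of splitters}, the ray-existence Lemma, and Proposition~\ref{infinite csi splits}), so the work here is verificatory rather than deep. The main point requiring care is the swap in the last step: the ray-independence result is stated for 1-ended semistable manifolds and for the \emph{infinite} sum, so I must confirm these hypotheses genuinely apply to each summand $M_i$ (they are part of the Corollary's hypotheses) before replacing arbitrary defining rays with the special rays produced above. The only mildly technical step is the manufacture of two disjoint special rays per summand, which is handled by the two-regular-value argument sketched above.
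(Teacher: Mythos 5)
Your proposal is correct and follows essentially the same route the paper intends: the statement is presented as an immediate corollary of Proposition~\ref{infinite csi splits}, obtained by combining the ray-existence lemma (rays in $C_i$ proper in both $A_i$ and $B_i$) with the ray-independence of the infinite CSI for $1$-ended semistable manifolds (the corollary to Lemma~\ref{indep of rays}). Your two-regular-value construction correctly supplies the pair of \emph{disjoint} such rays per summand that Proposition~\ref{infinite csi splits} requires but the paper's lemma (which produces only one ray) leaves implicit.
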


\begin{proof}[Proof of Proposition \ref{infinite csi splits}]
For $i=1,2,...,$ choose disjoint $A_i, B_i,$ and $C_i$ regular neighborhoods $N_{i,L},$ $N_{i,R}$  of $r_{i,L}$ and $r_{1,R},$ respectively. 
For $j=1,2,...,$ let 
\[ \check{C}_j= (C_1- N_{1R})\cup(C_2-[\intr N_{2L}\cup N_{2R}]) \cup...\cup(C_j-[\intr N_{j,L}\cup N_{j,R}]).\]

Then $\check{C}_j=(\natural _{i=1}^j (C_i,r_i)) -N_{j,R} \approx \mb{R}^n - \mb{R}^n_+ \approx \mb{R}^n$ and $\check{C}_j \subset \check{C}_{j+1}.$ 
Let $C=\cup \check{C}_j$, so that $C$ is an ascending union of $\mathbb{R}^n$'s and thus is itself an $\mathbb{R}^n$ \cite{Bro}.
Let	
	\[ \check{A}_j= (A_1-\intr N_{1R})\cup(A_2-[\intr N_{2L}\cup N_{2R}]) \cup...\cup(A_j-[\intr N_{j,L}\cup N_{j,R}]),\] 
		\[ \check{B}_j= (B_1-\intr N_{1R})\cup(B_2-[\intr N_{2L}\cup N_{2R}]) \cup...\cup(B_j-[\intr N_{j,L}\cup N_{j,R}]),\]
$A= \cup \check{A}_j,$ and $B= \cup \check{B}_j$ so that $A,B \approx \mb{R}^n$ and $M=A\cup_C B$. 
\end{proof}

We have demonstrated that any CSI of interiors of Jester's manifolds splits and thus have demonstrated \\

\noindent
\textbf{Theorem \ref{uncountable open 4-splitters}.} \emph{There exists an uncountable collection of contractible open 4-manifolds which split as $\mathbb{R}^4 \cup_{\mathbb{R}^4} \mathbb{R}^4.$} \\

A result of Ancel and Siebenman states that a Davis manifold generated by $C$ is homeomorphic to the interior of an alternating boundary connected sum $\intr(C\bdrysum -C \bdrysum C \bdrysum -C \bdrysum ...). $ Here $-C$ is a copy of $C$ with the opposite orientation \cite{Gui}. We have now proved

\begin{cor}
There exists (non-$\mathbb{R}^4$) 4-dimensional Davis manifold splitters.
\end{cor}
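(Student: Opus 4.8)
The plan is to run the Davis construction with one of our Jester's manifolds as the seed, so that the splitting of the seed survives the passage to the infinite sum. First I would fix a Jester's manifold $C$: a compact contractible $4$-manifold which, by Theorem~\ref{infinite 4-splitters}, is a closed splitter $C \approx \mathbb{B}^4 \cup_{\mathbb{B}^4} \mathbb{B}^4$ and whose boundary has nontrivial, indecomposable, noncyclic fundamental group. Reversing orientation does not disturb the splitting property, so $-C$ is a closed splitter as well. By Proposition~\ref{ints of splitters split}, both $\intr C$ and $\intr(-C)$ are open splitters, each homeomorphic to $\mathbb{R}^4 \cup_{\mathbb{R}^4} \mathbb{R}^4$; being interiors of compact manifolds with connected boundary, they are $1$-ended and stable, hence semistable.

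By the Ancel--Siebenmann description quoted above, the Davis manifold generated by $C$ is $\intr(C \bdrysum -C \bdrysum C \bdrysum -C \bdrysum \cdots)$. Since the connected sum at infinity of the interiors of compact manifolds with connected boundary is homeomorphic to the interior of their boundary connected sum, this Davis manifold is homeomorphic to the infinite CSI
\[
\intr C \,\natural\, \intr(-C) \,\natural\, \intr C \,\natural\, \cdots .
\]
Now I would invoke the Corollary to Proposition~\ref{infinite csi splits}: each summand is a contractible, semistable, open $4$-manifold splitting as $\mathbb{R}^4 \cup_{\mathbb{R}^4} \mathbb{R}^4$, so the infinite CSI itself splits as $\mathbb{R}^4 \cup_{\mathbb{R}^4} \mathbb{R}^4$. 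Concretely, one chooses the rays defining each CSI to lie in the intersection piece $C_i$ of that summand's splitting, which is possible by the Lemma providing a ray in $C_i$ that is proper in both $A_i$ and $B_i$; the splitting then assembles as in~(\ref{CSI splitting}).

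It remains to confirm that these Davis manifolds are not homeomorphic to $\mathbb{R}^4$. Here the computation carried out in the proof of Theorem~\ref{uncountable CSI's} applies: $\operatorname{pro}\text{-}\pi_1$ at infinity of the infinite CSI is pro-isomorphic to the inverse sequence $\{G_j,\varphi_j\}$ with $G_j = \pi_1(\partial C) \ast \pi_1(\partial(-C)) \ast \cdots$ ($j$ factors) and $\varphi_j$ the obvious projection killing the last factor. Because $\pi_1(\partial C)$ is nontrivial, every bonding map $G_k \to G_j$ is a nontrivial epimorphism, so the pro-group is not pro-trivial; the Davis manifold is therefore not simply connected at infinity and hence is not homeomorphic to $\mathbb{R}^4$.

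The main obstacle is the reconciliation in the second paragraph: I must check that the CSI produced by Proposition~\ref{infinite csi splits} --- whose defining rays are constrained to lie in the splitting intersections $C_i$ --- is genuinely the same open manifold as the Ancel--Siebenmann sum, whose gluing tubes are chosen without reference to any splitting. This is exactly where semistability earns its keep, since, via the Corollary to Lemma~\ref{indep of rays}, it guarantees that the CSI is independent of the choices of rays and regular neighborhoods, allowing the two ray systems to be identified. By comparison the non-$\mathbb{R}^4$ claim, though it is the conceptual payoff, is routine once the fundamental group at infinity has been read off.
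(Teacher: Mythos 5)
Your proposal is correct and follows essentially the same route as the paper: identify the Davis manifold generated by a Jester's manifold with the infinite CSI of interiors via the Ancel--Siebenmann result, apply the corollary to Proposition~\ref{infinite csi splits} (sums of splitters split) to get the $\mathbb{R}^4 \cup_{\mathbb{R}^4} \mathbb{R}^4$ decomposition, and rule out $\mathbb{R}^4$ via the nontrivial pro-$\pi_1$ at infinity computed in Theorem~\ref{uncountable CSI's}. The paper leaves most of this chain implicit (``We have now proved''), so your write-up simply makes explicit the same argument, including the ray-independence point that semistability supplies.
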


\end{document}